\documentclass{amsart}

\usepackage{amssymb}
\usepackage{amscd}
\usepackage{amsmath}
\usepackage{url}
\usepackage{xypic}

\begin{document}

\newtheorem{thm}[equation]{Theorem}
\newtheorem{prop}[equation]{Proposition}
\newtheorem{lemma}[equation]{Lemma}
\newtheorem{cor}[equation]{Corollary}
\newtheorem{conj}[equation]{Conjecture}
\theoremstyle{definition}
\newtheorem{definition}[equation]{Definition}
\newtheorem{remark}[equation]{Remark}
\newtheorem{example}[equation]{Example}
\numberwithin{equation}{section} 

\title[Special elements and Lichtenbaum-Gross]{On special elements \\
in higher algebraic $K$-theory \\
and the Lichtenbaum-Gross Conjecture}

\author{David Burns}
\address{King's College London\\Dept. of Mathematics\\London WC2R 2LS\\United Kingdom}

\author{Rob de Jeu}
\address{Faculteit der Exacte Wetenschappen\\Afdeling Wiskunde\\VU University Amsterdam\\De Boelelaan 1081a\\1081 HV Amsterdam\\The Netherlands}

\author{Herbert Gangl}
\address{Dept. of Math. Sciences\\University of Durham\\Durham DH1 3LE\\United Kingdom}

\begin{abstract}
We conjecture the existence of special elements in odd degree higher
algebraic $K$-groups of number fields that are related in a precise way to the values
at strictly negative integers of the derivatives of Artin
$L$-functions of finite dimensional complex representations. We prove this conjecture in certain important cases and also provide other
 evidence (both theoretical and numerical) in its support.
\end{abstract}

\subjclass[2000]{Primary: 11R70, 19F27; secondary: 11R42}

\keywords{number field, algebraic $ K $-theory, regulator, Artin $ L $-function}

\maketitle

\def\Spec{\textup{Spec}}
\def\cok{\textup{cok}}

\def\C{\mathbb C}
\def\F{\mathbb F}
\def\P{\mathbb P}
\def\Q{\mathbb Q}
\def\R{\mathbb R}
\def\Z{\mathbb Z}

\def\CB{\mathcal B}
\def\CO{\mathcal O}

\def\emb#1{\Sigma_{#1}}
\def\embsp#1#2{\Sigma_{#2}^{#1}}
\def\Gal{\textup{Gal}}
\def\Hom{\textup{Hom}}
\def\I{(1+I)^\times}
\def\image{\textup{im}}
\def\ind{\textup{ind}}
\def\loc{\textup{loc}}
\def\NN{{\mathcal N}}
\def\ol{\overline}
\def\reg{\textup{reg}}
\def\Regr{\textup{Reg}_r}
\def\Tor{\textup{Tor}}

\def\chic#1#2{c_{#2}^{#1}}
\def\wr{w_r(\chi)}

\def\GL{GL_2(\mathbb{F}_3)}
\def\SL{SL_2(\mathbb{F}_3)}
\def\amG{\SL*\Z/4\Z}

\def\approx{\;{\buildrel . \over =}\;}

\section *{Introduction}

An important conjecture of Gross asserts, roughly speaking, that the leading term at any strictly negative integer of the Artin $L$-function of a finite dimensional complex representation should be equal, to within an undetermined algebraic factor, to a regulator constructed from elements of the appropriate odd degree higher algebraic $K$-group. (Gross's Conjecture was first formulated in the late 1970s as a natural analogue of the seminal conjecture of Stark concerning the leading terms at zero of Artin $L$-functions but was only recently published in \cite{gross} and can by now be seen as a special case of the natural equivariant refinement of Beilinson's general conjectures on the leading terms of $L$-functions.) It is well known that providing an explicit upper bound on the (absolute norm of the) denominator of the undetermined algebraic factor in Gross's Conjecture would make it much easier to obtain numerical evidence for the conjecture - see, for example, the discussion of Dummit in \cite[\S14]{dumm} regarding problems that arise when conducting numerical investigations of Stark's Conjecture. Perhaps more importantly, it is  also likely that such a bound could be used to give some much needed insight into arithmetic properties of any possible non-abelian analogues of the cyclotomic and elliptic elements in higher algebraic $K$-theory that have been constructed by Beilinson, Deligne and Soul\'e. However, apart from an important, but rather inexplicit, modification of the related Lichtenbaum-Gross Conjecture that was formulated by Chinburg, Kolster, Pappas and Snaith in \cite{ckps}, and a similarly inexplicit refinement of the Lichtenbaum-Gross Conjecture that we recently learnt has been formulated by Nickel in \cite{Nickel}, the present authors are not aware of any other predictions, let alone results, concerning this problem. 

In the current article we make a first step in this direction by investigating the possible existence of elements in odd degree higher
algebraic $K$-groups of number fields that are related in a very explicit way to the values at strictly negative integers of the first derivatives of Artin $L$-functions. By developing a suggestion of the first author in \cite[Rem. 5.1.5]{ltav}, which
was itself partly motivated by the results of Stark, Tate and Chinburg that are discussed by Tate in \cite[Chap. III]{tatebook}, we
formulate a precise conjecture in this regard (see Conjecture~\ref{chinlike} and Propositions ~\ref{extraprop}(i) and \ref{equivalences}). It is straightforward to see that this conjecture refines Gross's Conjecture and in Theorem \ref{mainres} we prove in addition that, in certain cases, it is implied by the modified Lichtenbaum-Gross Conjecture of \cite[Conj. 6.12]{ckps} and also that the elements it predicts should encode explicit information about the structure of even degree higher algebraic $K$-groups in a way that is strikingly
parallel to the way in which cyclotomic units are known to encode
information about the structure of ideal class groups (see Remark~\ref{rubin remark}). Via this
connection we are able to prove many cases of our conjecture for
characters which factor through Abelian extensions of $\Q$. For
 some cases of our conjecture that are not related to the Lichtenbaum-Gross Conjecture
we provide (in \S\ref{ne}) supporting numerical evidence for
various types of representations including certain dihedral and tetrahedral representations and also those studied by Tate and
Buhler \cite{buhler} and Chinburg \cite{chin}. As a preliminary step
to describing this evidence, which we believe may itself be of some
independent interest, in Theorem \ref{Bprop} we make precise the relation between a version
of the (second) Bloch group and $ K_3 $ of a field, and, if the
field is a number field, the Beilinson regulator map.

The first author would like to thank Dick Gross for important encouragement at an early stage of this project. It is also a pleasure to thank Andrew Booker and Xavier-Fran\c cois Roblot for their help with aspects of the numerical computations that are described in~\S\ref{ne}.

\section{Statement of the conjecture}

\subsection{}Throughout this article we use the following general notation.

We fix an algebraic closure $\Q^c$ of $\Q$ and for any Galois extension of fields $F/E$ we set $G_{F/E} := \Gal(F/E)$. For each non-zero integer $a$ and number field $F$ we write $\mu_a(F)$ for the finite module $H^0(G_{\Q^c/F},\Q/\Z(a))$ where
$\Q/\Z(a)$ denotes the group $\Q/\Z$ regarded as a $G_{\Q^c/\Q}$-module by setting $g(x) := \chi_{\rm cyc}(g)^ax$ for all $g \in G_{\Q^c/\Q}$ and $x \in \Q/\Z$ where $\chi_{\rm cyc}$ is the cyclotomic character. We also set
$w_a(F) := |\mu_a(F)|$ and $\R(a) := (2\pi i)^a\R$.

For any abelian group $A$ we write $A_{\rm tor}$ for its torsion subgroup. Unadorned tensor products are to be regarded as taken in the category of abelian groups.

\subsection{}\label{soc sect}
To state our conjecture we fix a finite Galois extension of number fields $F/k$ and a finite set of
places $S$ of $k$ containing the set $S_\infty$ of all archimedean places. We also fix an irreducible
finite dimensional complex character $\chi$ of $G := G_{F/k}$ and a subfield $E$ of $\C$
that is both Galois and of finite degree over $\Q$ and over which $\chi$ can be
realised. We write $\check \chi$ for the contragredient of $\chi$ and $e_\chi$ for the primitive central
idempotent $\chi(1)|G|^{-1}\sum_{g\in G}\check\chi(g)g$ of $E[G]$. We set $\Gamma := G_{E/\Q}$
and write $\mathcal{O}$ for the ring of integers in $E$. For any $G$-module $M$ we write the
natural semi-linear action of $\Gamma$ on $E\otimes M$ as $(\gamma,x) \mapsto x^\gamma$
(so $x^\gamma = \gamma(e)\otimes m$ if $x = e\otimes m$ with $e \in E$ and $m \in M$).

We fix a strictly negative integer $r$ and for
any field $L$ we write the higher algebraic $K$-group $K_{1-2r}(L)$ additively. We
consider the composite homomorphism
\begin{equation}\label{reg def}
   \reg_{1-r}: K_{1-2r}(\C) \to H_{\mathcal D}^1(\textup{Spec}(\C) , \R(1-r)) = \C/\R(1-r) 
\buildrel{\simeq}\over{\rightarrow} \R(-r)
\end{equation}
where the first arrow is the Beilinson regulator map and the second
is the isomorphism induced by the decomposition $\C = \R(1-r)\oplus \R(-r)$.
 We let $\tau$ be the non-trivial element in $G_{\C/\R}$ and use the same symbol to denote the induced involution on $ K_{1-2r}(\C)$.
 We recall that $\reg_{1-r}\circ\tau = (-1)^r \reg_{1-r}$. We note that the index $ 1-r $ corresponds to the only Adams weight in algebraic $ K $-theory
on which $ \reg_{1-r} $ is non-trivial.  In particular $ \reg_{1-r} $ is trivial on the image
in $K_{1-2r}(\C)$ of the Milnor $K$-group $K^M_{1-2r}(\C)$ since this has weight $ 1-2r $.

We regard the set $\emb{F}$ of embeddings $F\to \C$ as a left
$G\times G_{\C/\R}$-module by setting  $(g\times \omega)(\sigma) = \omega\circ \sigma\circ g^{-1}$
for each $g\in G, \omega\in G_{\C/\R}$ and $\sigma\in \emb{F}$. For each
$\sigma\in \emb{F}$ we write
${\rm reg}_{1-r,\sigma}: \mathcal{O}\otimes K_{1-2r}(F) \to \C$ for the
$\mathcal{O}$-linear map sending each $e\otimes x$ with $e\in\mathcal{O}$ and
$x\in K_{1-2r}(F)$ to $e\cdot{\rm reg}_{1-r}(\sigma(x))$, where $\sigma$
also denotes the induced homomorphism $K_{1-2r}(F) \to K_{1-2r}(\C)$.
We write $\tau_\sigma$ for the generator of the decomposition
subgroup $D_\sigma$ in $G$ of the place of $F$ that corresponds to $\sigma$.

For each $\gamma$ in $\Gamma$ we write $L'_S(r,\chi^\gamma)$ for the value at $s = r$ of
the first derivative of the $S$-truncated Artin $L$-function
$L_S(s,\chi^\gamma)$ of $\chi^\gamma$. In the case $S = S_\infty$
we often abbreviate $L_S(s,\chi^\gamma)$ and $L'_S(r,\chi^\gamma)$ to $L(s,\chi^\gamma)$ and $L'(r,\chi^\gamma)$ respectively.
Finally we set $w_r(\chi):= w_{1-r}(F^{{\rm ker}(\chi^\gamma)})^{\chi^\gamma(1)}$ for any
$\gamma\in \Gamma$, and we note that this number is indeed independent of the choice of $\gamma$.

We can now state the central conjecture of this article.

\begin{conj}\label{chinlike}
Assume that $L(r,\check\chi) =0$. For each $\sigma$ in $\emb{F}$ there exists an element
$\epsilon_\sigma(\chi,S)$ of $\mathcal{O}\otimes K_{1-2r}(F)$ with the following property: for all
$\sigma'\in \emb{F}$ and all $\gamma\in \Gamma$ one has
\begin{equation}\label{conj eq}
 (2\pi i)^r{\reg}_{1-r,\sigma'}(\epsilon_\sigma(\chi,S)^\gamma)
=
 \wr \gamma(\chic{\chi}{\sigma',\sigma}) L'_S(r,\check\chi^\gamma)
\end{equation}
with
\[ \chic{\chi}{\sigma',\sigma} := \begin{cases} \check\chi(g) +(-1)^{r}\check\chi(g\tau_\sigma) &\text{if $\sigma(k) \subset \R$ and
$\sigma' = g(\sigma)$ for some $g \in G$},\\
\check\chi(g) &\text{if $\sigma(k) \nsubseteq \R$ and $\sigma' =
g(\sigma)$ for some $g \in G$},\\
(-1)^r\check\chi(g) &\text{if $\sigma(k) \nsubseteq \R$ and $\sigma'
= \tau\circ g(\sigma)$ for some $g \in G$},\\
0 &\text{otherwise.}\end{cases}
\]
\end{conj}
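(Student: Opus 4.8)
Since \eqref{conj eq} is the assertion of a conjecture rather than of a theorem, the plan is not to prove it unconditionally but to describe the three things that can realistically be done: to verify that the conjecture is internally consistent, so that an element with the stated property can exist at all; to show that it refines Gross's Conjecture; and to establish it in those cases where explicit elements of $K_{1-2r}(F)$ are at hand.

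The first step consists of a sequence of formal compatibilities for the right hand side of \eqref{conj eq}. For fixed $\sigma$ the family $(\chic{\chi}{\sigma',\sigma})_{\sigma'\in\emb{F}}$ should be recognised as the evaluation at $\sigma$ of a natural $\check\chi$-twist of the permutation representation of $G\times G_{\C/\R}$ on $\emb{F}$, and one must check: (a) replacing $\sigma$ by $h(\sigma)$ for $h\in G$ multiplies the whole system by the scalar that realises the $G$-equivariance the assignment $\sigma\mapsto\epsilon_\sigma(\chi,S)$ is forced to have; (b) the two cases indexed by $g(\sigma)$ and by $\tau\circ g(\sigma)$ are permuted consistently with the relation $\reg_{1-r}\circ\tau=(-1)^r\reg_{1-r}$ recalled after \eqref{reg def} — this is precisely what dictates the signs $(-1)^r$ and the combination $\check\chi(g)+(-1)^r\check\chi(g\tau_\sigma)$ occurring at real places; (c) the semilinear action $x\mapsto x^\gamma$ on $\mathcal{O}\otimes K_{1-2r}(F)$ is matched by the action of $\Gamma$ on the $L$-values and on the character values, so that \eqref{conj eq} for a single $\gamma$ is equivalent to the full system; and (d) the resulting statement does not depend on the auxiliary field $E$. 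Granting these, the injectivity of the Beilinson regulator on $K_{1-2r}(F)\otimes\R$ (Borel's theorem) shows that the $e_\chi$-component of $\epsilon_\sigma(\chi,S)$ is uniquely determined by \eqref{conj eq}, so that the content of the conjecture is exactly that this component is defined over $E$ and is $\wr$-integral, i.e. lifts to $\mathcal{O}\otimes K_{1-2r}(F)$ with the prescribed denominator.

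Summing \eqref{conj eq} over a set of representatives for the $G\times G_{\C/\R}$-orbits on $\emb{F}$ and passing to the $\chi$-component (where the character matrix collapses to a nonzero scalar) produces an identity $\Regr(\langle\epsilon_\sigma(\chi,S)\rangle)\approx L'_S(r,\check\chi)$, which is exactly Gross's Conjecture for $(\chi,r)$; so the conjecture does refine it. To prove the full statement when $\chi$ factors through an abelian extension of $\Q$, I would reduce, by the inductivity of Artin $L$-functions, to the case $F=\Q(\zeta_n)$, $k=\Q$ with $\chi$ a Dirichlet character; take for $\epsilon_\sigma(\chi,S)$ the Beilinson--Deligne--Soul\'e cyclotomic element of $K_{1-2r}(\Q(\zeta_n))$ attached to $1-\zeta_n$; invoke the explicit computation of its Beilinson regulator (in the form due to Beilinson, or to Huber--Wildeshaus, or to Beilinson--Deligne) to match the left hand side of \eqref{conj eq} with the values $L'_S(r,\check\chi^\gamma)$ up to the prescribed character factors; and finally pin down the precise denominator $\wr=w_{1-r}(\Q(\zeta_n)^{\ker(\chi)})^{\chi(1)}$ either from the \'etale Chern class description of the torsion in $K_{1-2r}(\Q(\zeta_n))$ together with the denominator of the cyclotomic element, or — as carried out in Theorem~\ref{mainres} — from the modified Lichtenbaum--Gross Conjecture of \cite{ckps}, which fixes the relevant Euler characteristic and hence the integral structure.

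The main obstacle is precisely this last, integral, point. For a general, and in particular a non-abelian, character $\chi$ there is at present no construction of explicit elements of $K_{1-2r}(F)$ — Borel's theorem supplies only the rank — so one cannot simply exhibit $\epsilon_\sigma(\chi,S)$ and check \eqref{conj eq} directly; and even when suitable elements are available, isolating the exact factor $\wr$, rather than merely some element of $E^\times$, requires genuine arithmetic input, which is why outside the abelian case one must either appeal to the Lichtenbaum--Gross framework or turn to the numerical evidence assembled in \S\ref{ne}.
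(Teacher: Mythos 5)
Since the statement in question is a conjecture rather than a theorem, the paper contains no proof of it. What the paper offers instead are formal compatibilities and reductions (Remarks~\ref{firstremark} and \ref{chin=2}, Propositions~\ref{func props}, \ref{extraprop}, \ref{equivalences}), a derivation from the modified Lichtenbaum--Gross Conjecture (Theorem~\ref{mainres}), the case $k=\Q$, $\chi(1)=1$ via the abelian equivariant Tamagawa number conjecture (Corollary~\ref{ab case}), and numerical evidence (\S\ref{ne}). Your proposal correctly identifies this structure. Your consistency checks (b), (c), (d) correspond in substance to Remark~\ref{firstremark}(ii), (i) and Proposition~\ref{func props}(iii) respectively, and your observation that the $e_\chi$-component of $\epsilon_\sigma(\chi,S)$ is pinned down by Borel's theorem, so that the true content of the conjecture is rationality and integrality of that component, is exactly Proposition~\ref{func props}(iv)(a) and Remark~\ref{firstremark}(iv).

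Where you diverge is the abelian case and a couple of points of precision. You propose to exhibit $\epsilon_\sigma(\chi,S)$ directly as a Beilinson--Deligne--Soul\'e cyclotomic element and then compute its Beilinson regulator; the paper alludes to this possibility just after Remark~\ref{chin=2} but does not carry it out. Corollary~\ref{ab case} is instead deduced from Theorem~\ref{mainres} together with the Burns--Flach proof of the ETNC for $(h^0(\Spec(F))(r),\Z[G])$ over absolutely abelian $F$, precisely because the ETNC packages the exact integral normalisation (the factor $\wr$ rather than some element of $E^\times$) via the Euler characteristic/Fitting ideal computation in Proposition~\ref{firstver} and Lemma~\ref{jacres}; your final paragraph rightly flags this as the real difficulty, but the paper's route resolves it while the direct cyclotomic-element route is left as a sketch. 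Two smaller points: (a) replacing $\sigma$ by $h(\sigma)$ does \emph{not} multiply $\epsilon_\sigma(\chi,S)$ by a scalar; it transforms it by $h$, i.e.\ $\epsilon_{h(\sigma)}(\chi,S)=h(\epsilon_\sigma(\chi,S))$ (Remark~\ref{firstremark}(iii)), with consistency coming from $\chic{\chi}{h^{-1}(\sigma'),\sigma}=\chic{\chi}{\sigma',h(\sigma)}$. (b) Your proposed reduction ``by inductivity of Artin $L$-functions'' to the base field $\Q$ is not carried out in the paper and is not a triviality here: Conjecture~\ref{chinlike} involves the embedding-indexed family $(\reg_{1-r,\sigma'})_{\sigma'}$ and not merely an $L$-value, so a change of base field is more than an identity of $L$-series; the paper avoids this by fixing $k=\Q$ in Corollary~\ref{ab case} from the outset.
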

\smallskip

\subsection{}We make several straightforward remarks about Conjecture~\ref{chinlike}.

\begin{remark}\label{firstremark} \hfill

(i)
If Conjecture~\ref{chinlike} holds for $\sigma$ and $\chi$, then it holds for
$\sigma$ and $\chi^\delta$ for any $\delta$ in $\Gamma$
with $ \epsilon_\sigma(\chi^\delta,S) = \epsilon_\sigma(\chi,S)^\delta $.

(ii)
If Conjecture~\ref{chinlike} holds for $\sigma$ and $\chi$, then it holds for
$\tau\circ\sigma$ and $\chi$ with $ \epsilon_{\tau\circ\sigma}(\chi,S) =(-1)^r \epsilon_\sigma(\chi,S) $.

(iii)
If Conjecture~\ref{chinlike} holds for $\sigma$ and $\chi$, and
$ h $ is in $ G $, then it holds for $ h(\sigma) $ and $ \chi $
with $\epsilon_{h(\sigma)}(\chi,S) = h(\epsilon_\sigma(\chi,S))$. This is true because
 both $\reg_{1-r,\sigma'}(h(\epsilon_\sigma(\chi,S))) = \reg_{1-r,h^{-1}(\sigma')}(\epsilon_\sigma(\chi,S))$ and
$ \chic{\chi}{h^{-1}(\sigma'),\sigma} = \chic{\chi}{\sigma',h(\sigma)} $
(since $\tau_{h(\sigma)} = h \tau_\sigma h^{-1}$ if $ \sigma(k)\subset\R $
and $\check\chi$ is a class function on $G$).

(iv)
Any element $\epsilon_\sigma(\chi,S)$ that satisfies (\ref{conj eq}) for all $\sigma'$ and any
fixed $\gamma$ is necessarily unique modulo $\mathcal{O}\otimes K_{1-2r}(F)_{\rm tor}$
(see the proof of Proposition~\ref{func props}(iv)(a) below).
\end{remark}

\begin{remark}\label{chin=2}
If $L'(r,\check\chi) =0$, then $L'(r,\check\chi^\gamma) =0$ for all $\gamma\in \Gamma$ and so Conjecture~\ref{chinlike} is interesting only if $L(r,\check\chi) = 0 \not= L'(r,\check\chi)$.
By computing orders of vanishing via an explicit analysis of the Gamma factors and functional equation
of $L(s,\check\chi)$ (as in \cite[Chap.\ VII, {\S}12]{Neu}), one finds that this is the case in precisely the following situations:
\begin{itemize}
\item[(i)] $k$ has exactly one complex place, $\chi(1) = 1$ and
$\chi(1) + (-1)^r \chi(\tau_{\sigma'}) = 0$ for all $\sigma'\in \emb{F}$ with $\sigma'(k) \subset\R$;
\item[(ii)] $k$ is totally real, $\chi(1) + (-1)^r\chi(\tau_\sigma) =2$ for some $\sigma\in \emb{F}$
and $\chi(1) + (-1)^r\chi(\tau_{\sigma'}) =0$ for all $\sigma'\in \emb{F}\setminus \{g(\sigma): g \in G\}$.
\end{itemize}
In both of these situations $\chi$ is also realisable over its character field $\Q(\chi)$.
Indeed, this is
obvious in case~(i) and in case~(ii) can be shown
as follows.
For any $\sigma$ in $\emb{F}$ and any $\chi$,
we let $e_{\sigma,\chi}$ in $\Q(\chi)[G]$ denote the idempotent
$\frac{1}{2}(1+(-1)^r\tau_\sigma)e_\chi$
if $\sigma(k)\subset\R$ and the central idempotent $e_\chi$ otherwise.
In case~(ii) above we consider $e_{\sigma,\chi}$ for the given $\sigma$.
Then the character of the
$\Q(\chi)[G]$-module $V:=\Q(\chi)[G]e_{\sigma,\chi}$ is equal to
$m\chi$ for some natural number $m$ and we
must show $m=1$. But in $\C[G]e_{\sigma,\chi} \cong M_{\chi(1)}(\C)$ the idempotent $e_{\sigma,\chi}$
identifies with a matrix of rank one (since
$\chi(1) + (-1)^r\chi(\tau_\sigma) =2$) and so
$\chi(1) = {\rm dim}_\C(\C[G]e_{\sigma,\chi}) = {\rm dim}_\C(\C\otimes_{\Q(\chi)}V) = m\chi(1)$,
as required.
\end{remark}


There are a small number of cases in which the elements predicted by Conjecture \ref{chinlike} can be explained theoretically.

If $k = \Q$ then Remark~\ref{chin=2}(ii) implies that $L(r,\check\chi) = 0 \not= L'(r,\check\chi)$ if and only if $\chi(\tau_{\sigma}) = (-1)^r(2-\chi(1))$ for all $\sigma\in \emb{F}$, which certainly holds if $\chi(1) = 1$ and $\chi(\tau_{\sigma}) = (-1)^r$ for all $\sigma$.
In this case Proposition~\ref{func props}(ii) below
also allows us to assume that $F$ is an Abelian extension of $\Q$ and so one can explicitly construct elements $\epsilon_\sigma(\chi,S)$ as in Conjecture~\ref{chinlike} by using the cyclotomic elements in algebraic $K$-theory of Deligne et al.\
(see also the proof of Corollary~\ref{ab case} below).

If $k$ is imaginary quadratic, then Remark~\ref{chin=2}(i) implies that $L(r,\check\chi) = 0 \not= L'(r,\check\chi)$ if and only if $\chi(1)=1$ and in any such case Deninger has proved Gross's Conjecture (in \cite[Th. 3.1]{deninger}). It seems likely that Deninger's methods can be used to directly construct the corresponding elements in Conjecture \ref{chinlike} both for any such $\chi$ and, when $F$ is Galois over $\Q$, also for the character ${\rm Ind}^{G_{F/\Q}}_{G}(\chi)$ if this is irreducible. For example, interesting work in this direction (which does not however bound the denominators of the elements that arise) is described by Levin in \cite{Lev}.

In general, however, we are not aware of any theoretical constructions that can account for the existence of the
elements $\epsilon_\sigma(\chi,S)$ in Conjecture \ref{chinlike} for other classes of characters $\chi$. In \S\ref{db2} we will also
see that, for certain sets $S$, these mysterious elements should encode information about the
explicit module structure of even degree algebraic
$K$-groups in a manner that is strikingly parallel to the way in
which cyclotomic units encode structural information about ideal
class groups (see Remark~\ref{rubin remark}).

\section{Reductions and reformulations}

\subsection{}
Conjecture~\ref{chinlike} admits several useful reformulations
that we shall describe below.
We begin by recording some of its properties.
Here we write $\overline{\epsilon}_\sigma(\chi,S)$ for the image in $E\otimes
K_{1-2r}(F)$ of the conjectural element $\epsilon_\sigma(\chi,S)$.

\begin{prop}\label{func props} \hfill
\begin{itemize}
\item[(i)] It is enough to verify Conjecture~\ref{chinlike} with $S=S_\infty$.
\item[(ii)] It is enough to verify Conjecture~\ref{chinlike} after replacing $F$ by $F^{\ker(\chi)}$.
\item[(iii)] It is enough to verify Conjecture~\ref{chinlike} when $L'(r,\check\chi) \not= 0$ and with $E = \Q(\chi)$.
\item[(iv)] Assume that Conjecture~\ref{chinlike} is valid for $\sigma$ in $\emb{F}$.
\begin{itemize}
\item[(a)] $\overline{\epsilon}_\sigma(\chi,S)$ is uniquely specified by the formulas in Conjecture~\ref{chinlike}.
\item[(b)]
$\overline{\epsilon}_\sigma(\chi,S) = e_\chi \overline{\epsilon}_\sigma(\chi,S) $;
if $\sigma(k) \subset \R$,
then $\tau_\sigma(\overline{\epsilon}_\sigma(\chi,S)) = (-1)^r\overline{\epsilon}_\sigma(\chi,S)$.
\end{itemize}
\end{itemize}
\end{prop}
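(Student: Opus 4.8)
The plan is to prove the four parts in order, using part~(iv)(a) as a tool in the proof of~(iv)(b). For part~(i) one observes that $L_S(s,\check\chi^\gamma)$ and $L_{S_\infty}(s,\check\chi^\gamma)$ differ by the finite product $\delta_\gamma$ of the local Euler factors of $\check\chi^\gamma$ at the places in $S\setminus S_\infty$, evaluated at $s=r$; since $r<0$ and the Frobenius eigenvalues on an Artin representation are roots of unity, each such factor is a non-zero element of $\mathcal{O}$, and $\delta_\gamma=\gamma(\delta_{\mathrm{id}})$ because a realisation of $\check\chi^\gamma$ over $E$ is obtained from one of $\check\chi$ by applying $\gamma$ to matrix entries. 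As $L(r,\check\chi)=0$ is Galois-stable (Remark~\ref{chin=2}), $L_{S_\infty}(r,\check\chi^\gamma)=0$ for all $\gamma$, so the product rule gives $L'_S(r,\check\chi^\gamma)=\gamma(\delta_{\mathrm{id}})L'_{S_\infty}(r,\check\chi^\gamma)$, and hence $\delta_{\mathrm{id}}\cdot\epsilon_\sigma(\chi,S_\infty)$ is an admissible choice of $\epsilon_\sigma(\chi,S)$. For part~(iii), first note that if $L'(r,\check\chi)=0$ then, by Remark~\ref{chin=2} and the computation just made, $L'_S(r,\check\chi^\gamma)=0$ for all $\gamma$, so the right side of~(\ref{conj eq}) vanishes identically and $\epsilon_\sigma(\chi,S)=0$ works; thus one may assume $L'(r,\check\chi)\neq0$, and then Remark~\ref{chin=2} shows $\chi$ is realisable over $\Q(\chi)$, so $E=\Q(\chi)$ is admissible. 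Since any admissible $E$ contains $\Q(\chi)$, one then deduces the general case by pushing a solution for $E=\Q(\chi)$ forward along the inclusion of rings of integers: the constants $\chic{\chi}{\sigma',\sigma}$ lie in $\Q(\chi)$ and $\check\chi^\gamma$ depends only on $\gamma|_{\Q(\chi)}$, so the identities~(\ref{conj eq}) over $E$ reduce to those over $\Q(\chi)$.

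For part~(ii) set $F':=F^{\ker(\chi)}$, let $G':=G_{F'/k}$, and let $\chi'$ be the character of $G'$ inflating to $\chi$. Then $L_S(s,\check\chi')=L_S(s,\check\chi)$ and $w_r(\chi')=w_r(\chi)$, restriction of embeddings is a surjection $\emb{F}\twoheadrightarrow\emb{F'}$, and for $\tilde\sigma\in\emb{F}$ lying above $\sigma\in\emb{F'}$ the image of $\tau_{\tilde\sigma}$ in $G'$ is $\tau_\sigma$, whence $\chic{\chi}{\tilde\sigma',\tilde\sigma}=\chic{\chi'}{\tilde\sigma'|_{F'},\,\sigma}$ for every $\tilde\sigma'\in\emb{F}$. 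Given, for each $\sigma\in\emb{F'}$, a solution $\epsilon_\sigma(\chi',S)\in\mathcal{O}\otimes K_{1-2r}(F')$, I would take for $\epsilon_{\tilde\sigma}(\chi,S)$ its image under the natural map $\iota\colon\mathcal{O}\otimes K_{1-2r}(F')\to\mathcal{O}\otimes K_{1-2r}(F)$, which is equivariant for $G\twoheadrightarrow G'$ and for $\Gamma$. Since the composite of $\iota$ with the map $K_{1-2r}(F)\to K_{1-2r}(\C)$ induced by $\tilde\sigma'$ is the map induced by $\tilde\sigma'|_{F'}$, one has $\reg_{1-r,\tilde\sigma'}\circ\iota=\reg_{1-r,\,\tilde\sigma'|_{F'}}$, and~(\ref{conj eq}) for $(\chi,F)$ then follows term by term from~(\ref{conj eq}) for $(\chi',F')$.

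For part~(iv)(a) I would use Borel's theorem, which implies that the map $K_{1-2r}(F)\otimes\Q\to\prod_{\sigma'\in\emb{F}}\R$, $x\mapsto(\reg_{1-r}(\sigma'(x)))_{\sigma'}$, is injective. If $\bar z,\bar z'\in E\otimes K_{1-2r}(F)$ both satisfy~(\ref{conj eq}), then $\bar w:=\bar z-\bar z'$ has $\reg_{1-r,\sigma'}(\bar w^\gamma)=0$ for all $\sigma'\in\emb{F}$ and $\gamma\in\Gamma$; writing $\bar w=\sum_j e_j\otimes k_j$ in a $\Q$-basis $\{e_j\}$ of $E$ with $k_j\in K_{1-2r}(F)\otimes\Q$, the relations $\sum_j\gamma(e_j)\reg_{1-r}(\sigma'(k_j))=0$ and invertibility of the matrix $\bigl(\gamma(e_j)\bigr)_{\gamma,j}$ (linear independence of the distinct embeddings $E\hookrightarrow\C$) force $\reg_{1-r}(\sigma'(k_j))=0$ for all $\sigma',j$, hence $k_j=0$ and $\bar w=0$; the same argument with $E=\Q$ gives the uniqueness modulo torsion used in Remark~\ref{firstremark}(iv). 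For part~(iv)(b), by~(iv)(a) it suffices to show that $e_\chi\overline\epsilon_\sigma(\chi,S)$, and also $(-1)^r\tau_\sigma\bigl(\overline\epsilon_\sigma(\chi,S)\bigr)$ when $\sigma(k)\subset\R$, again satisfy~(\ref{conj eq}). The key point is that the combined regulator $R\colon E\otimes K_{1-2r}(F)\to\prod_{\sigma'\in\emb{F}}\C$, $R(x)_{\sigma'}=\reg_{1-r,\sigma'}(x)$, is $E[G]$-linear when $G$ acts on the target by $(g\cdot v)_{\sigma'}=v_{\sigma'\circ g}$, so that $R\bigl((e_\chi\overline\epsilon_\sigma(\chi,S))^\gamma\bigr)=e_{\chi^\gamma}R\bigl(\overline\epsilon_\sigma(\chi,S)^\gamma\bigr)$ and $R\bigl(\tau_\sigma\overline\epsilon_\sigma(\chi,S)^\gamma\bigr)_{\sigma'}=R\bigl(\overline\epsilon_\sigma(\chi,S)^\gamma\bigr)_{\sigma'\circ\tau_\sigma}$. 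Since~(\ref{conj eq}) says $R\bigl(\overline\epsilon_\sigma(\chi,S)^\gamma\bigr)$ is a complex scalar multiple of $\bigl(\chic{\chi^\gamma}{\sigma',\sigma}\bigr)_{\sigma'}$, both claims reduce to two elementary facts: that for each irreducible $\psi$ the vector $\bigl(\chic{\psi}{\sigma',\sigma}\bigr)_{\sigma'}$ is fixed by $e_\psi$ — which follows from the convolution identity $\sum_{g\in G}\check\psi(g)\check\psi(g^{-1}h)=|G|\psi(1)^{-1}\check\psi(h)$ and the fact that $\sigma'\mapsto\chic{\psi}{\sigma',\sigma}$ is supported on the free $G$-orbits through $\sigma$ and $\tau\circ\sigma$ — and that, when $\sigma(k)\subset\R$, $\chic{\chi}{\sigma',\sigma}=(-1)^r\chic{\chi}{\sigma'\circ\tau_\sigma,\,\sigma}$, which follows from $\sigma'\circ\tau_\sigma=(g\tau_\sigma)(\sigma)$ when $\sigma'=g(\sigma)$ together with $\check\chi(ab)=\check\chi(ba)$ and $\check\chi(\tau_\sigma g\tau_\sigma)=\check\chi(g)$.

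The only genuinely non-formal ingredient is Borel's theorem in~(iv)(a). The bulk of the remaining work, and the point at which I would expect sign and normalisation slips, is the bookkeeping of decomposition groups, embeddings, and the coefficients $\chic{\chi}{\sigma',\sigma}$ under the inflation map in~(ii) and under the $G$-action in~(iv)(b).
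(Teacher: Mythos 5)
Your proposal is correct and follows essentially the same route as the paper: (i) proceeds by rescaling by Euler factors at the added finite places (the paper does this one place at a time, you do it all at once -- same thing), (ii) by inflation, (iii) by the vanishing case plus realisability of $\chi$ over $\Q(\chi)$, and (iv) by Borel's theorem followed by a convolution/class-function computation for the vector $\bigl(\chic{\chi}{\sigma',\sigma}\bigr)_{\sigma'}$.

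One point where you are actually more careful than the paper: for (iv)(a) the paper asserts that $\bigcap_{\sigma'}\ker(\widetilde{\reg}_{\sigma'})$ vanishes in $E\otimes K_{1-2r}(F)$ and concludes uniqueness from~(\ref{conj eq}) with $\gamma$ trivial alone, but the $E$-linear maps $\widetilde{\reg}_{\sigma'}$ land in $\C$ and factor through the multiplication map $E\otimes_\Q\R\to\C$, which is not injective once $[E:\Q]>2$, so the claim as stated is not obviously forced by Borel alone. Your version -- running over all $\gamma\in\Gamma$ and invoking invertibility of the matrix $(\gamma(e_j))_{\gamma,j}$ to recover the individual real regulator values $\reg_{1-r}(\sigma'(k_j))$ before applying Borel over $\Q$ -- genuinely closes this. (Your parenthetical that ``the same argument with $E=\Q$'' recovers Remark~\ref{firstremark}(iv) for a single fixed $\gamma$ is right in spirit, but note that for a single $\gamma\neq1$ and $[E:\Q]>2$ one cannot reduce to $E=\Q$ and the remark as literally stated also needs the all-$\gamma$ argument.)

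Two tiny slips, neither affecting validity: in (iv)(b) you write $\sigma'\circ\tau_\sigma=(g\tau_\sigma)(\sigma)$; with the paper's convention $g(\sigma)=\sigma\circ g^{-1}$ one actually has $\sigma'\circ\tau_\sigma=(\tau_\sigma g)(\sigma)$, which is harmless since $\check\chi$ is a class function, so $\check\chi(\tau_\sigma g)=\check\chi(g\tau_\sigma)$ and the computation comes out the same. And the phrase ``supported on the free $G$-orbits'' is unnecessary: since $F/k$ is Galois the $G$-action on $\emb{F}$ is already free (stabilisers of embeddings are trivial), so the element $g$ in the case analysis is unique and well-definedness of $\chic{\chi}{\sigma',\sigma}$ is automatic.
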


\begin{proof}
If Conjecture~\ref{chinlike} is valid as stated, then for any $v\notin S$
it is validated with $S$ replaced by $S' = S\cup\{v\}$ by setting $\epsilon_\sigma(\chi,S'):= {\rm det}_\C(1-\sigma_w{\rm N}v^{-r}\mid V_\chi^{I_w})\epsilon_\sigma(\chi,S)$ where $w$ is any
place of $F$ above $v$, $G_w$ and $I_w$ are the decomposition and inertia subgroups of $w$ in $G$, $\sigma_w$ the
Frobenius automorphism in $G_w/I_w$ and $V_\chi$ a $\C[G]$-module of character $\chi$. (Here we use the fact that,
since $r<0$, the element ${\rm det}_\C(1-\sigma_w{\rm N}v^{-r}\mid V_\chi^{I_w})$ belongs to $\mathcal{O}$.) This
proves claim (i).

Claim (ii) is easily checked and the first assertion of (iii) is obvious since if $L'(r,\check\chi) = 0$,
then $L'(r,\check\chi^\gamma)=0$ for all $\gamma \in \Gamma$ and so Conjecture~\ref{chinlike} is validated
by setting $\epsilon_\sigma(\chi,S):= 0$.
Finally, it is clear that if Conjecture~\ref{chinlike} is valid for a given field $E$,
then it is valid for any larger such field and so the second claim of (iii) is true because
$L'(r,\check\chi) \not= 0$ implies that $\chi$ can be realised over $\Q(\chi)$ by Remark~\ref{chin=2}.

To prove claim (iv) we set $\overline{\epsilon}_\sigma := \overline{\epsilon}_\sigma(\chi,S)$. We extend ${\reg}_{1-r,\sigma'}$ to a homomorphism of $E$-modules $E\otimes
K_{1-2r}(F) \to \C$ in the natural way, write $\widetilde{\reg}_{\sigma'}$ for $(2\pi i)^r{\reg}_{1-r,\sigma'}$ and recall that
$\bigcap_{\sigma'\in \emb{F}}\ker(\widetilde{\reg}_{\sigma'})$ vanishes (by Borel \cite{Borel74}). It is thus
clear that $\overline{\epsilon}_\sigma$ is uniquely specified by (\ref{conj eq})
for all $ \sigma' $ and with $\gamma$ equal to the trivial element, proving (iv)(a).
For the same
reason, to prove (iv)(b) it suffices to show that
$\widetilde{\reg}_{\sigma'}(e_\chi \overline{\epsilon}_\sigma) = \widetilde{\reg}_{\sigma'}(\overline{\epsilon}_\sigma)$
and, if $\sigma(k) \subset\R$, that $\widetilde{\reg}_{\sigma'}(\tau_\sigma(\overline{\epsilon}_\sigma)) =
(-1)^r\widetilde{\reg}_{\sigma'}(\overline{\epsilon}_\sigma)$,
for all $\sigma'$.  But
$
 \widetilde{\reg}_{\sigma'}(e_\chi\overline{\epsilon}_\sigma)
=
 \chi(1)|G|^{-1}\sum_{h \in G}\check\chi(h)\widetilde{\reg}_{h^{-1}(\sigma')}(\overline{\epsilon}_\sigma)
=
 \widetilde{\reg}_{\sigma'}(\overline{\epsilon}_\sigma)
$
where the last equality follows by combining the conjectural equality (\ref{conj eq}) (with $\gamma$
trivial) with the fact that
$\chi(1)|G|^{-1}\sum_{h \in G}\check\chi(h)\check\chi(h^{-1}g) = \check\chi(g)$
for all $g\in G$, so that
$ \chi(1)|G|^{-1}\sum_{h\in G} \check\chi(h)\chic{\chi}{h^{-1}(\sigma'),\sigma}=\chic{\chi}{\sigma',\sigma}$.
Moreover, if $\sigma(k) \subset \R$ then one easily checks that
$ \chic{\chi}{\tau_\sigma^{-1}(\sigma'), \sigma} = (-1)^r \chic{\chi}{\sigma',\sigma} $,
and combining this with (\ref{conj eq}) (with $ \gamma $ trivial)
we find that $\widetilde{\reg}_{\sigma'}(\tau_\sigma(\overline{\epsilon}_\sigma))=
\widetilde{\reg}_{\tau_\sigma^{-1}(\sigma')}(\overline{\epsilon}_\sigma)=
(-1)^r\widetilde{\reg}_{\sigma'}(\overline{\epsilon}_\sigma)$.
\end{proof}

In the next result we reformulate Conjecture~\ref{chinlike} in a style reminiscent of the
refinement of (a special case of) Stark's conjecture that is stated by
Chinburg in~\cite{chin}.
Here we write $\mathcal{D}_E$ for the different of $E/\Q$.

\begin{prop}\label{extraprop}
Let $ \sigma $ be in $ \emb{F} $.

\begin{itemize}
\item[(i)]
If Conjecture~\ref{chinlike} holds for $ \sigma $,
then for each $d$ in $\mathcal{D}_E^{-1}$ the element
\[\qquad{\rm Tr}_d(\epsilon_\sigma(\chi,S)) := \sum_{\gamma\in \Gamma}\gamma(d)\epsilon_\sigma(\chi,S)^\gamma\]
belongs to $K_{1-2r}(F)$ and for all $\sigma' \in \emb{F}$ satisfies
\begin{equation}\label{Trace-id}
\quad
 (2\pi i)^r\reg_{1-r,\sigma'}({\rm Tr}_d(\epsilon_\sigma(\chi,S))) =
 \wr \sum_{\gamma \in\Gamma}\gamma(d)\gamma(\chic{\chi}{\sigma',\sigma})L'_S(r,\check\chi^\gamma)
\,.
\end{equation}
\item[(ii)] If for each $d$ in $\mathcal{D}_E^{-1}$ there exists an element 
$\alpha_{d,\sigma}(\chi,S)$ of $K_{1-2r}(F)$ such that~\eqref{Trace-id}
holds with $\alpha_{d,\sigma}(\chi,S)$ in place of
$ {\rm Tr}_d(\epsilon_\sigma(\chi,S)) $, then Conjecture~\ref{chinlike} holds
for~$ \sigma $.
\end{itemize}
\end{prop}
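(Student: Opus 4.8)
The plan is to exploit the duality between $E$ and $\mathcal{D}_E^{-1}$ under the trace form of $E/\Q$ together with the injectivity of the collection of regulator maps $\widetilde{\reg}_{\sigma'} := (2\pi i)^r\reg_{1-r,\sigma'}$ guaranteed by Borel's theorem, exactly as in the proof of Proposition~\ref{func props}(iv). For part~(i), I would first fix a $\Q$-basis $\{b_j\}$ of $E$ lying inside $\mathcal{O}$ together with the dual basis $\{b_j^*\}$ of $\mathcal{D}_E^{-1}$ characterised by ${\rm Tr}_{E/\Q}(b_i b_j^*) = \delta_{ij}$. Writing $\epsilon := \epsilon_\sigma(\chi,S) \in \mathcal{O}\otimes K_{1-2r}(F)$ and expanding $\epsilon = \sum_j b_j\otimes x_j$ with $x_j \in K_{1-2r}(F)$, a direct computation with the semilinear $\Gamma$-action gives ${\rm Tr}_d(\epsilon) = \sum_{\gamma\in\Gamma}\gamma(d)\epsilon^\gamma = \sum_j\bigl(\sum_\gamma \gamma(d b_j)\bigr)\otimes x_j = \sum_j {\rm Tr}_{E/\Q}(d b_j)\, x_j$, which lies in $K_{1-2r}(F)$ since each ${\rm Tr}_{E/\Q}(d b_j)\in\Z$ (indeed $d\in\mathcal{D}_E^{-1}$ and $b_j\in\mathcal{O}$). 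The regulator identity~\eqref{Trace-id} then follows by applying $\widetilde{\reg}_{1-r,\sigma'}$, which commutes with the $\Gamma$-action appropriately, to the equation $\sum_\gamma\gamma(d)\epsilon^\gamma$ and invoking~\eqref{conj eq} termwise for each $\gamma$; one simply sums the conjectural equalities $\widetilde{\reg}_{\sigma'}(\epsilon^\gamma) = \wr\,\gamma(\chic{\chi}{\sigma',\sigma})L'_S(r,\check\chi^\gamma)$ after multiplying the $\gamma$-th by $\gamma(d)$, using that $\widetilde{\reg}_{\sigma'}$ is $E$-linear on $E\otimes K_{1-2r}(F)$ in the sense that it carries $e\otimes x$ to $e\cdot\widetilde{\reg}_{\sigma'}(1\otimes x)$.

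For part~(ii), the task is the converse reconstruction: given the family $\{\alpha_{d,\sigma}(\chi,S)\}_{d\in\mathcal{D}_E^{-1}}$ in $K_{1-2r}(F)$ satisfying~\eqref{Trace-id}, I would build a single element $\epsilon_\sigma(\chi,S)\in\mathcal{O}\otimes K_{1-2r}(F)$ verifying~\eqref{conj eq}. The natural candidate, using the dual bases above, is $\epsilon := \sum_j b_j\otimes \alpha_{b_j^*,\sigma}(\chi,S)$; this manifestly lies in $\mathcal{O}\otimes K_{1-2r}(F)$ because each $b_j\in\mathcal{O}$. One then needs to check (a) that ${\rm Tr}_d(\epsilon) = \alpha_{d,\sigma}(\chi,S)$ modulo torsion for every $d\in\mathcal{D}_E^{-1}$ — this uses the change-of-basis relations among the $b_j^*$ and the uniqueness modulo $K_{1-2r}(F)_{\rm tor}$ of any solution of~\eqref{Trace-id}, which in turn follows from Borel's theorem as in the proof of Proposition~\ref{func props}(iv)(a) — and (b) that~\eqref{conj eq} holds for this $\epsilon$. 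For (b) I would apply $\widetilde{\reg}_{\sigma'}$ to $\epsilon^\gamma$ for an arbitrary $\gamma\in\Gamma$: writing $\widetilde{\reg}_{\sigma'}(\epsilon^\gamma) = \sum_j\gamma(b_j)\widetilde{\reg}_{\sigma'}(1\otimes\alpha_{b_j^*,\sigma}(\chi,S))$ and recognising, via~\eqref{Trace-id} applied at $d = b_j^*$, that $\widetilde{\reg}_{\sigma'}(\alpha_{b_j^*,\sigma}(\chi,S)) = \wr\sum_{\delta}\delta(b_j^*)\delta(\chic{\chi}{\sigma',\sigma})L'_S(r,\check\chi^\delta)$, one substitutes and collapses the double sum over $j$ and $\delta$ using the orthogonality relation $\sum_j\gamma(b_j)\delta(b_j^*) = \mathbf{1}_{\gamma=\delta}$ (which is just the statement that the matrix $(\gamma(b_j))_{\gamma,j}$ and $(\delta(b_j^*))_{\delta,j}$ are inverse-transpose to one another, equivalent to ${\rm Tr}_{E/\Q}(b_ib_j^*) = \delta_{ij}$ after Galois-averaging). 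This yields $\widetilde{\reg}_{\sigma'}(\epsilon^\gamma) = \wr\,\gamma(\chic{\chi}{\sigma',\sigma})L'_S(r,\check\chi^\gamma)$, which is precisely~\eqref{conj eq}.

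The main obstacle I anticipate is the bookkeeping in (b): one must be careful that the semilinear $\Gamma$-action $x\mapsto x^\gamma$ on $E\otimes K_{1-2r}(F)$ interacts with $\widetilde{\reg}_{\sigma'}$ in the way the orthogonality argument requires — specifically that $\widetilde{\reg}_{\sigma'}((e\otimes x)^\gamma) = \gamma(e)\cdot\widetilde{\reg}_{\sigma'}(1\otimes x)$ — and that the "$L'_S(r,\check\chi^\delta)$" appearing from~\eqref{Trace-id} at $d=b_j^*$ is indexed consistently so that the orthogonality sum kills all $\delta\neq\gamma$. Everything else is linear algebra over $\Q$ dualised against the different, plus two appeals to Borel's rank/injectivity statement already quoted in the proof of Proposition~\ref{func props}(iv); the independence of~\eqref{conj eq} from the choice of $\Q$-basis $\{b_j\}$ of $\mathcal{O}\cap E$ (only finitely many such bases are needed, and any two differ by a $\Q$-change of basis) should be checked but presents no difficulty.
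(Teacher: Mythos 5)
Your proof matches the paper's in essence: both choose a $\Z$-basis $\{b_j\}$ of $\mathcal{O}$ with dual $\Z$-basis $\{b_j^*\}$ of $\mathcal{D}_E^{-1}$ under the trace pairing, set $\epsilon:=\sum_j b_j\otimes\alpha_{b_j^*,\sigma}(\chi,S)$ in part~(ii), and collapse the double sum via the orthogonality $\sum_j\gamma(b_j)\delta(b_j^*)=\mathbf{1}_{\gamma=\delta}$ (the paper phrases this as the matrices $(\gamma_j(d_i))$ and $(\gamma_i(e_j))$ being mutually inverse). Two small points: you should say ``$\Z$-basis of $\mathcal{O}$'' rather than ``$\Q$-basis of $E$ lying inside $\mathcal{O}$'', since only then is $\{b_j^*\}$ a $\Z$-basis of $\mathcal{D}_E^{-1}$ and are the coefficients $x_j$ of $\epsilon$ honest elements of $K_{1-2r}(F)$; and your step~(a) in part~(ii) is unnecessary (one verifies \eqref{conj eq} directly as in your step~(b)), while for part~(i) the paper gives a basis-free argument by observing ${\rm Tr}_d(\epsilon)=\sum_{\gamma\in\Gamma}(d\epsilon)^\gamma\in{\rm Tr}_{E/\Q}(\mathcal{D}_E^{-1})\otimes K_{1-2r}(F)\subseteq K_{1-2r}(F)$.
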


\begin{proof}
To prove (i) we note Conjecture~\ref{chinlike} implies that for every $\gamma$ in $\Gamma$ one has
$\gamma(d)\epsilon_\sigma(\chi,S)^\gamma\in {\mathcal{D}}_E^{-1}\otimes K_{1-2r}(F)$
and also
$(2\pi i)^r{\reg}_{1-r,\sigma'}(\gamma(d)\epsilon_\sigma(\chi,S)^\gamma) = \wr\gamma(d)\gamma(\chic{\chi}{\sigma',\sigma})L'_S(r,\check\chi^\gamma)$.
It is thus clear that ${\rm Tr}_d(\epsilon_\sigma(\chi,S))$ satisfies~\eqref{Trace-id}.
In addition one has ${\rm Tr}_{E/\Q}({\mathcal{D}}_E^{-1})\subseteq \Z$ and so
\[{\rm Tr}_d(\epsilon_\sigma(\chi,S)) =
 \sum_{\gamma\in \Gamma}(d\epsilon_\sigma(\chi,S))^\gamma \in {\rm Tr}_{E/\Q}(\mathcal{D}_E^{-1})\otimes
K_{1-2r}(F)\subseteq K_{1-2r}(F),\]
as required.  For claim (ii), let $\{d_j\}_j$ be a $\Z$-basis of $\mathcal{D}_E^{-1}$,
$\{e_j\}_j$ the dual $ \Z $-basis of $\mathcal{O}$ with respect to the trace pairing,
and set $\epsilon_\sigma(\chi,S) := \sum_j e_j \otimes \alpha_{d_j,\sigma}(\chi,S) $.
If $ \gamma $ is in $ \Gamma $, then
\begin{alignat*}{1}
  (2\pi i )^r \reg_{1-r,\sigma'}(\epsilon_\sigma(\chi,S)^\gamma)
&=
  \wr \sum_{\gamma',j} \gamma(e_j) \gamma'(d_j) \gamma'(\chic{\chi}{\sigma',\sigma}) L_S'(r,\check\chi^{\gamma'})
\\
&=
  \wr \gamma(\chic{\chi}{\sigma',\sigma}) L_S'(r,\check\chi^\gamma)
\end{alignat*}
because if $\Gamma = \{\gamma_j\}_j$, then
$(\gamma_j(d_i))(\gamma_i(e_j))$ is the identity matrix,
and the same holds for $(\gamma_i(e_j))(\gamma_j(d_i))$.
\end{proof}

\subsection{}
In this subsection we give (in
Proposition~\ref{equivalences}(ii)) a more concise reformulation of Conjecture~\ref{chinlike}
that requires some preliminaries. This reformulation will be particularly useful in Section~\ref{ne} when we numerically verify Conjecture~\ref{chinlike} in the case that  
 $S=S_\infty$ and $L_S'(r,\chi)\not=0$.

Claim ~(iii) of Proposition~\ref{func props} implies that one only needs to consider the case that $L_S(r,\chi)=0\not=L_S'(r,\chi)$.
Remark~\ref{chin=2} specifies when this happens,
and we let $ \embsp{r,\chi}{F}$ denote the subset of $\emb{F} $ comprising
the $2|G|$ elements $\sigma$ with $\sigma(k) \not\subseteq\R$
in case~(i) of this remark, and comprising the $|G|$ elements $\sigma$ with
$\chi(1) + (-1)^r\chi(\tau_{\sigma}) =2$ in case~(ii).
(Equivalently, one has $\sigma\notin \embsp{r,\chi}{F} $ if and only if 
both $\sigma(k)\subset \R$ and $\tau_\sigma$ acts as multiplication by $(-1)^{r+1}$ in the representation underlying~$\chi$.)
Note that $ \embsp{r,\chi}{F} = \embsp{r,\chi^\gamma}{F} $ for $\gamma$ in $\Gamma$.
We recall that the idempotent $e_{\sigma,\chi}$ was defined in Remark~\ref{chin=2}
for any $ \sigma $ in $ \emb{F} $.

\begin{lemma}\label{chispecial}
Assume $L'(r,\check\chi) =0$ and $L'(r,\check\chi) \ne 0$.
\begin{enumerate}
\item[(i)]
If either $\sigma$ or $\sigma'$ is not in $\embsp{r,\chi}{F}$, then $\chic{\chi}{\sigma',\sigma}=0$.

\item[(ii)]
For $\sigma$ in $\emb{F}$, one has $e_{\sigma,\chi}=0$ if and only if $\sigma\notin \embsp{r,\chi}{F}$.

\item[(iii)]
If $g$ is in $G$ and $\sigma$ in $\emb{F}$, then
$\chic{\chi}{\sigma,\sigma}e_{\sigma,\chi} g^{-1} e_{\sigma,\chi} = \chic{\chi}{g(\sigma),\sigma} e_{\sigma,\chi}$.
\end{enumerate}
\end{lemma}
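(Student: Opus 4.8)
The plan is to prove the three assertions of Lemma~\ref{chispecial} in the order (ii), then (i), then (iii), since the earlier parts will be used freely in the later ones. Throughout I fix a $\C[G]$-module $V_\chi$ with character $\chi$ and (when $\sigma(k)\subset\R$) note that $\tau_\sigma$ acts on $V_\chi$ as an involution, so $V_\chi$ decomposes into $\pm1$-eigenspaces under $\tau_\sigma$; the number $\chi(1)+(-1)^r\chi(\tau_\sigma)$ equals $2\dim_\C(V_\chi)^{\tau_\sigma=(-1)^r}$, i.e.\ twice the dimension of the eigenspace on which $\tau_\sigma$ acts as $(-1)^r$.

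First I would prove (ii). For $\sigma$ with $\sigma(k)\not\subseteq\R$ one has $e_{\sigma,\chi}=e_\chi$, which is non-zero since $\chi$ is irreducible, and $\sigma\in\embsp{r,\chi}{F}$ by the definition of $\embsp{r,\chi}{F}$ (in case~(i); in case~(ii) there are no such $\sigma$, but then $e_\chi\neq 0$ still holds trivially and the equivalence is vacuous on that side — more carefully, one checks directly from the description in Remark~\ref{chin=2} that in case~(ii) every $\sigma$ has $\sigma(k)\subset\R$). For $\sigma$ with $\sigma(k)\subset\R$, $e_{\sigma,\chi}=\tfrac12(1+(-1)^r\tau_\sigma)e_\chi$; passing to $\C[G]e_\chi\cong M_{\chi(1)}(\C)$, this is the projector onto the $(-1)^r$-eigenspace of $\tau_\sigma$ on $V_\chi$, which is zero exactly when that eigenspace is zero, i.e.\ exactly when $\chi(1)+(-1)^r\chi(\tau_\sigma)=0$, i.e.\ exactly when $\sigma\notin\embsp{r,\chi}{F}$ by the parenthetical characterisation given just before the lemma. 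This gives (ii).

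Next, (i). By Remark~\ref{firstremark}(iii) and the $G$-equivariance of $\embsp{r,\chi}{F}$ it suffices to treat representative $\sigma'$ of each orbit; concretely, using the case distinction in the definition of $\chic{\chi}{\sigma',\sigma}$, if $\sigma'$ is not in the $G$-orbit (resp.\ $G\cup\tau G$-orbit) of $\sigma$ then $\chic{\chi}{\sigma',\sigma}=0$ by the last clause, so we may assume $\sigma'=g(\sigma)$ or $\sigma'=\tau g(\sigma)$; in either case $\sigma'\in\embsp{r,\chi}{F}\iff\sigma\in\embsp{r,\chi}{F}$, so it is enough to show: if $\sigma\notin\embsp{r,\chi}{F}$ then $\chic{\chi}{g(\sigma),\sigma}=0$ (and similarly for $\tau g(\sigma)$). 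By (ii), $\sigma\notin\embsp{r,\chi}{F}$ forces $\sigma(k)\subset\R$ and $\tau_\sigma$ acting as $(-1)^{r+1}$ on $V_\chi$; then $\chic{\chi}{g(\sigma),\sigma}=\check\chi(g)+(-1)^r\check\chi(g\tau_\sigma)=\mathrm{Tr}_{V_{\check\chi}}(g\circ(1+(-1)^r\tau_\sigma))$, and $1+(-1)^r\tau_\sigma=0$ on $V_{\check\chi}$ since $\check\chi$ is the contragredient and $\tau_\sigma$ has order $\le 2$ (so it acts as $(-1)^{r+1}$ on $V_{\check\chi}$ too). Hence the trace vanishes. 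The cases $\sigma(k)\not\subseteq\R$ are automatically in $\embsp{r,\chi}{F}$ (in the relevant case~(i) of Remark~\ref{chin=2}) so there is nothing to prove there. This establishes (i).

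Finally, (iii). If $\sigma\notin\embsp{r,\chi}{F}$, both sides vanish by (i) and (ii), so assume $\sigma\in\embsp{r,\chi}{F}$. Work inside $\C[G]e_\chi\cong M_{\chi(1)}(\C)=\mathrm{End}_\C(V_\chi)$, writing $p:=e_{\sigma,\chi}$ for the relevant projector ($p=e_\chi$ or $p=\tfrac12(1+(-1)^r\tau_\sigma)e_\chi$). The key observation is that in the situations of Remark~\ref{chin=2} the image $pV_\chi$ is one-dimensional: in case~(i) this is because $\chi(1)=1$, and in case~(ii) because $\chi(1)+(-1)^r\chi(\tau_\sigma)=2$ means the $(-1)^r$-eigenspace of $\tau_\sigma$ is one-dimensional (this is exactly the rank-one computation already carried out in Remark~\ref{chin=2}). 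Hence $p\,g^{-1}\,p$ is an endomorphism of the line $pV_\chi$, so equals scalar multiplication by $\lambda p$ where $\lambda=\mathrm{Tr}(p g^{-1} p)=\mathrm{Tr}(p g^{-1})$ (using $p^2=p$ and the trace being computed on all of $V_\chi$, the complementary block contributing $0$). It remains to identify $\chic{\chi}{\sigma,\sigma}\lambda=\chic{\chi}{g(\sigma),\sigma}$, equivalently — since $\chic{\chi}{\sigma,\sigma}=\check\chi(1)+(-1)^r\check\chi(\tau_\sigma)$ in the real case and $=\check\chi(1)$ otherwise, both nonzero on $\embsp{r,\chi}{F}$ — to check that $\mathrm{Tr}(p g^{-1})\cdot\chic{\chi}{\sigma,\sigma}=\chic{\chi}{g(\sigma),\sigma}$. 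Expanding $p$: in the real case $\chic{\chi}{\sigma,\sigma}p=\tfrac12\chic{\chi}{\sigma,\sigma}(1+(-1)^r\tau_\sigma)e_\chi$; since $pV_\chi$ is a line and $e_\chi V_{\check\chi}=V_{\check\chi}$, one computes $\mathrm{Tr}_{V_{\check\chi}}(p g^{-1})=\tfrac12(\check\chi(g)+(-1)^r\check\chi(g\tau_\sigma))$ divided by the "size" $\tfrac12\chic{\chi}{\sigma,\sigma}$... so a short direct computation with characters, using $\check\chi(g\tau_\sigma)=\check\chi(\tau_\sigma g)$ (class function) and $\tau_\sigma^2=1$, yields $\chic{\chi}{\sigma,\sigma}\,\mathrm{Tr}(pg^{-1})=\check\chi(g)+(-1)^r\check\chi(g\tau_\sigma)=\chic{\chi}{g(\sigma),\sigma}$; the complex case $p=e_\chi$ is the standard identity $\chi(1)|G|^{-1}\sum_h\check\chi(h)\check\chi(h^{-1}g)=\check\chi(g)$ already invoked in the proof of Proposition~\ref{func props}(iv). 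The main obstacle, such as it is, will be bookkeeping the normalising constant $\chic{\chi}{\sigma,\sigma}$ correctly and handling the real and complex cases uniformly; the underlying point — $e_{\sigma,\chi}$ cuts out a line, so $e_{\sigma,\chi}R e_{\sigma,\chi}$ is scalar — is the conceptual heart and is immediate from Remark~\ref{chin=2}.
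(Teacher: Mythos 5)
Your proposal is correct and follows essentially the same route as the paper's proof: for (ii) you identify $e_{\sigma,\chi}$ with the rank-one (or zero) projector onto the $(-1)^r$-eigenspace of $\tau_\sigma$ inside $\C[G]e_\chi\cong M_{\chi(1)}(\C)$; for (i) you use that $\sigma\notin\embsp{r,\chi}{F}$ forces $\tau_\sigma$ to act as $(-1)^{r+1}$ on $V_{\check\chi}$; and for (iii) you use the rank-one observation to reduce $e_{\sigma,\chi}g^{-1}e_{\sigma,\chi}$ to a scalar identified by a trace. The paper leaves most of this implicit (especially for (iii), which it dismisses with ``similar considerations''), and you supply the details, though the middle of your (iii) has a momentary muddle (``divided by the `size' \dots'') that you should tidy into the clean identity $\mathrm{Tr}(pg^{-1})\cdot\chic{\chi}{\sigma,\sigma}=\chic{\chi}{g(\sigma),\sigma}$ which your final lines in fact establish.
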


\begin{proof}
For claim~(i) we note that $ \chic{\chi}{\sigma',\sigma} \ne 0 $ implies that 
both $ \sigma $ and $ \sigma' $ are in $ \embsp{r,\chi}{F} $ or neither are.
If $ \sigma \notin \embsp{r,\chi}{F} $ then $ \sigma(k)\subset\R $,
and $ \chic{\chi}{\sigma',\sigma} = 0 $ because of the definitions.
For claim~(ii) we note that $ e_{\sigma,\chi}=0 $ from the definitions
if $ \sigma \notin\embsp{r,\chi}{F} $, and that
$ e_{\sigma,\chi}\ne0 $ if $ \sigma \in\embsp{r,\chi}{F}$: when $ \sigma(k)\not\subseteq\R $
this is obvious, and if $ \sigma(k)\subset\R $ then it follows
by choosing an isomorphism $\Q(\chi)[G]e_\chi \simeq M_{\chi(1)}(\Q(\chi))$ such that
$1+(-1)^r\tau_\sigma$ corresponds to the diagonal matrix with entries $2,0,\dots,0$ on the diagonal.
Claim~(iii) for $\sigma$ in $\embsp{r,\chi}{F}$ follows from similar
considerations using that $\chic{\chi}{\sigma,\sigma}$ is equal to either $1$ or $2$, and for other
$\sigma$ is obvious from~(ii).
\end{proof}

\begin{remark}\label{last one} Lemma~\ref{chispecial}(i) refines Proposition~\ref{func props}
as it implies that Conjecture~\ref{chinlike} holds (with $ \epsilon_\sigma(\chi,S) = 0 $)
when $ L_S'(r,\chi)\not=0 $ and  $ \sigma \notin\embsp{r,\chi}{F}$.
Considering also the first three parts of Remark~\ref{firstremark},
we see that verifying Conjecture \ref{chinlike} for a given $ \chi $ and any given embedding $ \sigma $
in $\embsp{r,\chi}{F} $ does so for all characters in the $\Gamma$-orbit $\{\chi^\gamma: \gamma\in \Gamma\}$
and for all embeddings $ \sigma $ in~$ \emb{F} $.
\end{remark}

In the next result let $K_{1-2r}(F)_{\rm tf}$ denote the image of
$K_{1-2r}(F)$ in $\Q\otimes K_{1-2r}(F)$.
Then $\reg_{1-r,\sigma'}$ factorizes through the map
$\mathcal{O}\otimes K_{1-2r}(F)\to\mathcal{O}\otimes K_{1-2r}(F)_{\rm tf}$
and we use the same notation for the resulting map.

\begin{prop}\label{equivalences}
Let $\sigma$ be in $\emb{F}$.  Then the following statements are equivalent.

\begin{itemize}
\item[(i)]
Conjecture~\ref{chinlike} holds for $\sigma$.

\item[(ii)]
There exists $\beta_\sigma(\chi,S)$ in $\mathcal{O}\otimes K_{1-2r}(F)_{\rm tf} \subseteq E\otimes K_{1-2r}(F)$
that satisfies $e_{\sigma,\chi}\beta_\sigma(\chi,S) = \beta_\sigma(\chi,S)$ and, for all $\gamma$ in $\Gamma$, also
\begin{equation*}
 (2\pi i)^r{\reg}_{1-r,\sigma}(\beta_\sigma(\chi,S)^\gamma)
=
  \wr \chic{\chi}{\sigma,\sigma} L'_S(r,\check\chi^\gamma)
\,.
\end{equation*}

\end{itemize}
\end{prop}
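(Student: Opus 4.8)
The plan is to prove the equivalence by passing between the ``diagonal'' datum $\beta_\sigma(\chi,S)$ in statement (ii) and the full family of elements $\epsilon_\sigma(\chi,S)$ in Conjecture~\ref{chinlike}, exploiting that everything is governed by the single idempotent $e_{\sigma,\chi}$ and that, by Lemma~\ref{chispecial}(i), the coefficients $\chic{\chi}{\sigma',\sigma}$ vanish unless $\sigma'$ lies in the $G$-orbit of $\sigma$. First I would reduce to the case $L_S'(r,\check\chi)\neq 0$: if $L_S'(r,\check\chi)=0$ then, as in the proof of Proposition~\ref{func props}(iii), Conjecture~\ref{chinlike} holds with $\epsilon_\sigma(\chi,S)=0$, and statement (ii) holds with $\beta_\sigma(\chi,S)=0$ (note $e_{\sigma,\chi}\cdot 0 = 0$), so both are trivially true and equivalent. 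Also, by Proposition~\ref{func props}(i), I may and do take $S=S_\infty$ in the argument; the general $S$ follows formally. Finally, if $\sigma\notin\embsp{r,\chi}{F}$ then by Lemma~\ref{chispecial}(i) all $\chic{\chi}{\sigma',\sigma}=0$ and by Lemma~\ref{chispecial}(ii) $e_{\sigma,\chi}=0$, so again both statements hold trivially; hence I may assume $\sigma\in\embsp{r,\chi}{F}$, in which case $\chic{\chi}{\sigma,\sigma}\in\{1,2\}$ is in particular nonzero.

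For the implication (i)$\Rightarrow$(ii), suppose Conjecture~\ref{chinlike} holds for $\sigma$, and set $\beta_\sigma(\chi,S)$ to be the image of $\epsilon_\sigma(\chi,S)$ in $\mathcal{O}\otimes K_{1-2r}(F)_{\rm tf}$, multiplied by the idempotent $e_{\sigma,\chi}$. By Proposition~\ref{func props}(iv)(b) we already know $\overline{\epsilon}_\sigma(\chi,S) = e_\chi\overline{\epsilon}_\sigma(\chi,S)$ and, when $\sigma(k)\subset\R$, $\tau_\sigma(\overline{\epsilon}_\sigma(\chi,S)) = (-1)^r\overline{\epsilon}_\sigma(\chi,S)$; the latter says $\frac12(1+(-1)^r\tau_\sigma)$ acts as the identity, so in all cases $e_{\sigma,\chi}\overline{\epsilon}_\sigma(\chi,S) = \overline{\epsilon}_\sigma(\chi,S)$. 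Thus the passage to $\beta_\sigma$ does not lose information, and the idempotence condition $e_{\sigma,\chi}\beta_\sigma(\chi,S)=\beta_\sigma(\chi,S)$ holds. The regulator identity in (ii) is then just the $\sigma'=\sigma$ special case of \eqref{conj eq}, using that $\beta_\sigma$ and $\overline{\epsilon}_\sigma$ have the same image under $\reg_{1-r,\sigma}$ since they differ only by torsion and by applying $e_{\sigma,\chi}$ (which, as just noted, fixes $\overline{\epsilon}_\sigma$). The only subtlety is checking $\beta_\sigma(\chi,S)$ genuinely lies in $\mathcal{O}\otimes K_{1-2r}(F)_{\rm tf}$ rather than merely in $E\otimes K_{1-2r}(F)_{\rm tf}$: here one uses that $e_{\sigma,\chi}$ has coefficients in $\Q(\chi)\subseteq E$ (when $\sigma(k)\subset\R$ one needs $\frac12(1+(-1)^r\tau_\sigma)\in\frac12\Z[G]$ together with the fact that $\tau_\sigma$ acts, on $\overline{\epsilon}_\sigma$, with the sign making this integral), combined with the identity $e_{\sigma,\chi}\overline{\epsilon}_\sigma = \overline{\epsilon}_\sigma$ which shows the product already has coefficients in $\mathcal{O}$.

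For the converse (ii)$\Rightarrow$(i), given $\beta:=\beta_\sigma(\chi,S)$ as in (ii), define $\epsilon_\sigma(\chi,S)$ by first producing a candidate $\widetilde\epsilon := \chic{\chi}{\sigma,\sigma}^{-1}\sum_{g\in G} g(\beta)$ in $E\otimes K_{1-2r}(F)_{\rm tf}$, or rather one works directly with the formula and then clears denominators. The idea is that for any $\sigma'=g(\sigma)$ one has, using Remark~\ref{firstremark}(iii)-type manipulations, $\reg_{1-r,\sigma'}(g'(\beta)) = \reg_{1-r, g'^{-1}(\sigma')}(\beta)$, which is nonzero only when $g'^{-1}g(\sigma)$ is again $\sigma$, so that the only contributions come from $g' \in gD_\sigma$; summing these and using Lemma~\ref{chispecial}(iii), namely $\chic{\chi}{\sigma,\sigma}e_{\sigma,\chi}g^{-1}e_{\sigma,\chi} = \chic{\chi}{g(\sigma),\sigma}e_{\sigma,\chi}$ together with $e_{\sigma,\chi}\beta=\beta$, reproduces exactly the factor $\chic{\chi}{\sigma',\sigma}$ for embeddings $\sigma'$ in the $G$-orbit of $\sigma$. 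For $\sigma'$ of the form $\tau\circ g(\sigma)$ one uses $\reg_{1-r}\circ\tau = (-1)^r\reg_{1-r}$ to get the required $(-1)^r$ twist, and for $\sigma'$ outside $\{g(\sigma),\tau g(\sigma)\}$ both sides vanish (the left because $\beta = e_{\sigma,\chi}\beta$ is supported on the $\chi$-isotypic, $\sigma$-compatible part, the right by definition of $\chic{\chi}{\sigma',\sigma}$). To land in $\mathcal{O}\otimes K_{1-2r}(F)$ rather than the torsion-free quotient, one lifts using that $K_{1-2r}(F)$ is finitely generated (Borel \cite{Borel74}) so $K_{1-2r}(F)\to K_{1-2r}(F)_{\rm tf}$ is surjective, and absorbs the denominator $\chic{\chi}{\sigma,\sigma}\in\{1,2\}$ — in the case $\chic{\chi}{\sigma,\sigma}=2$ one must check that the relevant element is already divisible by $2$ in the appropriate sense, which is exactly the content of Lemma~\ref{chispecial} applied in case~(ii) of Remark~\ref{chin=2}, where $e_{\sigma,\chi}$ has rank one and the sum over $D_\sigma$ contributes the factor $2$ automatically.

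The main obstacle I anticipate is the bookkeeping in (ii)$\Rightarrow$(i) that simultaneously tracks three things: the orbit structure of $\sigma'$ under $G\times G_{\C/\R}$ acting on $\embsp{r,\chi}{F}$, the interplay of $e_{\sigma,\chi}$ with the group ring elements $g^{-1}$ via Lemma~\ref{chispecial}(iii), and the passage from $K_{1-2r}(F)_{\rm tf}$ back to $K_{1-2r}(F)$ without introducing spurious denominators. Uniqueness modulo torsion (Remark~\ref{firstremark}(iv), via Proposition~\ref{func props}(iv)(a), which in turn rests on the non-degeneracy of the collection $\{\reg_{1-r,\sigma'}\}_{\sigma'}$ from Borel) guarantees that the $\epsilon_\sigma(\chi,S)$ so constructed is the ``right'' one up to the ambiguity allowed by the conjecture, so there is no circularity. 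Everything else — the reductions, the (i)$\Rightarrow$(ii) direction, and the verification of the regulator identities — is a routine unwinding of definitions combined with the already-established Propositions~\ref{func props} and~\ref{extraprop} and Lemma~\ref{chispecial}.
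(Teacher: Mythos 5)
Your direction (i)$\Rightarrow$(ii) is essentially the paper's: set $\beta_\sigma(\chi,S)=\overline{\epsilon}_\sigma(\chi,S)$ and invoke Proposition~\ref{func props}(iv)(b) to get $e_{\sigma,\chi}$-invariance; that part is fine (the paper does not even multiply by $e_{\sigma,\chi}$ first, since (iv)(b) already gives that the idempotent acts as the identity). The reductions to $L'_S(r,\check\chi)\neq0$ and $\sigma\in\embsp{r,\chi}{F}$ are also correct and match the paper.

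The converse (ii)$\Rightarrow$(i) has a genuine gap. Your proposed candidate $\widetilde\epsilon=\chic{\chi}{\sigma,\sigma}^{-1}\sum_{g\in G}g(\beta)$ cannot work: since $\beta=e_\chi\beta$ and $e_\chi$ is the central idempotent of a nontrivial irreducible character, one has $(\sum_{g\in G}g)e_\chi=0$, so $\widetilde\epsilon=0$. Moreover the supporting claim that $\reg_{1-r,g'^{-1}(\sigma')}(\beta)$ ``is nonzero only when $g'^{-1}(\sigma')=\sigma$'' is false in general; the idempotents $e_{\sigma,\chi}$ and $e_{h(\sigma),\chi}$ are typically rank-one but non-orthogonal idempotents in $M_{\chi(1)}(E)$, so $e_{h(\sigma),\chi}\beta$ (and hence $\reg_{1-r,h(\sigma)}(\beta)$) need not vanish for $h\notin D_\sigma$. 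No averaging over $G$ is needed or possible here.

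The paper's route is simpler and avoids both pitfalls: one takes $\epsilon_\sigma(\chi,S)$ to be \emph{any} lift of $\beta_\sigma(\chi,S)$ to $\mathcal{O}\otimes K_{1-2r}(F)$ (possible since $\reg_{1-r,\sigma'}$ factors through the torsion-free quotient), and then verifies \eqref{conj eq} by proving the scalar identity
\[
  \chic{\chi}{\sigma,\sigma}\,\reg_{1-r,\sigma'}(\beta_\sigma(\chi,S)^\gamma)
 = \gamma(\chic{\chi}{\sigma',\sigma})\,\reg_{1-r,\sigma}(\beta_\sigma(\chi,S)^\gamma)
\]
for all $\sigma'$ and $\gamma$, which suffices because $\chic{\chi}{\sigma,\sigma}\in\{1,2\}$ is nonzero. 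That identity is established case by case: for $\sigma'\notin\embsp{r,\chi}{F}$ both sides vanish (the left using $\reg_{1-r,\sigma'}(x)=\reg_{1-r,\sigma'}(e_{\sigma',\chi^\gamma}x)$ and $e_{\sigma',\chi^\gamma}=0$), and for $\sigma'=g(\sigma)$ or $\sigma'=\tau\circ g(\sigma)$ one applies Lemma~\ref{chispecial}(iii) to $g^{-1}\beta^\gamma=e_{\chi^\gamma}g^{-1}e_{\sigma,\chi^\gamma}\beta^\gamma$ together with $\reg_{1-r,g(\sigma)}(\beta^\gamma)=\reg_{1-r,\sigma}(g^{-1}\beta^\gamma)$. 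Your instinct to use Lemma~\ref{chispecial}(iii) and the orbit structure of $\sigma'$ is right, but it must be applied to compare $\reg_{1-r,\sigma'}$ with $\reg_{1-r,\sigma}$ directly for the same element $\beta^\gamma$, not to build a new element by summing translates.
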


\begin{proof}
That (i) implies (ii) can be seen from Proposition~\ref{func props}(iv)(b): we may simply set
$\beta_\sigma(\chi,S) := \overline{\epsilon}_\sigma(\chi,S)$.
For the converse, we note that Conjecture~\ref{chinlike} always holds
if $ L_S'(r,\check\chi)=0 $, or if $ L_S'(r,\check\chi)\ne0 $
and $ \sigma \notin\embsp{r,\chi}{F} $,
so we assume that both $ L_S'(r,\check\chi)\ne0 $ and $ \sigma \in \embsp{r,\chi}{F} $.
Then $ \chic{\chi}{\sigma,\sigma}$ is equal to either $1 $ or $ 2 $ and so
it will suffice to show that
\begin{equation}\label{easyeq}
 \chic{\chi}{\sigma,\sigma} \reg_{1-r,\sigma'}(\beta_\sigma(\chi,S)^\gamma)
=
 \gamma(\chic{\chi}{\sigma',\sigma}) \reg_{1-r,\sigma}(\beta_\sigma(\chi,S)^\gamma)
\end{equation}
for all $ \sigma' \in\emb{F} $ and all $ \gamma\in\Gamma $,
because then~\eqref{conj eq} holds
for any lift $ \epsilon_\sigma(\chi,S)$ of $\beta_\sigma(\chi,S) $
to $ \mathcal{O}\otimes K_{1-2r}(F) $.

If $\sigma'\notin \embsp{r,\chi}{F},$ then $\chic{\chi}{\sigma',\sigma}=0$ by Lemma~\ref{chispecial}(i).
On the other hand, for $ \gamma \in \Gamma $ we have $ \beta_\sigma(\chi,S)^\gamma = e_{\chi^\gamma} \beta_\sigma(\chi,S)^\gamma $ because
 $ \beta_\sigma(\chi,S) = e_\chi \beta_\sigma(\chi,S) $, and
\begin{equation*}
 \reg_{1-r,\sigma'}(\beta_\sigma(\chi,S)^\gamma)
=
 \reg_{1-r,\sigma'}(\tfrac{1+(-1)^r\tau_{\sigma'}}{2}\beta_\sigma(\chi,S)^\gamma)
=
 \reg_{1-r,\sigma'}(e_{\sigma',\chi^\gamma}\beta_\sigma(\chi,S)^\gamma)
\,,
\end{equation*}
which is trivial by Lemma~\ref{chispecial}(ii) because $ \embsp{r,\chi}{F} = \embsp{r,\chi^\gamma}{F} $,
 thus establishing~\eqref{easyeq} for such $ \sigma' $.

If now $\sigma'\in \embsp{r,\chi}{F} $, then we distinguish
three cases: (a) $\sigma(k)\not\subseteq\R$ and $\sigma'=g(\sigma)$ for some $g$ in $G$;
(b) $\sigma(k)\not\subseteq\R$ and $\sigma'=\tau\circ g(\sigma)$ for some $g$ in $G$;
(c) $\sigma(k)\subset\R$ and $\sigma'=g(\sigma)$ for some $g$ in $G$.
In case (a) $e_{\sigma,\chi}$ is equal to the central idempotent $e_\chi$,
$\chic{\chi}{\sigma,\sigma}=\chi(1)=1$,
and for $\gamma\in \Gamma$ we have
$ g^{-1}\beta_\sigma(\chi,S)^\gamma= e_{\chi^\gamma}g^{-1}e_{\chi^\gamma}\beta_\sigma(\chi,S)^\gamma
 = \gamma(\chic{\chi}{g(\sigma),\sigma})\beta_\sigma(\chi,S)^\gamma $ by Lemma~\ref{chispecial}(iii).
Now~\eqref{easyeq} follows because
$ \reg_{1-r,g(\sigma)}(\beta_\sigma(\chi,S)^\gamma) = \reg_{1-r,\sigma}(g^{-1}\beta_\sigma(\chi,S)^\gamma) $.
Case (b) is dealt with similarly using the equalities $\reg_{1-r,\sigma'} = (-1)^r \reg_{1-r,g(\sigma)}$
and $ \chic{\chi}{\sigma',\sigma} = (-1)^r \chic{\chi}{g(\sigma),\sigma} $.
In case (c) one has $ \chic{\chi}{\sigma,\sigma} = 2 $
and the left hand side of~\eqref{easyeq} is equal to
\begin{equation*}
 2 \reg_{1-r,\sigma}(g^{-1}\beta_\sigma(\chi,S)^\gamma)
=
 \reg_{1-r,\sigma}((1+(-1)^r\tau_\sigma)g^{-1}\beta_\sigma(\chi,S)^\gamma)
\,.
\end{equation*}
Using that
$
 g^{-1}\beta_\sigma(\chi,S)^\gamma
=
 g^{-1} e_{\sigma,\chi^\gamma}\beta_\sigma(\chi,S)^\gamma
=
 e_{\chi^\gamma} g^{-1} e_{\sigma,\chi^\gamma}\beta_\sigma(\chi,S)^\gamma
$
for the central idempotent $e_{\chi^\gamma}$
we see from Lemma~\ref{chispecial}(iii) that~\eqref{easyeq}
again holds.
\end{proof}

\subsection{}
We conclude this section with some more results of independent
interest, which will also be used in the sequel.

\begin{lemma}\label{ord=mult}
If the irreducible character $ \chi $ is realisable over $ E $,
then the multiplicity of the corresponding representation in $ E\otimes K_{1-2r}(F) $
 is equal to the order of vanishing of $ L(s,\check\chi) $ at~$ s=r $.
\end{lemma}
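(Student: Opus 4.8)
The plan is to relate both sides of the asserted equality to the rank of an explicit eigenspace under the semilinear $\Gamma$-action on $E\otimes K_{1-2r}(F)$, and then to invoke Borel's computation of the ranks of higher $K$-groups together with the description of the order of vanishing of Artin $L$-functions at negative integers in terms of Gamma-factors (already used in Remark~\ref{chin=2}).

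First I would recall that, since $r<0$, Borel's theorem gives $\dim_\Q(\Q\otimes K_{1-2r}(F)) = r_1(F) + r_2(F)$ if $r$ is odd and $= r_2(F)$ if $r$ is even, where $r_1(F)$ and $r_2(F)$ are the numbers of real and complex places of $F$; more precisely, as a $\Q[G_{F/\Q}]$-module (or at least a $\Q[G]$-module via $G \subseteq G_{F/\Q}$), $\Q\otimes K_{1-2r}(F)$ is isomorphic to the $(-1)^r$-eigenspace of complex conjugation acting on $\Q[\emb{F}]$ — this is exactly the content of the regulator map being injective (Borel \cite{Borel74}) with image a full lattice in the corresponding Deligne cohomology, combined with the $G_{\C/\R}$-equivariance recalled after \eqref{reg def} (namely $\reg_{1-r}\circ\tau = (-1)^r\reg_{1-r}$) and the $G$-equivariance of the family $\{\reg_{1-r,\sigma}\}_\sigma$. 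Thus, after extending scalars to $E$ and picking off the $\chi$-isotypic component via $e_\chi$, the multiplicity of the representation with character $\chi$ in $E\otimes K_{1-2r}(F)$ equals the multiplicity of $\chi$ in the $G$-representation $E\otimes_{\Q}(\Q[\emb{F}])^{(-1)^r\tau}$, which one computes by the standard inner-product formula to be $\sum_{\sigma}\langle \text{(character of } D_\sigma \text{ on the } (-1)^r\text{-eigenline)}, \check\chi\rangle$ summed over archimedean places, i.e. it is built out of the numbers $\tfrac12(\chi(1)+(-1)^r\chi(\tau_\sigma))$ for real $\sigma$ and $\chi(1)$ for complex $\sigma$.

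Next I would compute the order of vanishing of $L(s,\check\chi)$ at $s=r$ from the functional equation and the shape of the archimedean $\Gamma$-factors, exactly as indicated in the reference to \cite[Chap.~VII, \S12]{Neu} in Remark~\ref{chin=2}: at a negative integer the order of vanishing of $L(s,\check\chi)$ is precisely the number $\mathrm{ord}_{s=r}L(s,\check\chi) = \sum_{v\mid\infty} a_v(\check\chi,r)$ where $a_v$ is $\chi(1)$ at a complex place, and at a real place is the multiplicity of the appropriate sign-eigenspace of Frobenius — and this turns out to be the \emph{same} expression in the $\chi(\tau_\sigma)$'s as the $K$-theoretic multiplicity just computed. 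So the proof reduces to checking that these two combinatorial expressions coincide term by term over the archimedean places, which is a direct (if slightly fiddly) matching of cases: a complex place of $k$ contributes $\chi(1)$ to both sides; a real place $\sigma$ of $k$ contributes $\tfrac12(\chi(1)+(-1)^r\chi(\tau_\sigma))$ to both — on the $K$-theory side because that is the dimension of the $(-1)^r$-eigenspace of $\tau_\sigma$ in the $\chi(1)$-dimensional representation space, and on the $L$-function side because that is the order of vanishing coming from the corresponding $\Gamma_\R$-factors.

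The main obstacle, and the only genuinely substantive point, is justifying the $G$-equivariant (not just $\Q$-dimensional) form of Borel's theorem that identifies $\Q\otimes K_{1-2r}(F)$ with the $(-1)^r$-part of $\Q[\emb{F}]$ as $G$-modules; the rational rank statement alone is classical, but the equivariance needs the compatibility of the Beilinson regulator with the $G\times G_{\C/\R}$-action on $\emb{F}$, which is exactly why the setup in \S\ref{soc sect} is arranged as it is, and one should cite Borel \cite{Borel74} (together with the functoriality of $\reg_{1-r}$) for this. Everything after that — the character-theoretic multiplicity computation and the Gamma-factor bookkeeping — is routine and parallels Remark~\ref{chin=2}, so I would keep it brief, merely remarking that the two case-by-case tabulations agree.
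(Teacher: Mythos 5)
Your proposal is correct and follows essentially the same route as the paper: both hinge on the $G$-equivariant form of Borel's theorem to pass from $E\otimes K_{1-2r}(F)$ to the combinatorial $G$-module built from $\emb{F}$ (the $(-1)^r$-eigenspace for complex conjugation), and both then read off the order of vanishing from the $\Gamma$-factors as in Remark~\ref{chin=2}. The only cosmetic difference is that the paper keeps the $G$-invariant Borel pairing in hand (so the adjoint idempotent $\check\pi = e_{\check\chi}$ appears directly and no self-duality argument is needed), whereas you assert an isomorphism of $\Q[G]$-modules, which implicitly uses self-duality of the rational module $B_{-r}^{G_{\C/\R}}$; this is harmless since $\chi$ and $\check\chi$ agree on the decomposition elements $\tau_\sigma$ anyway.
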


\begin{proof} For each integer $m$ we define a $(G\times G_{\C/\R})$-module $B_{m} := \bigoplus_{\emb{F}}(2\pi i)^{-m}\Z$, where $G$ acts on $\emb{F}$ in the way specified just before Conjecture~\ref{chinlike} and $G_{\C/\R}$ acts diagonally (on both $\emb{F}$ and $(2\pi i)^{-m}\Z$). Then, according to Borel's theorem \cite{Borel74}, the $G$-invariant pairing
\begin{equation}\label{Borel pairing}  \Q\otimes B_{-r}^{G_{\C/\R}} \times \Q\otimes K_{1-2r}(F) \to \R \end{equation}
that is induced by mapping each element $ ((2\pi i)^r \sigma,\alpha) $ to $ (2\pi i)^r\reg_{1-r}(\sigma(\alpha)) $
is non-degenerate.

If we now extend coefficients to $ E $, then the non-degeneracy of (\ref{Borel pairing}) implies that for
any idempotent $ \pi $ in $ E[G] $ one has ${\rm dim}_E(\pi (E\otimes K_{1-2r}(F))) ={\rm dim}_E(\check\pi(E\otimes B_r))$, where $\check\pi$ is the image of $\pi$ under the $E$-linear anti-involution of $E[G]$ that inverts elements of $G$. Finally we note that if $ \pi =e_\chi$, so $ \check\pi = e_{\check\chi} $, then the description of the Gamma factors and functional equation
of $L(s,\check\chi)$ (as in \cite[Chap.\ VII, {\S}12]{Neu}) implies that $\chi(1)^{-1}{\rm dim}_E(\check\pi(E\otimes B_r))$ is equal to the vanishing order of $ L(s,\check\chi) $ at $ s=r $.
\end{proof}

\begin{remark}\label{useful remark}
There are several ways in which the observations made above are useful
when making numerical investigations of Conjecture~\ref{chinlike}.

(i)
If $L_S(r,\check\chi) = 0 \ne L_S'(r,\check\chi)$, then
by Proposition~\ref{func props}(iv)(b),
$\overline{\epsilon}_\sigma(\chi,S)$
belongs to the $\mathcal{O}$-sublattice $e_{\sigma,\chi}(\mathcal{O}\otimes K_{1-2r}(F)_{\rm tf})$ of
$E\otimes K_{1-2r}(F)$.  If $\sigma$ is in $\embsp{r,\chi}{F}$
then this $\mathcal{O}$-lattice has rank one by Lemma~\ref{ord=mult} and
the proof of Lemma~\ref{chispecial}(ii), and this provides a very strong restriction on where one searches to find $\overline{\epsilon}_\sigma(\chi,S)$.
This also applies to $\beta_\sigma(\chi,S)$ in Proposition~\ref{equivalences},
and implies that $ \beta_\sigma(\chi,S) $ is unique in this case.

(ii)
If $r$ is even, then Proposition~\ref{func props}(ii) and (the second part of) (iv)(b) combine to imply that the element $e|K_{1-2r}(F^{\ker(\chi)})_{\rm tor}|{\epsilon}_\sigma(\chi,S,d)$ belongs to the
image of $K_{1-2r}(F^{\ker(\chi)D_\sigma})$ in $K_{1-2r}(F)$, where $e\in \{1,2\}$ is the exponent of the
Tate cohomology group $\hat H^0(\ker(\chi)D_\sigma/\ker(\chi),K_{1-2r}(F^{\ker(\chi)}))$.  This observation
is useful because it can be computationally much easier to search for elements in $K_{1-2r}(F')$
for proper subfields $F'$ of $F$ rather than in $K_{1-2r}(F)$ itself.

(iii) The first observation made in the proof of Lemma \ref{ord=mult} implies that if $F$ has signature $[r_1,r_2]$, then the rank of $K_{1-2r}(F)$ is equal to
 $r_1+r_2$ if $r$ is even and to $r_2$ if $r$ is odd. This explicit (and well-known) formula will be useful in the sequel.

\end{remark}

\begin{remark}\label{QabFremark}
It is clear that any commutator in $ G_{\Q^c/\Q} $ acts trivially on $ \mu_a(F) = H^0(G_{\Q^c/F},\Q/\Z(a))$.
Hence one has $ \mu_a(F) = \mu_a(F\cap\Q^{\rm ab}) $,
where $ \Q^{\rm ab} $ is the maximal Abelian extension of $ \Q $.
If $ a $ is even, then one sees similarly that $ \Q^{\rm ab} $
may be replaced by its maximal totally real subfield.
\end{remark}

\section{Theoretical evidence}\label{db2}

In this section we describe the connection between Conjecture \ref{chinlike} and the modified Lichtenbaum-Gross Conjecture of Chinburg et al. \cite{ckps} and thereby deduce the validity of Conjecture \ref{chinlike} in some important special cases. We also show that the elements predicted by Conjecture \ref{chinlike} should encode information about the structure (as Galois modules) of certain even degree higher algebraic $K$-groups.

\subsection{}We first state the main result of this section (which will be proved in \S\ref{proof main}).

\begin{thm}\label{mainres}
We assume that $S$ contains $S_\infty$ and all places that ramify in $F/k$. We also assume that the modified Lichtenbaum-Gross Conjecture (see Conjecture~\ref{ssc} below) is valid for $\chi$.
\begin{itemize}
\item[(i)] Then Conjecture~\ref{chinlike} is valid.
\item[(ii)] Further, the element ${\rm Tr}_1(\epsilon_\sigma(\chi,S)) := \sum_{\gamma \in \Gamma}\epsilon_\sigma(\chi,S)^\gamma$ belongs to
$K_{1-2r}(F)$ and for all $\phi$ in $\Hom_G(K_{1-2r}(F),\Z[G])$ one has
\[ \frac{|G|^{2}}{\chi(1)}\phi({\rm Tr}_1(\epsilon_\sigma(\chi,S)))\in {\rm Ann}_{\Z[G]}(\bigoplus_{p\not= 2}H^2_{{\rm \acute et}}(\Spec (\mathcal{O}_{F,S}[\frac{1}{p}]),\Z_p(1-r))).\]
\end{itemize}
\end{thm}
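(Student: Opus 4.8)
The plan is to derive Theorem~\ref{mainres} from the modified Lichtenbaum--Gross Conjecture by passing through the reformulations already established. For part~(i), by Proposition~\ref{func props}(i)--(iii) we may assume $S=S_\infty$ together with the ramified places, that $E=\Q(\chi)$, and that $L'(r,\check\chi)\neq 0$; by Remark~\ref{last one} and Proposition~\ref{equivalences} it then suffices, for a single well-chosen $\sigma\in\embsp{r,\chi}{F}$, to produce $\beta_\sigma(\chi,S)\in e_{\sigma,\chi}(\mathcal{O}\otimes K_{1-2r}(F)_{\rm tf})$ realising the regulator identity in Proposition~\ref{equivalences}(ii). The idea is that the modified Lichtenbaum--Gross Conjecture of \cite{ckps} is precisely an equality, in a relative $K_0$-group or equivalently as an identity of ideals, relating the leading term $L^\ast_S(r,\chi)=L'_S(r,\check\chi)^\vee$ (up to the elementary conversions between $\chi$, $\check\chi$ and contragredients) to the determinant of the Beilinson regulator on the relevant $e_\chi$-component, with the error measured by the Euler characteristic of the relevant \'etale cohomology complex $R\Gamma_{\rm \acute et}(\Spec\mathcal{O}_{F,S}[\tfrac1p],\Z_p(1-r))$. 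Since $L(r,\check\chi)=0$ but $L'(r,\check\chi)\neq 0$, Lemma~\ref{ord=mult} tells us the $e_\chi$-isotypic part of $E\otimes K_{1-2r}(F)$ is free of rank $\chi(1)$ (equivalently $e_{\sigma,\chi}(\mathcal{O}\otimes K_{1-2r}(F)_{\rm tf})$ has rank one), so the regulator is a genuine nonzero number and the Lichtenbaum--Gross identity pins down the $\mathcal O$-lattice generated by the regulator image of a generator of $K_{1-2r}(\mathcal O_{F,S})_{\rm tf}$ up to the order of the relevant finite cohomology group. First I would unwind the formalism of \cite{ckps} to see that the predicted generator, scaled appropriately by that cohomology order (which is why the integral factor $|G|^2/\chi(1)$ and the annihilator of $H^2_{\rm \acute et}$ appear), has regulator exactly $\wr\chic{\chi}{\sigma,\sigma}L'_S(r,\check\chi^\gamma)$ after the $\gamma$-twist, which is the content of Proposition~\ref{equivalences}(ii).

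For part~(ii), the element ${\rm Tr}_1(\epsilon_\sigma(\chi,S))=\sum_{\gamma\in\Gamma}\epsilon_\sigma(\chi,S)^\gamma$ lies in $K_{1-2r}(F)$ by the same argument as in Proposition~\ref{extraprop}(i) (it is the image of $1\in\mathcal O$, which pairs integrally against $\mathcal{D}_E^{-1}$, noting ${\rm Tr}_{E/\Q}(\mathcal O)\subseteq\Z$). The assertion about $\phi({\rm Tr}_1(\epsilon_\sigma(\chi,S)))$ lying in the annihilator of $\bigoplus_{p\neq 2}H^2_{\rm \acute et}$ is, again, a reformulation of the \cite{ckps} conjecture read through the Quillen--Lichtenbaum/Beilinson--Soul\'e dictionary: the displayed \'etale cohomology group $H^2_{\rm \acute et}(\Spec\mathcal{O}_{F,S}[\tfrac1p],\Z_p(1-r))$ is (for $p$ odd) the torsion that obstructs the surjectivity of the Chern class map on $K_{2-2r}$, and the Lichtenbaum--Gross conjecture predicts that the index of the sublattice of $K_{1-2r}(F)$ spanned by the special elements inside the full $K$-group is governed by precisely this group, equivariantly over $\Z[G]$. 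Evaluating any $G$-homomorphism $\phi$ to $\Z[G]$ on a special element therefore lands in the ideal that kills this obstruction group, and the factor $|G|^2/\chi(1)$ is the universal denominator needed to clear the passage between the $e_\chi$-component (where the clean identity lives) and the integral $\Z[G]$-lattice $K_{1-2r}(F)$. Concretely, I would take a generator of the free rank-one $E$-module $e_\chi(E\otimes K_{1-2r}(F))$, express $\epsilon_\sigma(\chi,S)$ in terms of it using the regulator identity, and then track denominators through the idempotent $e_\chi=\chi(1)|G|^{-1}\sum\check\chi(g)g$ — the $|G|^{-1}$ and $\chi(1)$ there, squared once for the element and once for the map into $\Z[G]$, produce exactly $|G|^2/\chi(1)$.

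The main obstacle I expect is bookkeeping on two fronts. First, matching conventions: the conjecture in \cite{ckps} is phrased in terms of a leading term at $s=r$ of $L_S(s,\chi)$ and a ``Lichtenbaum complex'' or motivic cohomology complex, whereas Conjecture~\ref{chinlike} is phrased in terms of $L'_S(r,\check\chi^\gamma)$, the map $\reg_{1-r,\sigma'}$ with its specific $(2\pi i)^r$ normalisation, the sign rule in $\chic{\chi}{\sigma',\sigma}$, and the factor $\wr$; reconciling these — in particular checking that the ``algebraic factor'' predicted by Lichtenbaum--Gross is exactly $\wr\chic{\chi}{\sigma,\sigma}$ up to a unit, and that the $\embsp{r,\chi}{F}$-dichotomy from Remark~\ref{chin=2} is compatible with the shape of the regulator on the $e_\chi$-part — is where the real work lies. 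Second, the integral/$p$-adic comparison: one must invoke the (now proven, via Voevodsky--Rost and the Quillen--Lichtenbaum conjecture) identification of $p$-adic \'etale cohomology $H^i_{\rm \acute et}(\Spec\mathcal O_{F,S}[\tfrac1p],\Z_p(1-r))$ with $K$-theory with coefficients and hence with the torsion and the image of the integral regulator, and be careful that this is only clean for $p\neq 2$, which is exactly why the direct sum in the statement omits $p=2$. Once these dictionaries are in place, the deduction itself — choose a generator, read off $\epsilon_\sigma(\chi,S)$ from Proposition~\ref{equivalences}, clear denominators, apply $\phi$ — is essentially formal.
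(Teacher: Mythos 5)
Your high-level roadmap does match the paper's strategy: reduce via Proposition~\ref{func props} and Remark~\ref{last one}, reformulate Conjecture~\ref{chinlike} as Proposition~\ref{equivalences}(ii), and extract the element from the modified Lichtenbaum--Gross equality. But what you have written is a plan, not a proof, and the two places you wave your hands are exactly where the paper's genuine content lies.

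First, the integrality of the candidate element is not ``essentially formal''. The paper introduces auxiliary finitely generated $G$-modules $X_r$, $Y_r$ (Lemma~\ref{construction}), rewrites (\ref{LGC}) as an identity of $\mathcal O$-lattices in top exterior powers via Fitting-ideal bookkeeping (Proposition~\ref{firstver} and Lemma~\ref{stolen}), defines $\epsilon_\sigma(\psi,j)$ explicitly as in (\ref{epsilon-def}), identifies it with the element $u^j_{\{\sigma\},\psi}(d')$ of Proposition~\ref{firstver}(ii), and then invokes Jacobinski's theorem on central conductors of maximal orders (Lemma~\ref{jacres}(ii)) to clear the factor $\chi(1)^{-1}$ and land in $\mathcal O\otimes K_{1-2r}(F)_{\rm tf}$. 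None of that appears in your proposal; the cyclicity of $\mu_{1-r}(K)$ (to see that $d'=w_{1-r}(K)^{\chi(1)}$ really does lie in ${\rm Fit}_\mathcal O(\mu_{1-r}(K)^{j,\psi})$) is also essential and unmentioned. Without these inputs you have no actual element, only the assertion that one should exist.

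Second, your account of where $|G|^2/\chi(1)$ comes from is wrong. You say it arises from ``squaring'' the $|G|^{-1}$ and $\chi(1)$ inside $e_\chi = \chi(1)|G|^{-1}\sum\check\chi(g)g$, ``once for the element and once for the map.'' In the paper the two factors have completely different origins: one $|G|$ comes from the norm map $t^j(\Z[G],\psi)$ whose image is $|G|T^j_\psi$, the $\chi(1)^{-1}$ comes from Jacobinski's bound $|G|e_\psi\mathfrak M^j\subseteq\chi(1)\mathcal O[G]$ on central conductors, and the second $|G|$ enters only at the very end through the annihilation lemma \cite[Lem.\ 11.1.2(i)]{dals} applied after observing that $\Z'\otimes Y_{r,\rm tf}$ is $\Z'[G]$-projective (so that $\Z'\otimes (Y^1_{r,\chi^\gamma})_{\rm tor}$ identifies with $\Z'\otimes(Y_{r,\rm tor})^1_{\chi^\gamma}$). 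Your heuristic would predict the factor even in cases where the structure of $Y_r$ is trivial, which is the wrong dependence. If you followed your heuristic literally you would not be able to locate the image of $\phi$ inside the annihilator of the specific module $Y_{r,\rm tor}$; you would only get a crude denominator bound with no annihilation statement attached.

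In short: you have correctly identified the chain of reductions (which is genuine progress), but the theorem is proved precisely by the pieces you deferred as ``bookkeeping,'' and the one piece of bookkeeping you did attempt (the provenance of $|G|^2/\chi(1)$) is incorrect.
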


\begin{cor}\label{ab case}
Fix $k = \Q$ and assume that $\chi(1) =1$.
\begin{itemize}
\item[(i)] Then Conjecture~\ref{chinlike} is valid for any set $S$ as in Theorem~\ref{mainres}.
\item[(ii)] The assertion of Theorem~\ref{mainres}(ii) is also valid for any such set $S$.
\end{itemize}
\end{cor}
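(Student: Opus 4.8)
The plan is to deduce Corollary~\ref{ab case} from Theorem~\ref{mainres} by verifying that the hypothesis of Theorem~\ref{mainres} --- namely the validity of the modified Lichtenbaum-Gross Conjecture (Conjecture~\ref{ssc}) --- holds in the case at hand, and then simply quoting the two parts of the theorem. Since $k = \Q$ and $\chi(1) = 1$, the character $\chi$ is an abelian (in fact linear) character of $G_{F/\Q}$, and Proposition~\ref{func props}(ii) lets us replace $F$ by $F^{\ker(\chi)}$, so we may assume $F/\Q$ is a finite abelian extension and $\chi$ is a faithful linear character of $G_{F/\Q}$. For such characters the relevant special value $L'(r,\check\chi)$ is (up to the elementary Euler-factor adjustment of Proposition~\ref{func props}(i) that relates $L_S$ to $L_{S_\infty}$) a Dirichlet $L$-value, and the modified Lichtenbaum-Gross Conjecture of \cite[Conj.~6.12]{ckps} is known to hold in this abelian-over-$\Q$ setting. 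The first step, therefore, is to record precisely why Conjecture~\ref{ssc} holds here: it follows from the known cases of the equivariant Tamagawa number conjecture / main conjecture of Iwasawa theory for abelian fields (the relevant input being the results of Huber--Wildeshaus, Huber--Kings, and Burns--Greither, together with the interpretation of \cite{ckps} as a consequence of these), combined with the fact that, after Proposition~\ref{func props}(i), we may work with an arbitrary admissible $S$.

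Second, once Conjecture~\ref{ssc} is in hand for $\chi$, parts (i) and (ii) of Theorem~\ref{mainres} apply verbatim and yield exactly the two assertions of the corollary, \emph{provided} the set $S$ is as in Theorem~\ref{mainres} (contains $S_\infty$ and all ramified places). So for general admissible $S$ one first treats $S_\infty \cup S_{\rm ram}$ and then invokes Proposition~\ref{func props}(i) to pass to an arbitrary $S$ containing it; and since any admissible $S$ in the statement of Theorem~\ref{mainres} already contains $S_{\rm ram}$, nothing further is needed for part (i). For part (ii) one checks that the Euler-factor modification $\epsilon_\sigma(\chi,S') = \det_\C(1-\sigma_w {\rm N}v^{-r}\mid V_\chi^{I_w})\epsilon_\sigma(\chi,S)$ used in the proof of Proposition~\ref{func props}(i) is compatible with the annihilation statement, i.e.\ that multiplying ${\rm Tr}_1(\epsilon_\sigma(\chi,S))$ by this integral Euler factor preserves the containment in ${\rm Ann}_{\Z[G]}(\bigoplus_{p\ne 2}H^2_{\rm \acute et}(\cdots,\Z_p(1-r)))$; this is immediate since the annihilator is an ideal.

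The main obstacle is really the first step: pinning down exactly which published results establish the modified Lichtenbaum-Gross Conjecture of \cite{ckps} for linear characters of abelian extensions of $\Q$, and in what precise form (with which set $S$, with which normalisation of regulators and $\Z_p(1-r)$-cohomology), so that Theorem~\ref{mainres} can be invoked without circularity. Conceptually this is well understood --- it is the ``$\Q$-abelian'' case of Beilinson's conjecture for Dirichlet motives, due in essence to Beilinson and Deligne and refined equivariantly by Huber--Kings and Burns--Greither --- but one must be careful that the formulation in \cite[Conj.~6.12]{ckps} matches the formulation proved in those references, possibly after the reductions of \S2. By contrast, once that input is granted, the deduction of both parts of the corollary from Theorem~\ref{mainres} is formal.

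As an alternative (or complement) to the above, one may bypass \cite{ckps} entirely for part~(i) and construct the elements $\epsilon_\sigma(\chi,S)$ directly: by Remark~\ref{chin=2}(ii) the hypothesis $L(r,\check\chi)=0\ne L'(r,\check\chi)$ forces $\chi(\tau_\sigma) = (-1)^r(2-\chi(1)) = (-1)^r$ for all $\sigma$, so $F$ is totally real (if $r$ is even) or (if $r$ is odd) $\check\chi$ is odd and $F^{\ker(\chi)}$ has a unique complex place over each real place of $\Q$ --- wait, more precisely the sign condition just says $\tau_\sigma$ acts by $(-1)^{r+1}$ is excluded, so $\sigma \in \embsp{r,\chi}{F}$ for all $\sigma$ --- and then, as indicated in the remarks following Remark~\ref{chin=2}, the cyclotomic elements of Beilinson--Deligne--Soul\'e in $K_{1-2r}(\Q(\zeta_n))$ provide the required $\epsilon_\sigma(\chi,S)$ after applying the idempotent $e_{\sigma,\chi}$ and using the Deligne--Beilinson computation of their regulators in terms of $L'(r,\check\chi)$. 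This is in fact the route signposted by the phrase ``see also the proof of Corollary~\ref{ab case} below'' in the text, and it has the advantage of making the construction explicit; part~(ii) would then still be obtained by appeal to Theorem~\ref{mainres}(ii), whose hypothesis is nonetheless available in this case.
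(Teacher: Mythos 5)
Your proposal follows essentially the same route as the paper: reduce via Proposition~\ref{func props}(ii) to the case where $F=F^{\ker(\chi)}$ is abelian over $\Q$, establish Conjecture~\ref{ssc} for that situation, and then quote both parts of Theorem~\ref{mainres}. The one ingredient you were (by your own admission) vague about is exactly the ingredient the paper makes precise: the bridge from the known equivariant Tamagawa number conjecture to Conjecture~\ref{ssc} is Remark~\ref{ssc rem}(ii), which both identifies Conjecture~\ref{ssc} with the vanishing of $\rho^\chi_*\bigl(T\Omega(h^0(\Spec(F))(r),\Z[G])\bigr)$ and supplies the change-of-extension functoriality needed after the Proposition~\ref{func props}(ii) reduction; the ETNC for $(h^0(\Spec(F))(r),\Z[G])$ with $F/\Q$ abelian is then cited from Burns--Flach~\cite{bufl05} together with Flach~\cite{flach} for the $2$-primary corrections (rather than Huber--Wildeshaus/Huber--Kings/Burns--Greither, which are antecedents but not the reference used). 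Your digression about enlarging $S$ via Proposition~\ref{func props}(i) and checking the Euler-factor compatibility in part~(ii) is superfluous, since the corollary only claims the result for sets $S$ already admissible in Theorem~\ref{mainres}, to which the theorem applies directly; and the alternative construction via cyclotomic elements, while in the spirit of the remark preceding the corollary, is not what the paper's proof does.
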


\begin{proof} In view of Theorem~\ref{mainres}, Proposition~\ref{func props}(ii) and Remark~\ref{ssc rem}(ii) below it is enough to recall that if $F$ is an Abelian extension of $\Q$, then the equivariant Tamagawa number conjecture of \cite[Conj. 4(iv)]{bufl00} is valid for the pair $(h^0(\Spec (F))(r),\Z[G])$. Indeed, this case of the equivariant Tamagawa number conjecture is proved by Flach and the first author in \cite[Cor. 1.2]{bufl05} (with corrections to the $2$-primary part of the argument of
loc.\ cit.\ recently provided by Flach in \cite{flach}). \end{proof}

\begin{remark}\label{rubin remark}
We write $\mathcal{O}_{F,S}$ for the subring of $F$ comprising elements that are
integral at all places that do not lie above a place in $S$. Then it is conjectured by Quillen and Lichtenbaum that for all
odd primes $p$ the
natural Chern class homomorphism $\Z_p\otimes
K_{-2r}(\mathcal{O}_{F,S}) \rightarrow H^2_{{\rm \acute et}}(\Spec
(\mathcal{O}_{F,S}[\frac{1}{p}]),\Z_p(1-r))$ is bijective. In the case $r = -1$ this conjecture has been proved by Tate in \cite{tateK2}. (It is also known, by work of Suslin, that the Quillen-Lichtenbaum Conjecture is a consequence of a conjecture of Bloch and Kato relating Milnor $K$-theory to \'etale cohomology and it is widely believed that recent fundamental work of Voevodsky and Rost has led to a proof of this conjecture of Bloch and Kato.) Whenever the
 conjecture of Quillen and Lichtenbaum is valid the module $\bigoplus_{p\not= 2}H^2_{{\rm \acute et}}(\Spec
(\mathcal{O}_{F,S}[\frac{1}{p}]),\Z_p(1-r))$ can be replaced by
$\Z[\frac{1}{2}]\otimes K_{-2r}(\mathcal{O}_{F,S})$ in the statement
of Theorem~\ref{mainres}(ii). In the setting of Corollary~\ref{ab
case} we thereby obtain a rather striking analogue of the result of Rubin
in \cite[Th. (2.2) and the following Remark]{rubininv} concerning
the annihilation of ideal class groups in absolutely Abelian
fields.
\end{remark}

\begin{remark}\label{nickel remark} We recently learnt Nickel has shown that for each strictly negative integer $r$ the equivariant Tamagawa number conjecture for $(h^0({\rm Spec}(F))(r),\Z[G])$ implies (via the reinterpretation given in \cite[Prop. 4.2.6]{ltav}) that certain elements constructed from the leading term at $s=r$ of truncated Artin $L$-functions should belong to $\Z[G]$ and annihilate $\bigoplus_{p\not= 2}H^2_{{\rm \acute et}}(\Spec (\mathcal{O}_{F,S}[\frac{1}{p}]),\Z_p(1-r))$ (cf. \cite[Th. 4.1]{Nickel}). His prediction is however much less explicit than that given in Theorem \ref{mainres}(ii) above and it would be interesting to know if there is any direct link between them.
\end{remark}


\subsection{}
In this subsection we discuss some necessary algebraic preliminaries.

We fix $G$ and $\chi$ as in \S\ref{soc sect} and, following Proposition~\ref{func props}(iii), we set $E:= \Q(\chi)$.
 We write $\sum_{j = 1}^{j = \chi(1)}f^j_{\chi}$ for a decomposition of
$e_{\chi}$ as a sum of mutually orthogonal indecomposable idempotents in
$E[G]$. We write $\mathcal{O}$ for the ring of
algebraic integers in $E$ and for each $j$ we choose a maximal $\mathcal{O}$-order
${\mathfrak M}^j$ in $E[G]$ that contains $f^j_{\chi}$. For any character $\psi = \chi^\gamma$, with $\gamma\in \Gamma$, we set $e_\psi := \psi(1)|G|^{-1}\sum_{g\in G} \check\psi(g)g = (e_\chi)^\gamma$ and $f^j_{\psi} := (f^j_{\chi})^\gamma$ and define an $\mathcal{O}$-torsion-free right $\mathcal{O}[G]$-module $T^j_{\psi} := f^j_{\psi}({\mathfrak M}^j)^\gamma = (f^j_{\chi} {\mathfrak M}^j)^\gamma$. The associated right $E[G]$-module $V^j_{\psi} := E\otimes_\mathcal{O}T^j_{\psi}$ has character $\psi$. For any (left) $G$-module $M$ we define a module
$M^j[\psi] := T^j_{\psi}\otimes M$ upon which each $g$ in $G$ acts on the left
by $t\otimes m\mapsto tg^{-1}\otimes g(m)$ for each $t$ in $T^j_{\psi}$ and $m$ in $M$. Then there is a natural isomorphism of (left) $\mathcal{O}[G]$-modules
\begin{equation}\label{translate}
  M^j[\psi] \cong \Hom_{\mathcal{O}}(T^{j,*}_{\psi},\mathcal{O}\otimes M)
\end{equation}
where $G$ acts in the usual (diagonal) manner on the $\Hom$-set
and the module $T^{j,*}_{\psi} := \Hom_{\mathcal{O}}(T^j_{\psi},\mathcal{O})$ is endowed
with the natural left $G$-action and hence spans a left
$E[G]$-module of character $\psi$.

For any subgroup $J$ of $G$ we write $M^J$, resp. $M_J$, for
the maximal submodule, resp. quotient, of $M$ upon
which $J$ acts trivially. In particular we obtain a left, resp.
right, exact functor
 $M \mapsto M^{j,\psi}$, resp. $M \mapsto M^j_\psi$,
 from the category of left $G$-modules to the category of
$\mathcal{O}$-modules by setting $M^{j,\psi} := M^j[\psi]^G$ and $M^j_\psi
:= M^j[\psi]_G \cong T^j_\psi\otimes_{\Z [G]}M$. 
 By replacing $T^j_\psi$ by $\Z_p\otimes T^j_\psi$ we extend the notation $M^{j,\psi}$ and $M^j_\psi$ to $\Z_p[G]$-modules $M$ in the obvious way. We will often use the fact that the action of  $\sum_{g\in G}g$ on $M^j[\psi]$ induces a homomorphism of $\mathcal{O}$-modules $\,t^j(M,\psi):
  M^j_\psi\xrightarrow{}M^{j,\psi}$ that has finite kernel and finite cokernel.

The module $E[G]\otimes M$ has two commuting
left actions of $G$: the first via left multiplication on $E[G]$ and
the second such that each $g$ in $G$ sends $x\otimes m$ to
$xg^{-1}\otimes g(m)$ for $x$ in $E[G]$ and $m$ in $M$. We write
$(E[G]\otimes M)^{G,2}$ for the subset of
$E[G]\otimes M$ comprising elements that are invariant under the second action of $G$ and use the first action of $G$ on
$E[G]\otimes M$ to regard $(E[G]\otimes M)^{G,2}$ as an $E[G]$-module. If $M$ is finitely generated and
torsion-free, then we always regard $M^{j,\psi}$ as an $\mathcal{O}$-submodule of
$\mathcal{O}\otimes M$ by means of the identification described in the following result.

\begin{lemma}\label{jacres}\hfill
\begin{itemize}
\item[(i)] The $E$-linear map $E[G]\otimes M \to E\otimes M$ that sends
$g\otimes m$ to $g(m)$ for each $g$ in $G$ and $m$ in $M$
restricts to give an isomorphism of $E[G]$-modules $\iota:(E[G]\otimes
M)^{G,2} \cong
 E \otimes M$. 
 
\item[(ii)] Assume now that $M$ is finitely generated and
torsion-free and for each character $\psi = \chi^\gamma$, with $\gamma\in \Gamma$, set ${\rm pr}_\psi := \sum_{g \in G}\check\psi(g)g
\in \mathcal{O}[G]$. Then one has $\,{\rm pr}_\psi(\mathcal{O}\otimes M) \subseteq \psi(1)^{-1}\sum_{j=1}^{j=\psi(1)}\iota(M^{j,\psi})\subseteq \mathcal{O}\otimes M.$
\end{itemize}
\end{lemma}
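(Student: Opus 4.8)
The plan is to prove (i) by describing the invariants of the second $G$-action completely explicitly, and then to extract from this a formula for $\iota$ that makes both inclusions in (ii) fall out.

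For (i), I would first observe that the $E$-linear map $\phi\colon E[G]\otimes M\to E\otimes M$ determined by $x\otimes m\mapsto x\cdot(1\otimes m)$ (so $g\otimes m\mapsto g(m)$) is $E[G]$-linear for the first (left-multiplication) action on its source and the natural $E[G]$-action on $E\otimes M$, since $\phi(hg\otimes m)=h(g(m))$. Then I would compute the second-action invariants directly: writing a general element of $E[G]\otimes M$ uniquely as $\sum_{g\in G}g\otimes\mu_g$ with $\mu_g\in E\otimes M$, invariance under the second action of every $h\in G$ is equivalent to $\mu_g=g^{-1}\mu_1$ for all $g$; thus $(E[G]\otimes M)^{G,2}$ consists exactly of the elements $\sum_g g\otimes g^{-1}\mu$ with $\mu\in E\otimes M$, and $\mu\mapsto\sum_g g\otimes g^{-1}\mu$ is an $E$-linear bijection onto it. Since the image of $\sum_g g\otimes g^{-1}\mu$ under $\phi$ is $\sum_g g\cdot(g^{-1}\mu)=|G|\mu$ and $|G|\in E^\times$, the restriction $\iota$ of $\phi$ is the asserted $E[G]$-isomorphism. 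For later use I would also record the resulting formula: if $M$ is $\Z$-free on $\{m_l\}$ and $\xi=\sum_l\theta_l\otimes m_l\in(E[G]\otimes M)^{G,2}$ with $\theta_l\in E[G]$, then $\iota(\xi)=|G|\sum_l\varepsilon(\theta_l)m_l$, where $\varepsilon\colon E[G]\to E$ sends $\sum_g a_g g$ to $a_1$. (In particular $\iota$, and hence the identification of $M^{j,\psi}$ with $\iota(M^{j,\psi})\subseteq E\otimes M$, becomes entirely explicit.)

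For the first inclusion in (ii), I would use $\mathrm{pr}_\psi=|G|\psi(1)^{-1}e_\psi=|G|\psi(1)^{-1}\sum_j f^j_\psi$ together with the fact that $f^j_\psi x\in T^j_\psi$ for every $x\in(\mathfrak{M}^j)^\gamma$ — in particular $f^j_\psi h\in T^j_\psi$ for $h\in G$, since $\Z[G]\subseteq(\mathfrak{M}^j)^\gamma$. For $m\in M$ the element $\sum_{h\in G}f^j_\psi h\otimes h^{-1}m$ then lies in $M^j[\psi]=T^j_\psi\otimes M$ and is fixed by the $G$-action $t\otimes m'\mapsto tg^{-1}\otimes gm'$, hence lies in $M^{j,\psi}$; and $\iota$ sends it to $\sum_h f^j_\psi\bigl(h(h^{-1}m)\bigr)=|G|f^j_\psi(m)$. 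As $\iota(M^{j,\psi})$ is an $\mathcal{O}$-module, this gives $|G|f^j_\psi(\mathcal{O}\otimes M)\subseteq\iota(M^{j,\psi})$; summing over $j$ and using the identity for $\mathrm{pr}_\psi$ yields $\mathrm{pr}_\psi(\mathcal{O}\otimes M)\subseteq\psi(1)^{-1}\sum_j\iota(M^{j,\psi})$.

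The main obstacle is the second inclusion, $\psi(1)^{-1}\sum_j\iota(M^{j,\psi})\subseteq\mathcal{O}\otimes M$, and it is here that the maximality of the orders $(\mathfrak{M}^j)^\gamma$ is needed. Given $\xi_j\in M^{j,\psi}$, I would write $\xi_j=\sum_l\theta_{l,j}\otimes m_l$ with $\theta_{l,j}\in T^j_\psi$ (possible because $M^{j,\psi}\subseteq T^j_\psi\otimes M$ and $T^j_\psi$ is stable under right multiplication by $G$); by the formula above, $\psi(1)^{-1}\sum_j\iota(\xi_j)=\sum_l\bigl(|G|\psi(1)^{-1}\sum_j\varepsilon(\theta_{l,j})\bigr)m_l$, so it suffices to show $|G|\psi(1)^{-1}\varepsilon(t)\in\mathcal{O}$ for every $t\in T^j_\psi$. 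Now $|G|\varepsilon(t)$ is the trace of $t$ on the regular representation of $E[G]$, and since $t\in T^j_\psi\subseteq e_\psi(\mathfrak{M}^j)^\gamma$ lies in the simple block $e_\psi E[G]\cong M_{\psi(1)}(E)$, this trace equals $\psi(1)$ times the reduced trace $\mathrm{trd}_\psi(t)$; moreover $e_\psi(\mathfrak{M}^j)^\gamma$ is a maximal $\mathcal{O}$-order in $e_\psi E[G]$, hence an endomorphism ring of an $\mathcal{O}$-lattice, so $\mathrm{trd}_\psi(t)\in\mathcal{O}$. Therefore $|G|\psi(1)^{-1}\varepsilon(t)=\mathrm{trd}_\psi(t)\in\mathcal{O}$, which completes (ii) and in particular shows $\iota(M^{j,\psi})\subseteq\mathcal{O}\otimes M$. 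I expect the only really delicate points to be the bookkeeping that identifies $|G|\varepsilon(\cdot)$ with the regular trace and, on $e_\psi E[G]$, with $\psi(1)\,\mathrm{trd}_\psi(\cdot)$ — the multiplicity $\psi(1)$ of the $\psi$-block in the regular representation being exactly what renders the factor $\psi(1)^{-1}$ harmless — and the check that each $\theta_{l,j}$ genuinely lies in $T^j_\psi$ and not merely in $e_\psi E[G]$.
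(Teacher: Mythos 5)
Your argument for part (i) and for the first inclusion in part (ii) is essentially the same as the paper's: the paper establishes that $(E[G]\otimes M)^{G,2}$ is spanned by the elements $\sum_g xg^{-1}\otimes g(m)$ by invoking the cohomological triviality of the uniquely divisible module $E[G]\otimes M$, whereas you compute the invariants directly in a basis, but the conclusion and its use are identical. The second inclusion is where you genuinely diverge. The paper sets $N=\mathcal{O}\otimes M$, uses the duality $N=\{x\in E\otimes_{\mathcal O}N:\theta(x)\in\mathcal{O}[G]\ \forall\theta\in N^*\}$ to reduce the inclusion $\iota(M^{j,\psi})\subseteq n(\mathcal{O}\otimes M)$ to the statement $\Z[G]^{j,\psi}\subseteq n\mathcal{O}[G]$, computes $\Z[G]^{j,\psi}=|G|T^j_\psi$ via cohomological triviality of $\Z[G]$, and then cites Jacobinski's formula for the central conductor of $\mathfrak{M}^j$ in $\mathcal{O}[G]$ to get $|G|e_\psi\mathfrak{M}^j\subseteq n\mathcal{O}[G]$. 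You instead record the coordinate formula $\iota(\xi)=|G|\sum_l\varepsilon(\theta_l)m_l$, reduce the problem to $|G|\psi(1)^{-1}\varepsilon(t)\in\mathcal{O}$ for $t\in T^j_\psi$, and identify $|G|\varepsilon(t)$ as the regular-representation trace, hence $\psi(1)\cdot\mathrm{trd}_\psi(t)$ on the block $e_\psi E[G]\cong M_{\psi(1)}(E)$, so that the required integrality is just that of the reduced trace of an element of an $\mathcal{O}$-order. Both routes are correct. Your version is more self-contained and elementary: it avoids the Jacobinski citation (and indeed maximality of $\mathfrak{M}^j$ is not even needed for your trace step, only that $T^j_\psi$ lies in some $\mathcal{O}$-order), at the modest cost of a hands-on computation of $\iota$ in a $\Z$-basis of $M$; the paper's version is slightly slicker but leans on a non-trivial external result about conductors of group rings.
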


\begin{proof} Since $E[G]\otimes M$ is uniquely divisible it is cohomologically trivial with respect to any action of $G$. The
 submodule $(E[G]\otimes
M)^{G,2}$ is thus equal to the $E$-linear span of elements of the form $(\sum_{g \in G}g)(x\otimes m) = \sum_{g \in G}xg^{-1}\otimes g(m)$ with $x$ in $E[G]$ and $m$ in $M$. Using this fact it is straightforward to check that the map $\iota$ is an isomorphism of $E[G]$-modules, as required to prove claim (i).

We next set $n := \psi(1)$ and note that ${\rm pr}_\psi =
n^{-1}|G|e_{\psi}$. Then the first inclusion in claim (ii) is true because for each $m$ in $M$ one has
\begin{alignat*}{1}
 n\cdot{\rm pr}_\psi(1\otimes m) & = |G|e_{\psi}(1\otimes m) = |G|\sum_{j = 1}^{j =n}f_{\psi}^j(1\otimes m) \\
 & = \sum_{j = 1}^{j =n}\iota(\sum_{g \in G}f_{\psi}^j (g^{-1}\otimes g(m)) \in \sum_{j = 1}^{j =n}\iota(M^{j,\psi})
\,.
\end{alignat*}

To prove the second claimed inclusion it suffices to show
$\iota(M^{j,\psi})\subseteq n(\mathcal{O}\otimes M)$ for each index
$j$. To do this we set $N:= \mathcal{O}\otimes M$ and $N^* :=
\Hom_{\mathcal{O}[G]}(N,\mathcal{O}[G])$. Then, as $N$ is finitely
generated and torsion-free, one has $N = \{ x \in
E\otimes_\mathcal{O} N: \theta(x) \in \mathcal{O} [G] \hbox{ for all
} \theta \in N^*\}.$ Thus it suffices to show that
$\theta(M^{j,\psi}) \subseteq n\mathcal{O} [G]$ for all $\theta \in
N^*$. But $\theta(M^{j,\psi}) \subseteq \Z [G]^{j,\psi}$ and, since
$\Z [G]$ is cohomologically trivial and $\Z[G]^j_\psi = T^j_\psi$,
one has $\Z [G]^{j,\psi} = {\rm im}(t^j(\Z [G],\psi)) =
|G|T^j_\psi.$ Thus it suffices to note that $|G|T^j_\psi =
|G|f^j_{\psi}\mathfrak{M}^j\subseteq |G|e_{\psi}\mathfrak{M}^j
\subseteq n\mathcal{O} [G]$, where the latter inclusion follows from
Jacobinski's description in \cite{Jac} of the central conductor of
$\mathfrak{M}^j$ in $\mathcal{O}[G]$ (see also \cite[Th.
(27.13)]{curtisr}).
\end{proof}

For any $G$-module, resp. $\mathcal{O}$-module, $M$ we write
$M_{\rm tf}$ for the image of $M$ in $\Q\otimes M$, resp.
$E\otimes_{\mathcal{O}}M$. We often identify $M_{\rm tf}$ with the quotient $M/M_{\rm tor}$ in the natural way. 
If $M$ is a $G$-module, then for each $m$ in $M$ we write $f^j_{\psi}(m)$ for the image of
$f^j_{\psi}\otimes_{\Z [G]}m \in M^j_{\psi}$ in
$M^j_{\psi,{\rm tf}}$.

\subsection{} In this subsection we recall the modified Lichtenbaum-Gross Conjecture of Chinburg, Kolster, Pappas and Snaith. 
 Since we regard $F/k$ as fixed we set
$C_r := K_{1-2r}(F)$ and also recall the $G\times G_{\C/\R}$-modules $B_m$ that are defined in the proof of Lemma \ref{ord=mult}. We note, in particular, that the non-degeneracy of the pairing (\ref{Borel pairing}) combines with the natural isomorphism of $G\times G_{\C/\R}$-modules $\Hom_\Z(B_{-r},\Z) \cong B_{r}$ to imply that the map $\bigoplus_{\sigma\in \emb{F}}\reg_{1-r,\sigma}$ induces a canonical isomorphism of $\R [G]$-modules
%
\begin{equation*}
 \Regr: \R \otimes C_r \xrightarrow{\sim}\R \otimes B_r^{G_{\C/\R}}.
\end{equation*}
The bijectivity of $\Regr$ then combines with Deuring's Theorem (cf.
\cite[\S6, Exer. 6]{curtisr}) to imply the existence of
 an (in general non-canonical) isomorphism of $\Q[G]$-modules $\,\phi: \Q \otimes B^{G_{\C/\R}}_r \xrightarrow{\sim} \Q \otimes C_r.$ For any such $\phi$, any index $j$ and any character $\psi = \chi^\gamma$, with $\gamma \in \Gamma$, we set
\[ R_r^\phi(\psi) := \mathrm{det}_{\C}[(\C \otimes_\R\Regr)\circ (\C\otimes_\Q\phi)\mid (\C\otimes_E
V^j_{\psi})\otimes _{\Z[G ]}B_r^{G_{\C/\R}}] \in \C^\times.\]
(This element doesn't depend on $j$ because each $E[G]$-module $V_\psi^j$ has the same character $\psi$.)

We now fix a finite set $S$ of places of $k$ that contains $S_\infty$ and all places that ramify in $F/k$. Then the
`modified Lichtenbaum-Gross Conjecture' predicts an explicit Euler
characteristic formula for the $\mathcal{O}$-module generated by $
L_S^*(r,\check\psi)/R^\phi_r(\psi)$ where $L_S^*(r,\check\psi)$ is
the leading non-zero coefficient in the Taylor expansion at $s= r$
of the function $L_S(s,\check\psi)$. Before stating this conjecture
we must recall a useful auxiliary result. In this result we set $\Z'
:= \Z[\frac{1}{2}]$ and for each odd prime $\ell$ we write
 $\,{\rm ch}^1_{\ell,1-r}: \Z_\ell\otimes
K_{1-2r}(\mathcal{O}_{F,S}) \rightarrow H^1_{{\rm \acute
et}}(\Spec
(\mathcal{O}_{F,S}[\frac{1}{\ell}]),\Z_\ell(1-r))$ for the Chern class homomorphism constructed by
Soul\'e \cite{soule} and Dwyer and Friedlander \cite{df}. We also set ${\rm Tr}_{\C/\R} := 1+\tau \in \Z[G_{\C/\R}]$. 

\begin{lemma}\label{construction}
There exist finitely generated $G$-modules $X_r$ and $Y_r$ which possess all of the following properties.
\begin{itemize}
\item[(i)] $X_{r,{\rm tf}} \subseteq C_{r,{\rm tf}}$ and $Y_{r,{\rm tf}} = {\rm Tr}_{\C/\R}(B_r) \subseteq B_r^{G_{\C/\R}}$ and both $\Z'\otimes X_{r,{\rm tf}} = \Z'\otimes C_{r,{\rm tf}}$ and $\Z'\otimes Y_{r,{\rm tf}} = \Z'\otimes B_r^{G_{\C/\R}}$.

\item[(ii)] For each prime $\ell$ there is an isomorphism of $\Z_\ell[G]$-modules of the form
$\Z_\ell\otimes X_r \cong H^1_{{\rm \acute et}}(\Spec
(\mathcal{O}_{F,S}[\frac{1}{\ell}]),\Z_\ell(1-r)).$ If $\ell$ is odd
 the induced isomorphism $\Z_\ell\otimes X_{r,{\rm tf}} = \Z_\ell\otimes
K_{1-2r}(\mathcal{O}_{F,S})_{\rm tf} \rightarrow H^1_{{\rm \acute
et}}(\Spec (\mathcal{O}_{F,S}[\frac{1}{\ell}]),\Z_\ell(1-r))_{\rm
tf}$ coincides with that induced by ${\rm ch}^1_{\ell,1-r}$.

\item[(iii)] $X_{r,{\rm tor}} = \mu_{1-r}(F)$ and $Y_{r,{\rm tor}} = \bigoplus_{\ell}H^2_{{\rm \acute et}}(\Spec (\mathcal{O}_{F,S}[\frac{1}{\ell}]),\Z_\ell(1-r))$ where $\ell$ runs over all primes.
\item[(iv)] If the Quillen-Lichtenbaum Conjecture is valid for $F$ and $r$, then one has $\Z'\otimes X_{r} = \Z'\otimes
 C_r$ and $\Z'\otimes Y_{r,{\rm tor}} \cong \Z'\otimes K_{-2r}(\mathcal{O}_{F,S})$.
\end{itemize}
\end{lemma}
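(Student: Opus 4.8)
The plan is to construct the modules $X_r$ and $Y_r$ essentially by hand, bootstrapping from the integral \'etale cohomology groups that appear on the right-hand side of (ii) and (iii), and then to verify the listed properties one at a time. First I would recall that for each prime $\ell$ the group $H^1_{\textup{\'et}}(\Spec(\mathcal{O}_{F,S}[\frac{1}{\ell}]),\Z_\ell(1-r))$ is a finitely generated $\Z_\ell[G]$-module whose torsion subgroup is $\mu_{1-r}(F)_{\ell}$ (the $\ell$-part), and whose torsion-free quotient has $\Z_\ell$-rank equal to the rank of $C_r = K_{1-2r}(F)$ predicted by Remark~\ref{useful remark}(iii); for odd $\ell$ the Chern class map ${\rm ch}^1_{\ell,1-r}$ is injective with finite cokernel, while for $\ell = 2$ one simply takes the group as given. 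The standard patching argument (gluing a $\Z[G]$-lattice out of a $\Q[G]$-lattice together with a compatible family of $\Z_\ell[G]$-lattices, one for each $\ell$, which agree for almost all $\ell$) then produces a finitely generated $G$-module $X_r$ with $\Z_\ell\otimes X_r$ isomorphic to the prescribed $\ell$-adic cohomology group for every $\ell$ and with $\Z'\otimes X_{r,{\rm tf}} = \Z'\otimes C_{r,{\rm tf}}$; here one uses that the Chern class maps at odd primes are isomorphisms after $\otimes_{\Z_\ell}\Q_\ell$, so the rational lattices match up. This handles the $X_r$ halves of (i), (ii) and the $X_{r,{\rm tor}} = \mu_{1-r}(F)$ assertion of (iii), since $\mu_{1-r}(F) = \bigoplus_\ell \mu_{1-r}(F)_\ell$.

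Next I would define $Y_r$. The torsion-free part is forced: set $Y_{r,{\rm tf}} := {\rm Tr}_{\C/\R}(B_r)$, which is a $G$-stable full sublattice of $B_r^{G_{\C/\R}}$ of finite index, and the index is a power of $2$ (one checks that ${\rm Tr}_{\C/\R}$ is surjective on the $\sigma$-components with $\sigma(k)\not\subseteq\R$ and has image of index $1$ or $2$ on those with $\sigma(k)\subset\R$), so $\Z'\otimes Y_{r,{\rm tf}} = \Z'\otimes B_r^{G_{\C/\R}}$, giving the $Y_r$ parts of (i). For the torsion one is again told what the answer must be: $Y_{r,{\rm tor}} := \bigoplus_\ell H^2_{\textup{\'et}}(\Spec(\mathcal{O}_{F,S}[\frac{1}{\ell}]),\Z_\ell(1-r))$, a finite $G$-module. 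One then forms $Y_r$ as any extension of $Y_{r,{\rm tf}}$ by $Y_{r,{\rm tor}}$ in the category of $G$-modules — for instance the split extension, since no compatibility is demanded of the extension class — and property (iii) holds by construction.

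For part (iv) I would invoke the Quillen--Lichtenbaum Conjecture in the form recalled in Remark~\ref{rubin remark}: when it holds for $F$ and $r$, the Chern class maps ${\rm ch}^1_{\ell,1-r}$ and the analogous degree-two maps $\Z_\ell\otimes K_{-2r}(\mathcal{O}_{F,S}) \to H^2_{\textup{\'et}}(\Spec(\mathcal{O}_{F,S}[\frac1\ell]),\Z_\ell(1-r))$ are \emph{isomorphisms} for all odd $\ell$, not merely injections with finite cokernel. Feeding this into the patching construction of $X_r$ upgrades $\Z'\otimes X_{r,{\rm tf}} = \Z'\otimes C_{r,{\rm tf}}$ to the full statement $\Z'\otimes X_r = \Z'\otimes C_r$ (localisation away from $2$ kills the discrepancy between $K_{1-2r}(\mathcal{O}_{F,S})$ and $K_{1-2r}(F)$, which is a standard consequence of the localisation sequence in $K$-theory, the relevant residue groups $K_{-2r}$ of finite fields being finite of order prime to the primes in $S$ after inverting $2$), and it identifies $\Z'\otimes Y_{r,{\rm tor}} = \Z'\otimes\bigoplus_{\ell\ \text{odd}}H^2_{\textup{\'et}}(\cdots) \cong \Z'\otimes K_{-2r}(\mathcal{O}_{F,S})$.

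The step I expect to be the main obstacle is \emph{equivariant} patching: constructing $X_r$ as a $\Z[G]$-module, and not merely as an abelian group, whose $\ell$-completions are $G$-equivariantly isomorphic to the given cohomology groups and whose rationalisation is $G$-equivariantly isomorphic to $\Q\otimes C_r$. The naive gluing works over $\Z$ but over $\Z[G]$ one must know that a $\Q[G]$-lattice together with a compatible family of $\Z_\ell[G]$-lattices, agreeing with a fixed reference lattice for almost all $\ell$, glues to a genuine $\Z[G]$-lattice; this is true (it is the usual local-global principle for lattices over orders, as in \cite[\S31A]{curtisr}) but applying it cleanly requires fixing at the outset a single $\Z[G]$-lattice inside $\Q\otimes C_r$ to serve as the common reference away from the finitely many "bad" primes, and then checking that for each such bad prime the prescribed $\Z_\ell[G]$-module really does sit inside the $\ell$-completion of that reference lattice after inverting $\ell$. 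Once the reference lattice and the finite bad set are pinned down, the remaining verifications are the routine rank and torsion computations sketched above.
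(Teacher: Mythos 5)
The approach you take is fundamentally different from the paper's, and it has genuine gaps that stem from misreading what the lemma is really about.

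First, the lemma is not a pure existence statement to be established from scratch. The modules $X_r$ and $Y_r$ that appear here are the specific modules $N'_{r,0}$ and $N'_{r,1}$ constructed in \cite[\S 11.1]{ltav} out of a single complex of \'etale cohomology with compact support, and these are the modules that Conjecture~\ref{ssc} (the modified Lichtenbaum--Gross Conjecture of \cite{ckps}, in the reformulation of \cite{ltav}) literally refers to via the maps $t^j(X_r,\psi)$ and the Fitting-ideal quantity $q(t^j_{\lambda,\chi^\gamma})$. The paper's proof accordingly consists of a single citation to \cite[\S 11.1]{ltav} for everything \emph{except} the identification $Y_{r,{\rm tf}} = {\rm Tr}_{\C/\R}(B_r)$, which is then verified by hand. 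If you instead build some other pair $(X_r, Y_r)$ by patching, the resulting modules will generally differ from the \cite{ltav} ones as $\Z[G]$-modules, and then $q(t^j_{\lambda,\psi})$, hence Conjecture~\ref{ssc} and Theorem~\ref{mainres}, would not be the intended statements. In particular, your remark that ``no compatibility is demanded of the extension class'' is exactly backwards: the extension class of $Y_r$ (and of $X_r$) feeds into the conjecture.

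Second, even as a construction of \emph{some} modules satisfying (i)--(iv), the gluing has a technical gap at the torsion. You glue a torsion-free $\Z[G]$-lattice from $\Q\otimes C_r$ and a family of $\Z_\ell[G]$-lattices, and then attach the finite module $\mu_{1-r}(F)$ separately. But property (ii) requires a $\Z_\ell[G]$-isomorphism $\Z_\ell\otimes X_r \cong H^1_{\textup{\'et}}(\Spec(\mathcal{O}_{F,S}[\tfrac{1}{\ell}]),\Z_\ell(1-r))$, and the right-hand side need not split as an extension of its torsion-free quotient by its torsion submodule over $\Z_\ell[G]$ (it does over $\Z_\ell$, but not $G$-equivariantly in general). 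So if you take $X_r$ to be a split extension, (ii) can fail. Matching the extension class prime by prime is precisely what the construction via a single cohomology complex in \cite{ltav} achieves for free.

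Third, and most tellingly, the one part of the lemma that the paper actually has to prove is the one you dispose of by definition. Setting $Y_{r,{\rm tf}} := {\rm Tr}_{\C/\R}(B_r)$ is not available: $Y_r$ comes from the complex $C_{2,r}^\bullet = R\Hom_{\Z_2}(R\Gamma_{c,\textup{\'et}}(\mathcal{O}_{F,S}[\tfrac12],\Z_2(r)),\Z_2[-2])$ together with its odd-primary analogues, and the content of the paper's argument is to show that the $2$-completion of the torsion-free quotient of $H^1(C_{2,r}^\bullet)$ agrees with $\Z_2\otimes {\rm Tr}_{\C/\R}(B_r)$. This requires the commutative diagram built from \cite[(114)]{bufl96}, the vanishing/non-vanishing of $H^3_{\textup{\'et}}(F_w,\Z_2(1-r))$ according to whether $w$ is real and $r$ is even, and the compatibility of the boundary map $\theta$ with the direct sum decomposition. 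None of this appears in your proposal, and it cannot, because you have chosen $Y_r$ so that the statement becomes a tautology rather than a theorem about the actual \'etale-cohomological module. Your proposal therefore proves something, but not the lemma the paper needs.
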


\begin{proof} All claims in Lemma~\ref{construction} except for the equality $Y_{r,{\rm tf}} = {\rm Tr}_{\C/\R}(B_r)$ follow directly from the
 constructions of \cite[\S11.1]{ltav} (where $X_r$ and $Y_r$ correspond to the modules $N'_{r,0}$ and $N'_{r,1}$ respectively). Since for every
 odd prime $p$ one has $\Z_p\otimes Y_{r,{\rm tf}} = \Z_p\otimes_{\Z'}(\Z' \otimes Y_{r,{\rm tf}}) =
 \Z_p\otimes_{\Z'}(\Z'\otimes B_r^{G_{\C/\R}}) = \Z_p\otimes B_r^{G_{\C/\R}} = \Z_p\otimes {\rm Tr}_{\C/\R}(B_r)$ it therefore suffices to
 prove that $\Z_2\otimes Y_{r,{\rm tf}} = \Z_2 \otimes {\rm Tr}_{\C/\R}(B_r)$. To show  this we recall the construction of $Y_r$
 uses an isomorphism of $\Z_2[G]$-modules $ \Z_2\otimes Y_r \cong H^1(C_{2,r}^\bullet)$ with $C_{2,r}^\bullet :=
 R\Hom_{\Z_2}(R\Gamma_{c,{{\rm \acute et}}}(\mathcal{O}_{F,S}[\frac{1}{2}],\Z_2(r)),\Z_2[-2])$ where the subscript `$c$' denotes cohomology with compact support, whilst from the long exact cohomology
 sequences associated to the commutative diagram of exact triangles given in \cite[(114)]{bufl96} (with $p =2$ and $r$ replaced by $1-r$)
 one obtains an exact commutative diagram of $\Z_2[G]$-modules of
 the form

\[\minCDarrowwidth1em\begin{CD} 0 @> >> H^1(C_{2,r}^\bullet)_{\rm tf} @> >> \bigoplus_{w\mid\infty}H^0_{{\rm \acute et}}(F_w,\Z_2(-r)) @> >>  H^3_{{\rm \acute et}}(\mathcal{O}_{F,S}[\frac{1}{2}],\Z_2(1-r))@> >>0 \\
@.@.  @V \theta VV @V \cong VV\\
@. @. \bigoplus_{w\mid\infty} H^{3}_{{\rm \acute et}}(F_w,\Z_2(1-r)) @= \bigoplus_{w\mid\infty} H^3_{{\rm \acute et}}(F_w,\Z_2(1-r))\\
@. @. @V VV \\
@. @. 0.
\end{CD}\]
Here $w$ runs over all archimedean places of $F$ and each term
$H^0_{{\rm \acute et}}(F_w,\Z_2(-r))$ arises  from the explicit description of the
complex $R\Gamma_\Delta(F_w,\Z_2(r))^*[-3]$ that is given in
\cite[p. 1391]{bufl96}. Now $H_{{\rm \acute et}}^{3}(F_w,\Z_2(1-r))$ is isomorphic to
$\Z_2/2\Z_2$ if $w$ is real and $r$ is even and vanishes in all
other cases. Thus, since the homomorphism $\theta$ in the above
diagram is known to respect the direct sum decompositions of its
source and target (see the proof of \cite[Lem. 18]{bufl96}), the
diagram induces an isomorphism
\[ H^1(C_{2,r}^\bullet)_{\rm tf}\cong \bigoplus_{w'}2H_{{\rm \acute et}}^0(F_{w'},\Z_2(-r))\oplus \bigoplus_wH_{{\rm \acute et}}^0(F_w,\Z_2(-r))\]
where $w'$, resp. $w$, runs over all real, resp. complex, places of $F$. Finally we note that each choice of a topological generator of $\Z_2(-r)$ gives a natural isomorphism of the last displayed direct sum module with $\Z_2\otimes {\rm Tr}_{\C/\R}(B_r)$. \end{proof}

The isomorphism $\Regr$ combines with the two equalities $Y_{r,{\rm tf}} = {\rm Tr}_{\C/\R}(B_r)$ and $\Q\otimes X_r = \Q\otimes C_r$ coming from Lemma~\ref{construction}(i) to imply the existence of homomorphisms of $G$-modules
\[ \lambda: Y_r\rightarrow X_r\]
that have both finite kernel and finite cokernel. In particular, for each such $\lambda$, the induced map $\Q\otimes\lambda: \Q\otimes B_r^{G_{\C/\R}} = \Q\otimes Y_r \to \Q\otimes X_r = \Q\otimes C_r$ is bijective and
so for each character $\psi = \chi^\gamma$, with $\gamma\in \Gamma$, we may set $R_r^\lambda (\psi ) := R_r^{\Q\otimes\lambda} (\psi )$. For each integer $j$ with $1\le j\le \chi(1)$ we then consider the composite
homomorphism of $\mathcal{O}$-modules
\[ t^j_{\lambda,\psi}: Y^j_{r,\psi}\xrightarrow{\lambda^j_\psi}
X^j_{r,\psi} \xrightarrow{t^j(X_r,\psi)} X^{j,\psi}_r.
\]
We write ${\rm Fit}_\mathcal{O}(M)$ for the Fitting ideal of a
finitely generated
 $\mathcal{O}$-module $M$ (so in particular ${\rm Fit}_{\mathcal{O}}(M) \subseteq \mathcal{O}$) and if $f: M \to M'$ is a homomorphism of finitely generated $\mathcal{O}$-modules
 that has both finite kernel and finite cokernel we
 define a fractional $\mathcal{O}$-ideal by setting $q(f) := {\rm
Fit}_\mathcal{O}(\cok(f)){\rm Fit}_\mathcal{O}(\ker(f))^{-1}.$

\begin{conj}\label{ssc}
(The `modified Lichtenbaum-Gross Conjecture')
Let $S$ be any finite set of places of $k$ that contains $S_\infty$ and all places that ramify in $F/k$. Then for every homomorphism $\lambda$ and index $j$ as above one has
\begin{equation}\label{SC}
  \frac{L_S^*(r,\check\chi^\alpha)}{R_r^\lambda (\chi^\alpha )}
=
  \frac{L_S^*( r,\check\chi)^\alpha}{R_r^\lambda(\chi)^\alpha}
\end{equation}
for all $\alpha \in {\rm Aut}(\C)$ and also
\begin{equation}\label{LGC}
 \frac{L_S^*(r,\check\chi^\gamma)}{R^\lambda_r(\chi^\gamma)} \mathcal{O} = q (t^j_{\lambda,\chi^\gamma})^{-1}
\end{equation}
for all $\gamma\in \Gamma$.
\end{conj}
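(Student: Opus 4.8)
The plan is to be upfront: Conjecture~\ref{ssc} is recalled essentially verbatim from \cite[Conj.\ 6.12]{ckps} and is not established here in general. What can be done, and what is what is needed for the applications of this article (see Corollary~\ref{ab case}), is to reduce it to the equivariant Tamagawa number conjecture (ETNC) for the pair $(h^0(\Spec(F))(r),\Z[G])$ in the formulation of \cite[Conj.\ 4(iv)]{bufl00}, which is itself a theorem when $F/\Q$ is Abelian. So the proposal is conditional: assuming the ETNC for this pair, deduce both \eqref{SC} and \eqref{LGC}; this reduction is recorded as Remark~\ref{ssc rem}(ii) below, and I sketch its shape here.

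First I would pass from the ETNC to its reinterpretation in the style of \cite[Prop.\ 4.2.6]{ltav}, using the explicit perfect-complex computations of \cite[\S11.1]{ltav} that already underlie Lemma~\ref{construction} (recall that $X_r$ and $Y_r$ there are the modules $N'_{r,0}$ and $N'_{r,1}$ of loc.\ cit.). This packages the ETNC as a single identity in the relative $K$-group $K_0(\Z[G],\R[G])$ comparing an analytic invariant, built from the leading terms $L_S^*(r,\check\chi^\gamma)$ and the Borel regulator isomorphism $\Regr$, with an algebraic invariant, built from the two-term complex $[\,Y_r\to X_r\,]$ attached to any homomorphism $\lambda$ as above (equivalently, from the compactly-supported complexes $R\Gamma_{c,{\rm \acute et}}(\mathcal{O}_{F,S}[\tfrac{1}{2}],\Z_\ell(r))$ at the various $\ell$). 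The point of this step is to bring the statement into a form from which the isotypic components can be extracted one idempotent at a time.

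Next I would perform the $\chi$-isotypic descent. Applying the functors $M\mapsto M^{j,\psi}$ and $M\mapsto M^j_\psi$ and the comparison maps $t^j(M,\psi)$ of the previous subsection to $X_r$ and $Y_r$, and using the maximal orders $\mathfrak{M}^j$ containing the indecomposable idempotents $f^j_\chi$, the $\Z[G]$-equivariant identity above decomposes into two pieces. The first is the assertion that $L_S^*(r,\check\chi)/R_r^\lambda(\chi)$ lies in $E^\times$ and that applying $\alpha\in{\rm Aut}(\C)$ to it produces the corresponding quotient for $\chi^\alpha$; this is exactly \eqref{SC}, and is the rationality half, obtained from the behaviour of leading terms of Artin $L$-functions and of $\Regr$ under field automorphisms together with Lemma~\ref{ord=mult} (which matches the order of vanishing of $L(s,\check\chi)$ at $s=r$ with the isotypic multiplicity in $E\otimes C_r$, so that both sides are genuinely invertible). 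The second piece is the equality of fractional $\mathcal{O}$-ideals $\bigl(L_S^*(r,\check\chi^\gamma)/R_r^\lambda(\chi^\gamma)\bigr)\mathcal{O}=q(t^j_{\lambda,\chi^\gamma})^{-1}$, which is \eqref{LGC}; here the translation between the Euler-characteristic form of the ETNC and the Fitting-ideal form of \eqref{LGC} is precisely what Lemma~\ref{jacres} — and the Jacobinski conductor bound on which it rests — is designed to carry out, and one checks, using that each $t^j(M,\psi)$ has finite kernel and cokernel and that $R_r^\lambda(\psi)$ is a $\C$-determinant of the same comparison maps, that $q(t^j_{\lambda,\psi})$ is independent of the choices of $\lambda$ and $j$, as the statement requires.

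The main obstacle is that this argument is only a reduction: the ETNC for $(h^0(\Spec(F))(r),\Z[G])$ is a deep open conjecture in general, established unconditionally only for $F/\Q$ Abelian (by \cite[Cor.\ 1.2]{bufl05}, together with the $2$-primary corrections of \cite{flach}), so Conjecture~\ref{ssc} cannot be proved here outside such cases. Even granting the ETNC, the two delicate points are the prime $2$ — which forces the careful $\Z_2$-coefficient analysis already visible in the proof of Lemma~\ref{construction}, via the diagram from \cite[(114)]{bufl96}, and the use of the (now proven) Quillen--Lichtenbaum conjecture to identify higher $K$-theory with \'etale cohomology integrally away from $2$ — and the bookkeeping required to see that the isotypic pieces of the global Euler characteristic really do reassemble into \eqref{LGC} with the correct normalisation by the regulator determinant $R_r^\lambda$.
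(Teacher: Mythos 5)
The statement is a conjecture, and the paper contains no proof of it: it is recalled verbatim from \cite[Conj.\ 6.12]{ckps}, and the only supporting material offered is precisely what you describe, namely Remark~\ref{ssc rem}(ii), which via \cite[\S11.1]{ltav} identifies its validity with a condition on the equivariant Tamagawa number $T\Omega(h^0(\Spec(F))(r),\Z[G])$ (so the ETNC implies it), together with the Abelian case over $\Q$ supplied by \cite[Cor.\ 1.2]{bufl05} and \cite{flach} as invoked in the proof of Corollary~\ref{ab case}. Your proposal correctly recognises this and sketches essentially the same conditional reduction, so it matches the paper's treatment of the statement.
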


\begin{remark}\label{ssc rem}\hfill

(i) After taking account of the isomorphism (\ref{translate}), the equality (\ref{SC}) coincides with the central conjecture of Gross in \cite{gross} (which is often referred to as the `Gross-Stark Conjecture'). The equality (\ref{LGC}) was first explicitly formulated by Chinburg, Kolster, Pappas and Snaith in \cite[Conj. 6.12]{ckps} (where it is referred to as a `modified Lichtenbaum-Gross Conjecture'). The notation of
loc.\ cit.\ is however different from that used here and the necessary translation is described in \cite[\S11.3]{ltav}.

(ii) The `equivariant Tamagawa number' $T\Omega(h^0(\Spec (F))(r),\Z[G])$ that is defined (unconditionally) by Flach and the first author in \cite[Conj. 4(iii)]{bufl00} is an element of the relative algebraic $K$-group $K_0(\Z[G],\R[G])$ and the relevant case of the equivariant Tamagawa number conjecture predicts that $T\Omega(h^0(\Spec (F))(r),\Z[G])$ vanishes. In \cite[\S11.1]{ltav} it is shown that Conjecture~\ref{ssc} is valid if and only if $T\Omega(h^0(\Spec (F))(r),\Z[G])$ belongs to the kernel of a natural homomorphism $\rho^\chi_*: K_0(\Z[G],\R[G]) \to K_0(\mathcal{O},\C)$. When combined with the good functorial properties of $T\Omega(h^0(\Spec (F))(r),\Z[G])$ under change of extension $F/k$ (that are proved in \cite[Prop. 4.1b)]{bufl00}), this observation implies that the validity of Conjecture~\ref{ssc} is unchanged if one replaces $F$ by any subfield $F'$ of $F$ that is Galois over $k$ and such that $\chi$ factors through the projection $G\to G_{F'/k}$.
\end{remark}

\subsection{}
We now reinterpret the conjectural equality (\ref{LGC}). To do this
we set $\psi := \chi^\gamma$ for some fixed $\gamma\in \Gamma$ and $n:= \psi(1) = \chi(1)$. We choose an integer $j$
with $1\le j\le n$ and set
 $\rho_r := \dim_E(V^j_{\psi}\otimes _{\Z [G]}B^{G_{\C/\R}}_r) = \dim_E(V^j_{\psi}\otimes _{\Z [G]}Y_r) =
\dim_E(E\otimes_\mathcal{O} Y^j_{r,\psi}).$ Then $\rho_r$ is independent of both $j$ and $\gamma$ and the argument of Lemma \ref{ord=mult} shows that the function $L_S(s,\check\psi)$ vanishes to order $\rho_r$ at $s=r$. Now (\ref{SC}) implies that the quotient $L_S^{(\rho_r)}(r,\check\psi)/R_r^\lambda (\psi)$ belongs to $E$ and, after unwinding the definition of $R_r^\lambda (\psi)$, this implies that there is an equality of $E$-spaces
\begin{equation*}
  L_S^{(\rho_r)}(r,\check\psi)\cdot{\wedge}^{\rho_r}_E(V^j_\psi\otimes_{\Z [G]}Y_r)
=
  \Regr^{(\psi)}({\wedge}^{\rho_r}_E(V^j_\psi\otimes_{\Z [G]} C_r))
,\end{equation*}
where $\Regr^{(\psi)}: \C\otimes_E{\wedge}^{\rho_r}_E(V^j_\psi\otimes_{\Z [G]} C_r) \xrightarrow{\sim}
\C\otimes_E {\wedge}^{\rho_r}_E(V^j_\psi\otimes_{\Z [G]} Y_r)$ is the isomorphism of $\C$-spaces induced in the obvious way by $\Regr$.
 In the next result we show that (\ref{LGC}) implies a natural integral refinement of this equality.


\begin{prop}\label{firstver}
Assume that (\ref{LGC}) is
valid for $\chi$. Set $K:= F^{\ker(\chi)}$ and $\psi := \chi^\gamma$ with $\gamma \in \Gamma$.
\begin{itemize}
\item[(i)] Then in $\C\otimes_E {\wedge}^{\rho_r}_E(V^j_\psi\otimes_{\Z [G]} Y_r) = \C\otimes_{\mathcal{O}}{\wedge}^{\rho_r}
_{\mathcal{O}}Y^j_{r,\psi}$ one has
\[ |G|^{\rho_r}L_S^{(\rho_r)}(r,\check\psi){\rm
Fit}_{\mathcal{O}}(\mu_{1-r}(K)^{j,\psi}){\wedge}^{\rho_r}
_{\mathcal{O}}(Y^j_{r,\psi})_{\rm tf}\!\! = \!{\rm
Fit}_{\mathcal{O}}((Y^j_{r,\psi})_{\rm tor})\Regr^{(\psi)}(
 {\wedge}^{\rho_r}_{\mathcal{O}}(X_r^{j,\psi})_{\rm tf}).\]
%
\item[(ii)] For each integer $a$ with $1 \le a\le \rho_r$ fix $\sigma_a$ in $\emb{F}$ and define $\tilde\sigma_a := {\rm
Tr}_{\C/\R}((2\pi i)^{-r}\sigma_a) \in
 Y_{r,{\rm tf}}$. Also set $\Sigma':= \{\sigma_a:1\le a\le \rho_r\} \subseteq \emb{F}$. Then for every
 $d'$ in ${\rm Fit}_\mathcal{O}(\mu_{1-r}(K)^{j,\psi})$ there exists a unique element $u^j_{\Sigma',\psi}(d')$ of $\,{\rm
Fit}_{\mathcal{O}}((Y^j_{r,\psi})_{\rm tor})
\wedge_\mathcal{O}^{\rho_r}(n^{-1}(C_r^{j,\psi})_{\rm tf})\,$ for
which one has 
\begin{equation*}
\Regr^{(\psi)}(u^j_{\Sigma',\psi}(d')) =
 d'(n^{-1}|G|)^{\rho_r}L_S^{(\rho_r)}(r,\check\psi){\wedge}_{a
=1}^{a =\rho_r} f^j_{\psi}(\tilde\sigma_a).\end{equation*}
\end{itemize}
\end{prop}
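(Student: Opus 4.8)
The plan is to derive Proposition~\ref{firstver} from the conjectural equality (\ref{LGC}) in two stages: first establish part~(i), a Fitting-ideal identity inside the one-dimensional $\mathcal{O}$-module $\C\otimes_{\mathcal{O}}\wedge^{\rho_r}_{\mathcal{O}}Y^j_{r,\psi}$, and then extract from it the existence of the element $u^j_{\Sigma',\psi}(d')$ in part~(ii). For part~(i), I would start from the chain of maps $t^j_{\lambda,\psi}\colon Y^j_{r,\psi}\xrightarrow{\lambda^j_\psi}X^j_{r,\psi}\xrightarrow{t^j(X_r,\psi)}X^{j,\psi}_r$ and compute $q(t^j_{\lambda,\psi})={\rm Fit}_\mathcal{O}(\cok)\,{\rm Fit}_\mathcal{O}(\ker)^{-1}$ multiplicatively over the composite. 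Using Lemma~\ref{construction}(iii) one has $X_{r,{\rm tor}}=\mu_{1-r}(F)$, whose ``$(j,\psi)$-part'' is $\mu_{1-r}(K)^{j,\psi}$ (here $K=F^{\ker(\chi)}$, and one checks $T^j_\psi$ kills the part of $\mu_{1-r}$ not fixed by $\ker(\chi)$); similarly $Y_{r,{\rm tor}}$ contributes ${\rm Fit}_{\mathcal{O}}((Y^j_{r,\psi})_{\rm tor})$. The map $t^j(X_r,\psi)\colon X^j_{r,\psi}\to X^{j,\psi}_r$ has kernel and cokernel controlled by $\mu_{1-r}(K)^{j,\psi}$ up to the factor $|G|^{\rho_r}$ (this is the standard ``norm map'' discrepancy: the composite $X^j_{r,\psi}\to X^{j,\psi}_r\hookrightarrow \mathcal{O}\otimes X_r$ and its divisibility properties from Lemma~\ref{jacres}(ii) produce the $|G|^{\rho_r}$), while $\lambda^j_\psi$ on torsion-free parts realizes $\Regr^{(\psi)}$ up to the scalar $L_S^{(\rho_r)}(r,\check\psi)/R_r^\lambda(\psi)$, which by (\ref{SC}) lies in $E$. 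Substituting $q(t^j_{\lambda,\chi^\gamma})^{-1}=\frac{L_S^*(r,\check\chi^\gamma)}{R^\lambda_r(\chi^\gamma)}\mathcal{O}$ from (\ref{LGC}) and rearranging gives exactly the displayed identity of part~(i), with the Fitting ideals of the torsion parts appearing on opposite sides and the regulator $\Regr^{(\psi)}$ transporting $\wedge^{\rho_r}_{\mathcal{O}}(X_r^{j,\psi})_{\rm tf}$ over.

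For part~(ii), I would observe that the chosen elements $\tilde\sigma_a={\rm Tr}_{\C/\R}((2\pi i)^{-r}\sigma_a)$ generate (over $\Q$) the module $Y_{r,{\rm tf}}={\rm Tr}_{\C/\R}(B_r)$, so $\wedge_{a=1}^{\rho_r}f^j_\psi(\tilde\sigma_a)$ is a nonzero element of $\wedge^{\rho_r}_E(E\otimes_{\mathcal{O}}Y^j_{r,\psi})$ and in fact, up to bounded index, an $\mathcal{O}$-basis (or generator after tensoring appropriately) of $\wedge^{\rho_r}_{\mathcal{O}}(Y^j_{r,\psi})_{\rm tf}$. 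Multiplying the identity in (i) by $d'\in{\rm Fit}_\mathcal{O}(\mu_{1-r}(K)^{j,\psi})$ and evaluating at this basis element produces on the right-hand side an element of ${\rm Fit}_{\mathcal{O}}((Y^j_{r,\psi})_{\rm tor})\cdot\Regr^{(\psi)}(\wedge^{\rho_r}_{\mathcal{O}}(X_r^{j,\psi})_{\rm tf})$, and the left-hand side becomes $d'(n^{-1}|G|)^{\rho_r}L_S^{(\rho_r)}(r,\check\psi)\wedge_{a=1}^{\rho_r}f^j_\psi(\tilde\sigma_a)$ once one absorbs the factor $|G|^{\rho_r}$ from (i) and matches $n=\psi(1)=\chi(1)$; the conversion of $(X_r^{j,\psi})_{\rm tf}$ into $n^{-1}(C_r^{j,\psi})_{\rm tf}$ uses Lemma~\ref{jacres}(ii) (the inclusion ${\rm pr}_\psi(\mathcal{O}\otimes C_r)\subseteq \psi(1)^{-1}\sum_j\iota(C_r^{j,\psi})$) together with $\Z'\otimes X_{r,{\rm tf}}=\Z'\otimes C_{r,{\rm tf}}$ from Lemma~\ref{construction}(i), so that $X_r^{j,\psi}$ sits inside $n^{-1}(C_r^{j,\psi})_{\rm tf}$ after inverting $2$ — and one handles the $2$-primary part separately using the explicit description in Lemma~\ref{construction}. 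Uniqueness of $u^j_{\Sigma',\psi}(d')$ is immediate since $\Regr^{(\psi)}$ is an isomorphism of $\C$-spaces and the target is a torsion-free $\mathcal{O}$-module.

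The main obstacle I anticipate is the careful bookkeeping of the factor $|G|^{\rho_r}$ (equivalently $n^{-1}|G|$ per exterior-power slot) and the passage between the ``$X_r$-side'' and the ``$C_r$-side''. The Fitting-ideal identity in (i) lives naturally on modules built from $X_r$ and $Y_r$, which agree with $C_r$ and $B_r^{G_{\C/\R}}$ only after inverting $2$ (Lemma~\ref{construction}(i),(iv)), so to get a clean integral statement at $2$ as well one must invoke the explicit $2$-adic computation carried out in the proof of Lemma~\ref{construction} — in particular the identification $H^1(C_{2,r}^\bullet)_{\rm tf}\cong\bigoplus_{w'}2H^0_{{\rm \acute et}}(F_{w'},\Z_2(-r))\oplus\bigoplus_w H^0_{{\rm \acute et}}(F_w,\Z_2(-r))$ — to control the discrepancy and confirm it is already accounted for by the $(n^{-1}|G|)^{\rho_r}$ factor and the Fitting ideal ${\rm Fit}_\mathcal{O}((Y^j_{r,\psi})_{\rm tor})$ appearing on the right. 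A secondary technical point is verifying that $\mu_{1-r}(F)^{j,\psi}=\mu_{1-r}(K)^{j,\psi}$, which follows because $T^j_\psi$ factors through $\Z[G_{K/k}]$ (as $f^j_\psi$ does, $\psi$ being inflated from $G_{K/k}$) and $\mu_{1-r}(F)=\mu_{1-r}(F\cap\Q^{\rm ab})$ by Remark~\ref{QabFremark}, so only the $\ker(\chi)$-fixed part of $\mu_{1-r}(F)$ survives; this lets us write everything over $K$ as in the statement.
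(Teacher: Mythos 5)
Your argument for part (i) follows essentially the same route as the paper: start from the displayed Fitting-ideal equality (\ref{LGC}), factor $q(t^j_{\lambda,\psi})$ through the composite $t^j(X_r,\psi)\circ\lambda^j_\psi$, apply the Fitting-ideal transport lemma (the paper isolates this as Lemma~\ref{stolen}) twice, recognise $(X_r^{j,\psi})_{\rm tor}$ as $\mu_{1-r}(K)^{j,\psi}$, and pick up the factor $|G|^{\rho_r}$ because $E\otimes t^j(X_r,\psi)$ is multiplication by $|G|$. The details are loosely stated — e.g. $\mu_{1-r}(K)^{j,\psi}$ is the torsion of the \emph{target} $X_r^{j,\psi}$ entering the Fitting-ideal identity, not the kernel or cokernel of $t^j(X_r,\psi)$ itself — but the skeleton is right.

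Your part (ii), however, introduces two detours that do not belong here and reveal a misreading of the supporting lemmas. First, you invoke Lemma~\ref{jacres}(ii) (Jacobinski) and an ad hoc $2$-adic analysis to account for the $n^{-1}$ and the passage from $X_r^{j,\psi}$ to $C_r^{j,\psi}$. Neither is needed: the paper's proof of (ii) simply divides the equality of (i) by $n^{\rho_r}$, which converts $|G|^{\rho_r}$ to $(n^{-1}|G|)^{\rho_r}$ and $\wedge^{\rho_r}(X_r^{j,\psi})_{\rm tf}$ to $\wedge^{\rho_r}(n^{-1}(X_r^{j,\psi})_{\rm tf})$, and then cites the \emph{integral} inclusion $X_{r,{\rm tf}}\subseteq C_{r,{\rm tf}}$ from Lemma~\ref{construction}(i). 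You appear to have read Lemma~\ref{construction}(i) as giving only $\Z[\tfrac12]$-statements, but it also gives the one-sided inclusion $X_{r,{\rm tf}}\subseteq C_{r,{\rm tf}}$ without inverting $2$, so no separate $2$-primary argument is required. (Lemma~\ref{jacres}(ii) is used only later, in the proof of Theorem~\ref{mainres}, to land in $\mathcal{O}\otimes C_{r,{\rm tf}}$ rather than $n^{-1}(\cdots)$.) Second, you assert that the $\tilde\sigma_a$ generate $Y_{r,{\rm tf}}$ over $\Q$; that is not assumed in the statement and is not needed — if the wedge $\wedge_a f^j_\psi(\tilde\sigma_a)$ happens to vanish one just takes $u^j_{\Sigma',\psi}(d')=0$. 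Existence in general is a direct consequence of (i), and uniqueness comes from the injectivity of $\Regr^{(\psi)}$, which you do have.
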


\begin{proof}
We set $\rho := \rho_r$. To derive the equality of claim (i) from the equality (\ref{LGC}) in Conjecture~\ref{ssc} we note that
\begin{align*}
   L_S^{(\rho_r)}(r,\check\psi)\cdot {\wedge}^{\rho}_{\mathcal{O}}(Y^j_{r,\psi})_{\rm tf}
&= R^\lambda_r(\psi)q (t^j_{\lambda,\psi})^{-1}{\wedge}^{\rho}_{\mathcal{O}}(Y^j_{r,\psi})_{\rm tf}
\\
&= q(t^j_{\lambda,\psi})^{-1}(\Regr^{(\psi)}\circ {\wedge}^\rho_\C (\C\otimes_\mathcal{O}\lambda^j_\psi))
   ({\wedge}^\rho_{\mathcal{O}}(Y^j_{r,\psi})_{\rm tf})
\\
&= \frac{q (\lambda^j_\psi)}{q(t^j_{\lambda,\psi})}
   \frac{{\rm Fit}_\mathcal{O}((Y^j_{r,\psi})_{\rm tor})}{{\rm Fit}_\mathcal{O}((X^j_{r,\psi})_{\rm tor})}
   \Regr^{(\psi)}({\wedge}^\rho_{\mathcal{O}}(X^j_{r,\psi})_{\rm tf})
\\
&= q (t^j(X_{r},\psi))^{-1}\frac{{\rm Fit}_\mathcal{O}((Y^j_{r,\psi})_{\rm tor})}{{\rm Fit}_\mathcal{O}((X^j_{r,\psi})_{\rm tor})}
   \Regr^{(\psi)}({\wedge}^\rho_{\mathcal{O}}(X^j_{r,\psi})_{\rm tf})
\\
&= |G|^{-\rho}\frac{{\rm Fit}_\mathcal{O}((Y^j_{r,\psi})_{\rm tor})}{{\rm Fit}_\mathcal{O}((X^{j,\psi}_r)_{\rm tor})}
    \Regr^{(\psi)}({\wedge}^\rho_{\mathcal{O}}(X_r^{j,\psi})_{\rm tf})
\,.
\end{align*}
The first equality here follows immediately from (\ref{LGC}), the second from the definition of $R^\lambda_r(\psi)$
and the fourth from the equality $q(t^j_{\lambda,\psi}) =
q(t^j(X_r,\psi))q(\lambda^j_\psi)$ which is a consequence of the
kernel-cokernel sequence of the composite $t^j_{\lambda,\psi} =
t^j(X_r,\psi)\circ \lambda^j_\psi$. In addition, the third and fifth
displayed equalities follow by applying Lemma~\ref{stolen} below with $f$ equal to $\lambda^j_\psi : Y^j_{r,\psi} \to
 X^j_{r,\psi}$ and $t^j(X_r,\psi): X^j_{r,\psi} \to X_r^{j,\psi}$
respectively.

To deduce the equality of claim (i) from the above displayed
formula it only remains to show that $(X^{j,\psi}_r)_{\rm
tor} = \mu_{1-r}(K)^{j,\psi}$. But $(X^{j,\psi}_r)_{\rm tor} := (T_\psi^j\otimes X_r)^G_{\rm tor} =
 (T_\psi^j\otimes X_{r,{\rm tor}})^G = (T_\psi^j\otimes X^{G_{F/K}}_{r,{\rm tor}})^G =: (X^{G_{F/K}}_{r,{\rm tor}})^{j,\psi}$ where the second  and third equalities follow from the fact that $T_\psi^j$ is torsion-free and, in the latter case, that $G_{F/K}=\ker(\psi)$ acts trivially on $T_\psi^j$. The required equality is therefore true because Lemma~\ref{construction}(iii) implies $X^{G_{F/K}}_{r,{\rm tor}} = \mu_{1-r}(F)^{G_{F/K}} := H^0(G_{\Q^c/F},\Q/\Z(1-r))^{G_{F/K}} = H^0(G_{\Q^c/K},\Q/\Z(1-r)) =:\mu_{1-r}(K)$. 

To prove claim (ii) we note that the element ${\wedge}_{a =1}^{a
=\rho} f^j_{\psi}(\tilde\sigma_a)$ belongs to
$\wedge_\mathcal{O}^{\rho}(Y^j_{r,\psi})_{\rm tf}$. The existence of
an element $u^j_{\Sigma',\psi}(d')$ of ${\rm
Fit}_{\mathcal{O}}((Y^j_{r,\psi})_{\rm
tor})\wedge_\mathcal{O}^{\rho}(n^{-1}(X_r^{j,\psi})_{\rm tf})$ which
satisfies the displayed equality in claim (ii) therefore follows
directly from claim (i). It thus suffices to note that Lemma
\ref{construction}(i) implies $(X_r^{j,\psi})_{\rm tf}\subseteq
(C_r^{j,\psi})_{\rm tf}$ and that the uniqueness of
$u^j_{\Sigma',\psi}(d')$ follows from the injectivity of ${\rm
Reg}_{r}^{(\psi)}$.\end{proof}

\begin{lemma}\label{stolen}
If $f: M \to N$ is any
homomorphism of finitely generated $\mathcal{O}$-modules that has
both finite kernel and finite cokernel, then there is an equality of $\mathcal{O}$-lattices
\[
  {\wedge}^d_E(E\otimes_{\mathcal{O}}f)({\wedge}^d_\mathcal{O}M_{\rm tf})
=
  q(f)\frac{{\rm Fit}_\mathcal{O}(M_{\rm tor})}{{\rm Fit}_\mathcal{O}(N_{\rm tor})}{\wedge}^d_\mathcal{O}N_{\rm tf}
\]
with $d := {\rm dim}_E(E\otimes_{\mathcal{O}} M)$.
\end{lemma}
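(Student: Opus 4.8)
The plan is to reduce Lemma~\ref{stolen} to the two extreme cases where $f$ is either injective or surjective, and in each of those cases to invoke the elementary divisor theorem for modules over the Dedekind domain $\mathcal{O}$. First I would factor $f$ through its image: writing $M' := M_{\rm tf} = M/M_{\rm tor}$ and $N' := N_{\rm tf}$, the map $f$ induces a homomorphism $\bar f\colon M' \to N'$ with the \emph{same} cokernel as $f$ (since $N'$ is a quotient of $N$ by its torsion and $\cok(f)_{\rm tf} = \cok(\bar f)$, one checks $\cok(\bar f) \cong \cok(f)/(\text{image of }N_{\rm tor})$; more carefully, $q(\bar f) = q(f)\cdot {\rm Fit}_\mathcal{O}(N_{\rm tor})/{\rm Fit}_\mathcal{O}(M_{\rm tor})$ by a diagram chase through the snake lemma applied to $0 \to M_{\rm tor} \to M \to M' \to 0$ and $0 \to N_{\rm tor} \to N \to N' \to 0$). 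Since ${\wedge}^d_E(E\otimes_\mathcal{O} f)$ depends only on $\bar f$, this reduces the claim to the torsion-free case: for an injection $\bar f\colon M' \hookrightarrow N'$ of $\mathcal{O}$-lattices of the same rank $d$ with finite cokernel, one has ${\wedge}^d_\mathcal{O}\bar f({\wedge}^d_\mathcal{O}M') = {\rm Fit}_\mathcal{O}(\cok(\bar f))\cdot {\wedge}^d_\mathcal{O}N' = q(\bar f)\,{\wedge}^d_\mathcal{O}N'$.

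For this core statement I would argue locally. Both sides are $\mathcal{O}$-submodules of the rank-one $E$-space ${\wedge}^d_E(E\otimes_\mathcal{O}N')$, so it suffices to check equality after localising and completing at each nonzero prime $\mathfrak{p}$ of $\mathcal{O}$. Over the discrete valuation ring $\mathcal{O}_\mathfrak{p}$ the elementary divisor theorem gives bases $(x_1,\dots,x_d)$ of $M'_\mathfrak{p}$ and $(y_1,\dots,y_d)$ of $N'_\mathfrak{p}$ with $\bar f(x_i) = \pi^{a_i} y_i$ for the uniformiser $\pi$ and integers $a_i \ge 0$. Then ${\wedge}^d\bar f$ sends $x_1\wedge\cdots\wedge x_d$ to $\pi^{\sum a_i}\,y_1\wedge\cdots\wedge y_d$, while $\cok(\bar f)_\mathfrak{p} \cong \bigoplus_i \mathcal{O}_\mathfrak{p}/\pi^{a_i}$ has Fitting ideal $\pi^{\sum a_i}\mathcal{O}_\mathfrak{p}$; since ${\rm Fit}$ commutes with localisation, the two $\mathcal{O}_\mathfrak{p}$-lattices agree, and the ${\rm Fit}_\mathcal{O}(\ker) = \mathcal{O}$ factor in $q(\bar f)$ is trivial because $\bar f$ is injective. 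Reassembling over all $\mathfrak{p}$ gives the equality globally.

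The only genuine subtlety — the step I expect to be the main obstacle — is the bookkeeping that relates $q(f)$ for the original (non-torsion-free) $f$ to $q(\bar f)$ and the ratio of torsion Fitting ideals. Here one must be careful that $\ker(f)$ maps onto $\ker(\bar f) = 0$ only modulo $M_{\rm tor}$: in fact $\ker(\bar f) = 0$ forces $\ker(f) \subseteq M_{\rm tor}$, so $\ker(f)$ is finite (as assumed) and one has an exact sequence $0 \to \ker(f) \to M_{\rm tor} \to N_{\rm tor} \to \cok(f)_{\rm tor} \to \cok(\bar f) \to 0$ obtained by applying the snake lemma to the map of short exact sequences $0\to (M_{\rm tor}, M, M') \to (N_{\rm tor}, N, N')$ induced by $f$. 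Using the multiplicativity of Fitting ideals in short exact sequences of finite $\mathcal{O}$-modules (or, equivalently, that ${\rm Fit}_\mathcal{O}$ of a finite module equals the product of its elementary divisor ideals, which is additive in exact sequences for modules over a Dedekind domain), this yields precisely ${\rm Fit}_\mathcal{O}(\cok(\bar f)) = {\rm Fit}_\mathcal{O}(\cok(f))\,{\rm Fit}_\mathcal{O}(\ker(f))^{-1}\,{\rm Fit}_\mathcal{O}(N_{\rm tor})\,{\rm Fit}_\mathcal{O}(M_{\rm tor})^{-1} = q(f)\,{\rm Fit}_\mathcal{O}(N_{\rm tor})/{\rm Fit}_\mathcal{O}(M_{\rm tor})$. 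Substituting into the torsion-free case and noting ${\wedge}^d_E(E\otimes_\mathcal{O}f) = {\wedge}^d_E(E\otimes_\mathcal{O}\bar f)$ completes the proof.
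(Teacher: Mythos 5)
Your overall strategy is exactly the paper's: apply the snake lemma to the map of short exact sequences $0 \to M_{\rm tor} \to M \to M_{\rm tf} \to 0$ and $0 \to N_{\rm tor} \to N \to N_{\rm tf} \to 0$, then combine multiplicativity of Fitting ideals on the resulting exact sequences of finite modules with the fact that, for an injection of rank-$d$ projective lattices with finite cokernel, the top exterior power transforms by ${\rm Fit}_\mathcal{O}$ of the cokernel. Your elementary-divisor argument for the last ingredient simply proves in detail what the paper dismisses as ``the definition of ${\rm Fit}_\mathcal{O}(\cok(f_{\rm tf}))$''; it is correct and buys the same thing.

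The substantive problem is a consistent sign error in the Fitting-ideal bookkeeping. From your five-term exact sequence
$0 \to \ker(f) \to M_{\rm tor} \to N_{\rm tor} \to \cok(f) \to \cok(\bar f) \to 0$,
multiplicativity gives
\[
{\rm Fit}_\mathcal{O}(\cok(\bar f)) = \frac{{\rm Fit}_\mathcal{O}(M_{\rm tor})\,{\rm Fit}_\mathcal{O}(\cok(f))}{{\rm Fit}_\mathcal{O}(\ker(f))\,{\rm Fit}_\mathcal{O}(N_{\rm tor})} = q(f)\,\frac{{\rm Fit}_\mathcal{O}(M_{\rm tor})}{{\rm Fit}_\mathcal{O}(N_{\rm tor})},
\]
which is the ratio that actually appears in the statement of the lemma; you instead arrive at $q(f)\,{\rm Fit}_\mathcal{O}(N_{\rm tor})/{\rm Fit}_\mathcal{O}(M_{\rm tor})$, both in the parenthetical in your first paragraph and in the final displayed computation. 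A quick check with $\mathcal{O}=\Z$, $M=\Z\oplus \Z/2\Z$, $N=\Z$ and $f$ the projection (so that $f_{\rm tf}={\rm id}$, $q(f)=(1/2)$, ${\rm Fit}(M_{\rm tor})=(2)$, ${\rm Fit}(N_{\rm tor})=(1)$) confirms the formula must produce the unit ideal and hence requires ${\rm Fit}(M_{\rm tor})$ in the numerator. Once you repair the alternating product (the Fitting ideals of $A_1,\dots,A_5$ in an exact complex satisfy ${\rm Fit}(A_1){\rm Fit}(A_3){\rm Fit}(A_5) = {\rm Fit}(A_2){\rm Fit}(A_4)$, not the other grouping), your argument is correct and coincides with the paper's.
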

\begin{proof} We consider the following exact commutative diagram

\begin{equation*}
\begin{CD}
@. \ker(f_{\rm tor}) @= \ker(f) @. 0\\
@. @V VV @V VV @V VV \\
0@ > >> M_{\rm tor} @> >> M @> >> M_{\rm tf} @> >> 0\\
@. @V{f_{\rm tor}}VV @V f VV @V f_{\rm tf}VV \\
0@ > >> N_{\rm tor} @> >> N @> >> N_{\rm tf} @> >> 0\\
@. @V VV @V VV @V VV \\
0@ > >> \cok(f_{\rm tor}) @> >> \cok(f) @> >> \cok(f_{\rm tf}) @> >> 0.\\
\end{CD}\end{equation*}
Here $f_{\rm tor}$ and $f_{\rm tf}$ denote the homomorphisms that
are induced by the given map $f$; the equality $\ker( f_{\rm tor}) =
\ker(f)$ and the injectivity of $f_{\rm tf}$ both follow from the
assumption that $\ker(f)$ is finite and the exactness of the bottom
row then follows from the Snake lemma. Now $M_{\rm tf}$, and hence also
$N_{\rm tf}$ since $\cok(f_{\rm tf})$ is finite, is a projective
$\mathcal{O}$-module of rank $d$ and so the definition of ${\rm
Fit}_\mathcal{O}(\cok(f_{\rm tf}))$ implies that
${\wedge}^d_E(E\otimes_{\mathcal{O}} f)({\wedge}^d_\mathcal{O}M_{\rm
tf}) = {\rm Fit}_\mathcal{O}(\cok(f_{\rm tf}))\cdot
{\wedge}^d_\mathcal{O}N_{\rm tf}.$ On the other hand, the exactness
of the bottom row and left hand column of the above diagram combines
with the multiplicativity of Fitting ideals on exact sequences of
finite $\mathcal{O}$-modules and the definition of $q(f)$ to imply
that ${\rm Fit}_\mathcal{O}(\cok(f_{\rm tf})) = {\rm
Fit}_\mathcal{O}(\cok(f)){\rm Fit}_\mathcal{O}(\cok(f_{\rm
tor}))^{-1} = q(f){\rm Fit}_\mathcal{O}(\ker(f)){\rm Fit}_\mathcal{O}(\cok(f_{\rm
tor}))^{-1} = q(f){\rm Fit}_\mathcal{O}(M_{\rm tor}){\rm
Fit}_\mathcal{O}(N_{\rm tor})^{-1}$. The claimed equality is thus
clear.
\end{proof}

\subsection{}\label{proof main}
We now prove Theorem~\ref{mainres}. To do this we set $n := \chi(1)$, $\psi := \chi^\gamma$ with $\gamma \in \Gamma$ and ${\rm pr}_\psi :=
n^{-1}|G|e_{\psi} = \sum_{g \in G}\check\psi(g)g \in \mathcal
{O}[G]$ and note that $\sum_{j = 1}^{j =
n}f^j_{\psi}$ is a decomposition of
$e_{\psi}$ as a sum of indecomposable idempotents in
$E[G]$. We set $K:= F^{\ker(\chi)} = F^{\ker(\psi)}$ and $d' := w_{1-r}(K)^n$. For the given embedding $\sigma$ we also set $\tilde\sigma :=
{\rm Tr}_{\C/\R}((2\pi i)^{-r}\sigma)$ (so $\tilde\sigma \in Y_{r,{\rm tf}}$ by Lemma~\ref{construction}(i)) and then for each index $j$ as above we define
\begin{equation}\label{epsilon-def}
\epsilon_\sigma(\psi,j) := \frac{d' |G|}{n}
\frac{L_S'(r,\check\psi)}{R^\lambda_r(\psi)}
 \lambda^j_\psi (f^j_{\psi}(\tilde\sigma)) \in \C\otimes_\mathcal{O}C_r^{j,\psi}
\,.
\end{equation}
%

We may assume that $\rho_r = 1$ since if $\rho_r > 1$, then
$L_S'(r,\check\chi^\delta) =0$ for all $\delta\in \Gamma$ and so
 Conjecture~\ref{chinlike} is obviously valid. Then, since $\rho_r = 1$ one has $L_S^*(r,\check\psi)
= L_S'(r,\check\psi)$ and so, as $f^j_{\psi}(\tilde\sigma)$ is a
non-zero element of the dimension one $E$-space
$E\otimes_\mathcal{O}Y^j_{r,\psi}$, the definition of
$R^\lambda_r(\psi)$ implies that
\[ \Regr^{(\psi)}(\epsilon_\sigma(\psi,j)) = d'n^{-1}|G|L_S'(r,\check\psi)\cdot
f^j_{\psi}(\tilde\sigma).\]
Now the finite group $\mu_{1-r}(K)$ is cyclic and so the (finite)
$\mathcal{O}$-module $\mu_{1-r}(K)^{j,\psi} \subseteq T^j_\psi\otimes
\mu_{1-r}(K)$ is both annihilated by $w_{1-r}(K)$ and generated by
(at most) ${\rm rk}_{\mathcal{O}}(T_\psi^j)=n$ elements. It follows in particular that $d'$
belongs to ${\rm Fit}_{\mathcal{O}}(\mu_{1-r}(K)^{j,\psi})$ and so the last
displayed formula implies that $\epsilon_\sigma(\psi,j)$ is equal to
 the element $u^j_{{\lbrace \sigma\rbrace},\psi}(d')$ of $n^{-1}{\rm Fit}_{\mathcal{O}}((Y^j_{r,\psi})_{\rm
tor}) (C^{j,\psi}_{r})_{\rm tf}$ that occurs in Proposition~\ref{firstver}(ii). Lemma~\ref{jacres}(ii) thus implies that 

\begin{equation}\label{last one added} \epsilon_\sigma(\psi,j)\in {\rm Fit}_{\mathcal{O}}((Y^j_{r,\psi})_{\rm tor})\otimes C_{r,{\rm tf}} \subseteq \mathcal{O}\otimes C_{r,{\rm tf}}.\end{equation}
The element $\overline{\epsilon}_\sigma(\psi,S) := \sum_{j = 1}^{j =
n}\epsilon_\sigma(\psi,j)$ therefore belongs to $\mathcal{O}\otimes C_{r,{\rm tf}}$ and satisfies the equality
\begin{equation}\label{barepsilon}
\begin{aligned}
& \mathrel{\phantom{=}} (2\pi i)^{r}\Regr(\overline{\epsilon}_\sigma(\psi,S))\\
& = (2\pi i)^{r}\sum_{j =1}^{j = n}d' n^{-1}|G|L_S'(r,\check\psi)\cdot f^j_{\psi}(\tilde\sigma)\\
& = (2\pi i)^{r}d' L_S'(r,\check\psi)\cdot{\rm pr}_\psi (\tilde\sigma)\\
& = d'\sum_{g \in G}L_S'(r,\check\psi)\check\psi(g)g(\sigma)+
    d'\sum_{g \in G}L_S'(r,\check\psi)(-1)^r\check\psi(g)g(\tau\circ\sigma)
\,.
\end{aligned}
\end{equation}

Now the assumed validity of (\ref{SC}) combines with (\ref{epsilon-def}) to imply that for all $\alpha\in {\rm Aut}(\C)$ and all $j$ one has
\begin{multline}\label{very last added} \epsilon_\sigma(\chi^\alpha,j)\! :=\! \frac{d' |G|}{n}
\frac{L_S'(r,\check\chi^\alpha)}{R^\lambda_r(\chi^\alpha)} \lambda^j_{\chi^\alpha} (f^j_{\chi^\alpha}(\tilde\sigma))\\
 = \left(\frac{d' |G|}{n}
 \frac{L_S'(r,\check\chi)}{R^\lambda_r(\chi)}
\lambda^j_\chi (f^j_{\chi}(\tilde\sigma))\right)^\alpha\!\! =:\!  \epsilon_\sigma(\chi,j)^\alpha \end{multline}
(where in the last two terms we use the natural semi-linear action of ${\rm Aut}(\C)$ on the space $\C\otimes_\mathcal{O}C_r^{j,\psi} \subset f_\psi^j\C[G]\otimes C_r$)  and hence also $\overline{\epsilon}_\sigma(\psi,S) =\overline{\epsilon}_\sigma(\chi,S)^\gamma$. Given this equality, it is now straightforward to check, by unwinding the definition of $\Regr$, that (\ref{barepsilon}) implies that for all $\sigma'$ in $\emb{F}$ the image of $\overline{\epsilon}_\sigma(\chi,S)^\gamma$ under $(2\pi i)^r{\rm reg}_{1-r,\sigma'}$ is equal to the right hand side of (\ref{conj eq}). To verify Conjecture~\ref{chinlike}, and hence complete the proof of Theorem~\ref{mainres}(i), it thus suffices to define $\epsilon_\sigma(\chi,S)$ to be any pre-image of $\overline{\epsilon}_\sigma(\chi,S)$ under the natural map $\mathcal{O}\otimes C_r \to \mathcal{O}\otimes C_{r,{\rm tf}}$.

The containment ${\rm Tr}_1(\epsilon_\sigma(\chi,S)) \in K_{1-2r}(F)$ in Theorem~\ref{mainres}(ii) follows
directly from claim~(i) and the result of Proposition~\ref{extraprop}(i)
with $d =1$. To prove the rest of claim (ii) we set $\overline{\epsilon}(\chi) :=
\overline{\epsilon}_\sigma(\chi,S)$. We also note that for each prime ideal $\mathfrak{p}$ of $\mathcal{O}$ the isomorphism class of the $\mathcal{O}_\mathfrak{p}$-module
$\mathcal{O}_\mathfrak{p}\otimes_{\mathcal{O}}Y_{r,\chi^\gamma}^j$ is independent of the choice of index $j$ and hence that ${\rm Fit}_{\mathcal{O}}((Y_{r,\chi^\gamma}^j)_{\rm tor}) = {\rm Fit}_{\mathcal{O}}((Y_{r,\chi^\gamma}^1)_{\rm tor})$ for all $j$. From the containment (\ref{last one added}) and equalities (\ref{very last added}) we therefore know that both 
$\overline{\epsilon}(\chi)^\gamma$ belongs to ${\rm Fit}_{\mathcal{O}}((Y_{r,\chi^\gamma}^1)_{\rm tor})\otimes C_{r,{\rm tf}}$
and that $\overline{\epsilon}(\chi^\gamma)= \overline{\epsilon}(\chi)^\gamma e_{\chi^\gamma}$
and so for every $\phi$ in $\Hom_G(C_r,\Z [G])\subseteq
\Hom_{E[G]}(E\otimes C_{r,{\rm
tf}},E[G])$ the element $\phi(\overline{\epsilon}(\chi)^\gamma)$ belongs to ${\rm Fit}_{\mathcal{O}}((Y_{r,\chi^\gamma}^1)_{\rm tor})\cdot\mathcal{O}[G]e_{\chi^\gamma}$. Now Lemma~\ref{construction}(i) implies that $\Z'\otimes
Y_{r,{\rm tf}}$ $= \Z'\otimes B_r^{G_{\C/\R}}$ is a projective $\Z'[G]$-module so that $\Z'\otimes
(Y^1_{r,\chi^\gamma})_{\rm tor}$ identifies with $\Z'\otimes
(Y_{r,{\rm tor}})^1_{\chi^\gamma}$ and hence that $\phi(\overline{\epsilon}(\chi)^\gamma)\in \Z'\otimes{\rm Fit}_{\mathcal{O}}((Y_{r,{\rm tor}})^1_{\chi^\gamma})\cdot\mathcal{O}[G]e_{\chi^\gamma}$.
 By applying \cite[Lem. 11.1.2(i)]{dals} we can therefore deduce that the element $\chi(1)^{-1}|G|^2\phi(\overline{\epsilon}(\chi)^\gamma)$ belongs to $\Z'\otimes {\rm Ann}_{\Z[G]}(Y_{r,{\rm tor}})$. The displayed containment in Theorem~\ref{mainres}(ii) thus follows directly from the equality
$\phi({\rm Tr}_1(\epsilon_\sigma(\chi,S))) = \sum_{\gamma \in \Gamma}\phi(\overline{\epsilon}(\chi)^\gamma)$ and the description of $Y_{r,{\rm tor}}$
given in Lemma~\ref{construction}(iii). This completes our proof of Theorem~\ref{mainres}.

\section{On $ K_3$ and the regulator}\label{k3 and the reg}

In preparation for describing some numerical evidence for Conjecture~\ref{chinlike}
we now make precise the relation between a version of
the (second) Bloch group and $ K_3 $ of a field, and, if the field is
a number field, the Beilinson regulator map. This result may itself be
of some independent interest and so, in order to keep open the
possibility of extending it to higher Bloch groups, we have used the
approach of~\cite{dJ95} rather than the potentially more precise
result of~\cite[Th.~5.2]{SusXkof}.

Let $ F $ be a field, and let $ K_3(F)^\ind $ be the quotient of $
K_3(F) $ by the image of the Milnor $K$-group $ K_3^M(F) $. We
recall that if $ F $ is a number field of signature $ [r_1,r_2] $
then $ K_3^M(F) \cong (\Z/2\Z)^{r_1} $ by~\cite[Th.~2.1(3)]{Ba-Ta} so
$K_3(F)^\ind_{\rm tf} = K_3(F)_{\rm tf}$ naturally, and this is a
free Abelian group of rank~$ r_2 $ by Remark \ref{useful remark}(iii). Finally, we
set
\begin{equation*}
\tilde\wedge^2 F^\times := F^\times \otimes F^\times / \langle (-x) \otimes x:  x \in
F^\times \rangle \,,
\end{equation*}
(this does not coincide with the usual exterior power $\wedge^2F^\times$
because of the negative sign in the denominator) and then write
$\delta_{2,F}$ for the homomorphism from the free Abelian group $
\Z[F^\times]$ on $ F^\times $ to $ \tilde\wedge^2 F^\times $ that is given by mapping a
generator $ \{x\} $ with $ x $ in $ F^\times $ to $ (1-x) \tilde\wedge\, x $ if $ x
\ne 1 $ and to $ 0 $ if $ x=1 $.

In the next proposition we refine various results in~\cite[\S2-\S5]{dJ95}.
Here we write $ D(z)$  for the
Bloch-Wigner dilogarithm $\C\setminus\{0\} \to \R(1)$ that is defined in \cite{bl00} by integrating the
function $ \log|w| \textup{d} i {\rm arg}(1-w) - \log|1-w| \textup{d} i {\rm arg}(w)
$ along any path from a point $ z_0\in \R\setminus \{0,1\} $ to $ z
$ (for $ z=1 $ one uses a limit).

\begin{thm}\label{Bprop}\hfill
\begin{itemize}
\item[(i)]
With notation as above, there is a homomorphism
\begin{equation*}
 \varphi_F : \ker(\delta_{2,F}) \to K_3(F)_{\rm tf}^\ind
\,,
\end{equation*}
which is natural up to sign.

\item[(ii)]
If $ F $ is a number field, then $ \varphi_F $ has finite cokernel.

\item[(iii)]
Moreover, there is a universal choice of sign, such that if $
F $ is any number field, and $ \sigma$ any embedding $F\to \C$, then
the composition
\begin{equation*}
 \ker(\delta_{2,F}) \buildrel{\varphi_F}\over{\to} K_3(F)_{\rm tf}^\ind
 = K_3(F)_{\rm tf} \buildrel{\sigma}\over{\to} K_3(\C)_{\rm tf}
\buildrel{\reg_2}\over{\to} \R(1)
\end{equation*}
is induced by mapping $ \{x\} $ to $ D(\sigma(x)) $.
\end{itemize}
\end{thm}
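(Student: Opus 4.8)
The plan is to obtain all three assertions by refining, in the case of weight two, the constructions of \cite[\S2--\S5]{dJ95}, the additional work being to carry out de Jeu's (rational) arguments integrally modulo torsion and to pin down the sign. Recall from the paragraph preceding the theorem that for a number field $F$ of signature $[r_1,r_2]$ one has $K_3(F)_{\rm tf}^\ind=K_3(F)_{\rm tf}\cong\Z^{r_2}$, so for number fields it does no harm to work with $K_3(F)_{\rm tf}$. For claim~(i), de Jeu's construction attaches to the two-term complex $\Z[F^\times]\xrightarrow{\delta_{2,F}}\tilde\wedge^2 F^\times$ (placed in degrees $1$ and $2$) a morphism to a complex computing $H_3(GL(F),\Z)$, hence a piece of $K_3(F)$, inducing on degree-one cohomology a homomorphism $\ker(\delta_{2,F})\to K_3(F)$ with image in the weight-two part. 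I would verify that the identifications of \cite[\S2--\S4]{dJ95}, needed there only rationally, remain valid over $\Z$ once one passes to torsion-free quotients, and then take $\varphi_F$ to be the composite $\ker(\delta_{2,F})\to K_3(F)\to K_3(F)_{\rm tf}=K_3(F)_{\rm tf}^\ind$. The one genuinely ambiguous ingredient is an orientation-type datum entering the comparison with group homology; fixing it coherently in $F$ makes $\varphi_F$ functorial, the opposite coherent choice replacing $\varphi_F$ by $-\varphi_F$, which is the ``natural up to sign'' clause.

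For claim~(iii) the decisive input is the computation of the Beilinson regulator on the indecomposable part of $K_3$, due to Bloch (see \cite{bl00}) and recast in terms of the above complex in \cite[\S5]{dJ95}: precomposing $\reg_2\colon K_3(\C)_{\rm tf}^\ind\to\R(1)$ with $\varphi_\C$ sends a class $\sum_z n_z\{z\}$ of $\ker(\delta_{2,\C})$ to $\pm\sum_z n_z D(z)$, with a definite sign depending only on the orientation datum fixed above. One normalises that datum so that this sign is $+1$; this is the universal choice of sign. For a number field $F$ and an embedding $\sigma\colon F\to\C$, functoriality of the construction in $\sigma$ gives $\sigma\circ\varphi_F=\varphi_\C\circ\sigma_\ast$ on $\ker(\delta_{2,F})$, where $\sigma_\ast\colon\Z[F^\times]\to\Z[\C^\times]$ is induced by $\sigma$, and hence $\reg_2(\sigma(\varphi_F(\sum_x n_x\{x\})))=\sum_x n_x D(\sigma(x))$, as asserted. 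The normalisation is consistent across the embeddings of a fixed $F$: replacing $\sigma$ by $\tau\circ\sigma$ multiplies both sides by $-1$, since $\reg_2\circ\tau=-\reg_2$ and $D(\bar z)=-D(z)$.

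Claim~(ii) then follows from~(iii) together with Borel's theorem. For a number field $F$, pick representatives $\sigma_1,\dots,\sigma_{r_2}$ of the conjugate pairs of complex embeddings of $F$. By~(iii) the composite of $\varphi_F$ with $\bigoplus_{a}(\reg_2\circ\sigma_a)$ sends $\sum_x n_x\{x\}$ to $(\sum_x n_x D(\sigma_a(x)))_a\in\bigoplus_{a=1}^{r_2}\R(1)$. On the one hand, by Borel's theorem \cite{Borel74} — in the form recalled in the proof of Lemma~\ref{ord=mult} — the map $\bigoplus_{a}(\reg_2\circ\sigma_a)$ is injective on the $r_2$-dimensional space $K_3(F)_{\rm tf}^\ind\otimes\R$, hence an isomorphism onto $\bigoplus_{a}\R(1)$; on the other hand, the weight-two case of Zagier's conjecture (a consequence of Borel's computation together with the work of Bloch and of Suslin), via the comparison of the dilogarithm with the Borel regulator, shows that the $D$-values of $\ker(\delta_{2,F})$ already span a full-rank lattice in $\bigoplus_{a}\R(1)$. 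It follows that $\varphi_F\otimes\R$ is surjective, so that $\varphi_F$ has finite cokernel.

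The step I expect to be the main obstacle is claim~(i): performing de Jeu's weight-two construction integrally modulo torsion while tracking the orientation datum, so that the only surviving ambiguity is a single global sign governed by a functorial choice rather than one varying uncontrollably with $F$. Once that is in place, (iii) is essentially bookkeeping on top of the known regulator formula for $K_3(\C)^\ind$, and (ii) is a formal consequence of (iii), Borel's theorem and the weight-two case of Zagier's conjecture.
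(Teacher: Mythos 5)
Your overall strategy — refine the weight-two constructions of~\cite{dJ95}, define $\varphi_F$ by sending a generator $\{x\}$ to a class in a Bloch-group-like object, then invoke the known formula for the regulator on indecomposable $K_3$ for~(iii), and combine this with Borel's theorem and finite generation for~(ii) — is essentially the paper's plan, and your treatment of~(ii) and~(iii) is a sound reshuffling of the argument in~\cite{dJ95} (the paper quotes directly \cite[Prop.~4.1]{dJ95} and \cite[Th.~5.3]{dJ95} where you unwind them in terms of Borel's theorem and the weight-two Zagier conjecture; the two are equivalent, and the paper supplements with Quillen's finite generation of $K_3(F)$).

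However, your description of the construction behind~(i) is incorrect: you assert that de Jeu's construction passes through ``a morphism to a complex computing $H_3(GL(F),\Z)$,'' i.e.\ group homology, but that is Suslin's route, which the paper explicitly declines to take (``we have used the approach of~\cite{dJ95} rather than the potentially more precise result of~\cite[Th.~5.2]{SusXkof}''). De Jeu — and the paper following him — instead build $\varphi_F$ out of relative $K$-theory of $\P^1_F$ punctured at $t=1$ and localized away from $\{t=u:u\in F^\flat\}$. Concretely, one starts from the isomorphism $K_n(X_R;\square)\cong K_{n+1}(R)$ for $X_R=\P^1_R\setminus\{t=1\}$, fabricates a universal element $[S]_2^\sim$ in $K_2(X_{\Z[S,S^{-1}]}\setminus\{t=S\};\square)$ (unique up to the exponent-$48$ torsion of $K_3(\Z)$), pulls it back along specialization $S\mapsto x$ and localization to get $[x]_2\in K_2(X_{F,\loc};\square)/\mathrm{im}(\I\otimes F^\times)$, and then shows via localization and relative exact sequences that the kernel of the induced map $d_2'\colon B_2'(F)\to\tilde\wedge^2F^\times$ injects into $K_3(F)^\ind/A$ with $A$ torsion. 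The sign ambiguity you gesture at is precisely the sign in the isomorphism $K_n(X_R;\square)\cong K_{n+1}(R)$, and the torsion in the cokernel of the injection comes from $\ker(d_{2|\NN})$ and $A$, both torsion by \cite[Lem.~3.7]{dJ95}. None of this is visible from the group-homology description, so if you were to flesh out your sketch you would find yourself re-deriving Suslin's construction rather than de Jeu's. The end result can be made to agree, but the two are genuinely different intermediate machines, and the paper's stated reason for choosing de Jeu's (extendability to higher Bloch groups) would be lost.
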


\begin{proof} At the outset we note that our set-up is essentially that of~\cite{bl00} with the various
modifications and improvements contained in~\cite{blo:ltd} and~\cite{dJ95} (but which are sometimes carried out only after
tensoring with $ \Q $). In particular, the reader may observe that
the construction of the element $ [x]_2' $ and the map to $
K_3(F)_{\rm tf}^\ind$ described below become those in~\cite[\S 3]{dJ95} after tensoring
with $ \Q $ and decomposing according to the Adams eigenspaces.

For any Noetherian regular ring $ R $ we let $ X_R = \P_R^1 \setminus \{t=1\} $, where
$ t $ is the standard affine coordinate on $ \P_R^1 $.  Then there is a long exact sequence of relative $ K $-groups
(terminating with $ K_0(\square) $),
\begin{equation*}
 \cdots \to K_n(X_R; \square) \to K_n(X_R ) \to K_n(\square) \to K_{n-1}(X_R; \square) \to \cdots
\end{equation*}
where $ \square $ consists of the subset of $ X_R $ where $
t=0,\infty $, i.e., two copies of $ \textup{Spec}(R) $. Since the
pullback $ K_n(R) \to K_n(X_R) $ along the natural map is an
isomorphism (by Quillen \cite[p.122]{qui73a}) and therefore the composition $
K_n(R) \to K_n(X_R) \to K_n(\square) \cong K_n(R)\oplus K_n(R)$ is
the diagonal map, we find an isomorphism
\begin{equation}\label{reliso}
 K_n(X_R;\square) \cong K_{n+1}(R)
\end{equation}
for $ n \ge0 $, with a map that is natural up to a universal sign.

We can combine localization with relativity under suitable assumptions (see~\cite[\S 2.2]{dJ95}).
In particular, if we take $  R = \Z[S,S^{-1}] $, then we have the exact sequence
(terminating with a map $ K_0(X_R; \square) \to 0 $)
\begin{equation*}
 \cdots \to K_2(X_R; \square) \to K_2(X_R \setminus \{t=S\}; \square)
\to K_1(R') \to K_1(X_R; \square) \to \cdots
\end{equation*}
with $ R' = \Z[ S,S^{-1},(1-S)^{-1} ] $.
Then $ K_1(R') \cong \langle -1, S, 1-S \rangle $, and
using~\cite[Lem.~3.14]{dJ95} one sees that the map $ K_1(R') \to K_1(X_R;\square) \cong K_2(R) $
maps $ (1-S) $ to $ \pm \{1-S,S\} = 0 $, so there
exists an element $ [S]_2^\sim $ in $K_2(X_R\setminus\{t=S\}; \square) $ with
image $ (1-S)^{-1} $ in $K_1(\Z[S,S^{-1}, (1-S)^{-1}]) $.  Because
$ K_2(X_R; \square) \cong K_3(R) \cong K_3(\Z) \oplus K_2(\Z) $ is torsion of
exponent~48~ (by Lee and Szczarba \cite{Le-Sz}) the element $ [S]_2^\sim $
is unique up to such torsion. We fix a choice of $[S]_2^\sim $ in what follows.

For the field $ F $, write $ F^\flat = F \setminus\{0,1\} $ as well
as $ X_{F,\loc} = X_F\setminus\{t=u \textup{ with } u \textup{ in }
 F^\flat \} $. By comparing the exact localization sequence
\begin{equation}\label{Iloc}
\cdots \to K_1(X_F;\square) \to K_1(X_{F,\loc};\square)  \to
\coprod_{u \in F^\flat} \Z \to K_0(X_F;\square) \to \cdots
\end{equation}
with the same one without relativity one sees as on~\cite[p.222]{dJ95} that
\begin{equation*}
 \I := K_1(X_{F,\loc} ; \square )
=
 \Bigl\{ f(t) = {\textstyle\prod}_{i=1}^n \left(\tfrac{t-a_i}{t-1}\right)^{n_i} \text{ with } f(0) = f(\infty) = 1 \Bigr\}
\end{equation*}
where $ n \ge 0 $, all $ n_i $ are in $ \Z $, and all $ a_i $ are in
$ F^\flat $. Tensoring the short exact sequence
\begin{equation*}
0 \to \I \to \coprod_{u \in F^\flat} \Z \to F^\times \to 0 \,,
\end{equation*}
which is part of~\eqref{Iloc}, with $ F^\times $ over $ \Z $, we obtain
the top row of the diagram

\begin{equation*}
\minCDarrowwidth1em
\begin{CD}
  0 @> >> \Tor_1^\Z(F^\times,F^\times) @> >> \I \otimes F^\times @> >> \coprod_{u \in F^\flat} F^\times @> >>  F^\times\otimes F^\times @> >> 0
\\
  @.  @V VV @V VV @\vert @V VV
\\
  0@> >>\frac{K_2(X_F;\square)}{\image(\coprod_{u\in F^\flat}K_2(F))} @> >> K_2(X_{F,\loc};\square)
                                @> d^\sim >>  \coprod_{u\in F^\flat} F^\times@> >> K_1(X_F;\square) @> >> 0.
\end{CD}
\end{equation*}

The bottom row is also obtained from~\eqref{Iloc}, using~\cite[Lem.~3.14]{dJ95} and the fact that $ K_2(F) $ is generated by
symbols $ \{a,b\} = a \cup b $. The second and last vertical maps
are induced by the cup product, thus giving rise to the first
vertical map which makes the diagram commutes.

Using~\cite[Lem.~3.14]{dJ95} yet again one sees that the image of $
\coprod_{u \in F^\flat} K_2(F) \to K_2(X_F; \square) $ under the
isomorphism $ K_2(X_F;\square) \cong K_3(F) $ in~\eqref{reliso}
equals the image of $ K_3^M(F) $ under the natural map to $ K_3(F)
$. So with $ K_3(F)^\ind = K_3(F)/\image(K_3^M(F)) $ and $ A $ the
image of $ \Tor_1^\Z(F^\times,F^\times) $ we obtain an exact sequence
\begin{equation*}
  0
\to
  K_3(F)^\ind/A
\to
  \frac{K_2(X_{F,\loc};\square)}{\image(\I\otimes F^\times)}
\buildrel{d}\over{\to}
  F^\times\otimes F^\times\to K_2(F)
\to
  0
\,,
\end{equation*}
which gives an isomorphism $ \ker(d) \cong K_3(F)^\ind/A $.

For $ x $ in $ F^\times $, let $ [x]_2^\sim $ be the image of $
x^*([S]_2^\sim) \in  K_2(X_F\setminus\{t=x\};\square) $ under the
localization $ K_2(X_F\setminus\{t=x\};\square) \to K_2( X_{F,\loc}
; \square ) $. Note that the boundary of $ [x]_2^\sim $ under $
d^\sim $ is $ (1-x)^{-1}_{|t=x} $  if $ x \ne 1 $ and is trivial if
$ x=1 $. Write $ [x]_2 $ for the image of $ [x]_2^\sim $ in $
K_2(X_{F,\loc};\square)/\image(\I\otimes F^\times)$, let $ B_2(F) $ be
the subgroup of $ K_2(X_{F,\loc};\square)/\image(\I\otimes F^\times)$
generated by the $ [x]_2 $ with $ x $ in $ F^\times $, and let $ d_2 $ be
the restriction of $ d $ to $ B_2(F) $.  We then have an inclusion
\begin{equation}\label{kerdmap}
 \ker(d_2) \to K_3(F)^\ind/A
\,.
\end{equation}

Now let $ \NN $ be the subgroup of $ B_2(F) $ generated by the
classes of $ [x]_2 + [1/x]_2 $ with $ x $ in $ F^\times $. Then we have a
commutative diagram
\begin{equation*}
\xymatrix{
\NN \ar[d] \ar[r]^-{d_{2|\NN}}   & d(\NN)      \ar[d]
\\
B_2(F) \ar[r]^-{d_2}    & F^\times \otimes F^\times \,. }
\end{equation*}
Next we note that $ d_2(\NN) = \langle (-x) \otimes x: x\in F^\times
\rangle \subseteq F^\times \otimes F^\times $ so $F^\times \otimes F^\times / d_2(\NN) =
\tilde{\wedge}^2F^\times$, and hence by taking quotients we get a short
exact sequence
\begin{equation}\label{sos}
0 \to \ker(d_{2|\NN}) \to \ker(d_2) \to \ker(d_2') \to 0
\end{equation}
where, setting $B'_2(F) := B_2(F)/\NN$, we write
\begin{equation*}
d_2' : B'_2(F) \to  \tilde{\wedge}^2F^\times
\end{equation*}
for the induced map. Writing $ [x]_2' $ for the image of $ [x]_2 $
in $ B'_2(F)$ we have that $ d_2'$ maps $[x]_2'$ to $ (1-x)^{-1} \tilde\wedge\,
x $ if $ x\ne1 $ and to $ 0 $ if $ x=1 $.

We know from~\cite[Lem.~3.7]{dJ95} that $ \ker(d_{2|\NN}) $ is torsion, so
combining~\eqref{sos} with the map~\eqref{kerdmap} and using
that $ A $ is torsion, we obtain a map
\begin{equation*}
\ker(d_2') \to K_3(F)^\ind_{\rm tf}
\end{equation*}
with torsion kernel.  We note that this map is independent of the
choice of $ [S]_2^\sim $, which was unique up to torsion. The map
$\varphi_F $ in Theorem~\ref{Bprop}(i) is now induced by mapping
$\{x\} $
  in $\ker(\delta_{2,F})\subseteq \Z[F^\times] $ to $ [x]_2' $ in
$\ker(d_2') \subseteq B'_2(F)$.

If $ F \subseteq \C $ then~\cite[Prop.~4.1]{dJ95} describes the
composition of the maps in $ \ker(d_2') \to K_3(F)_{\rm tf}^\ind \to
K_3(\C)_{\rm tf}^\ind $ with those in~\eqref{reg def} for $ r=-1 $,
namely
\begin{equation*}
K_3(\C)^\ind_{\rm tf} \to
 H_{\mathcal D}^1 (\textup{Spec}(\C), \R(2)) \cong \R(1)
\,.
\end{equation*}
That proposition states that the total map is induced by a map $
B_2'(F) \to \R(1) $ mapping $ [z]_2' $ to $ \pm D(z) $, with the
(universal) sign depending on the choice of the sign in the
isomorphism~\eqref{reliso}. (Note that in loc.\ cit.\ this was
expressed using the function $ P_2 (z) = D(z)/i $.) Claim~(iii) of
the proposition then follows because the construction of $ B_2'(F) $
and the map $ \varphi_F $ is natural, so that
\begin{equation*}
\xymatrix{ \ker(\delta_{2,F}) \ar[r] \ar[d] & B'_2(F) \ar[r] \ar[d]
& K_3(F)^\ind_{\rm tf}  \ar[d]
\\
\ker(\delta_{2,\C}) \ar[r] & B'_2(\C) \ar[r] & K_3(\C)^\ind_{\rm tf}
}
\end{equation*}
commutes, where the vertical maps are induced by the embedding
$ \sigma: F \to \C $. Finally, claim ~(ii) of the proposition follows from the
facts that, as $ F $ is a number field, after tensoring with $ \Q $,
$ \varphi_F $ induces an isomorphism (by \cite[Th.~5.3]{dJ95}) and $
K_3(F)_{\rm tf}$ is finitely generated (by Quillen \cite{qui73b}).
\end{proof}

\section{Numerical evidence}\label{ne}

In this section we provide corroborating numerical evidence for
Conjecture~\ref{chinlike} in the case that $r=-1$, $k = \Q$ and $S= S_\infty$
(so that Theorem~\ref{mainres} does not apply). For other interesting numerical work that is related to Conjecture \ref{chinlike} but 
uses elements in the $K$-group tensored with the rationals
see the articles of Besser, Buckingham, Roblot and the second author \cite[\S7]{BBdJR} and of Zagier and the third author \cite[\S5]{GZ}.

In the sequel we will use the symbol `$\approx$' to indicate a numerical identity that we have
checked to hold to many (and in all cases at least one hundred) decimal places.

We shall give examples of Galois extensions $F/\Q$ with
an embedding $\sigma:F\to\C$, and an irreducible complex character
$\chi$ of $G_{F/\Q}$ with $\chi(1)-\chi(\tau_\sigma)=2$,
so that $L(s,\check\chi)$ vanishes to order one at $s=-1$
by Remark~\ref{chin=2}, $ \chic{\chi}{\sigma,\sigma}=2 $ and $\embsp{-1,\chi}{F}=\emb{F}$.
Let $ \Q(\chi) \subset \C$ be the character field of $\chi$ with ring of integers $ \CO $.
In each case, we found elements $ \xi $ in $ \ker(\delta_{2,F}) $
and used the resulting $ \varphi_F(\xi) $ in $ K_3(F)_{\rm tf} $
to construct an element $ \beta $ of
$ \CO\otimes K_3(F)_{\rm tf} $ that satisfies $ e_{\sigma,\chi}\beta=\beta $,
as well as, for all $ \gamma $ in $ G_{\Q(\chi)/\Q} $,
\begin{equation*}
 (2\pi i)^{-1} \reg_{2,\sigma}(\beta^\gamma) \approx \gamma(e) L'(-1,\check\chi^\gamma)
\end{equation*}
for some $ e $ in $ E^\times $.
Using Remark~\ref{QabFremark} it is easy to check that $w_2(F^{{\rm ker}(\chi)})=24$
in all of our examples.  In each case we can write $ ef = \chic{\chi}{\sigma,\sigma} w_{-1}(\chi) = 2 \cdot 24^{\chi(1)}$
for some $ f $ in $ \CO $, so that $ \beta_\sigma(\chi,S_\infty) = f\beta $
satisfies the requirements of Proposition~\ref{equivalences}(ii),
thus verifying Conjecture~\ref{chinlike} for $ \chi $ and $ \sigma $.
In fact, in view of Lemma~\ref{chispecial}, this verifies Conjecture~\ref{chinlike} for all characters in $\{\chi^\gamma: \gamma \in \Gamma\}$ and for all $\sigma$ in $\emb{F}$.

We used Theorem~\ref{Bprop}(iii) and GP-PARI \cite{Pari} in order to compute
$\reg_{2,\sigma}(\beta^\gamma)$, and the latest version of MAGMA \cite{Magma} for the computation of
$L'(-1,\check\chi^\gamma)$ (but for our original experiments some of these values were provided by A.~Booker and
X.-F.~Roblot).
\\
The elements in $\ker(\delta_{2,F})$ were found, using
GP-PARI, roughly as follows. Let $T$ be a finite set of places of $\Q$ including the archimedean place. Then, in a first step, we produce
exceptional $T$-units $x_j\in\CO_{F,T}^\times$ (recall that $x_j$ is said to be `exceptional' if also $1-x_j\in \CO_{F,T}^\times$),
and we decompose both $x_j$ and $1-x_j$ with respect to a chosen set $\CB$ of
fundamental $T$-units (as provided by GP-PARI). The second step is then to look for linear relations among the elements
$(1-x_j)\tilde\wedge\, x_j$ of $\tilde\wedge{}^{\hskip -2pt 2}\langle\CB\rangle$, the latter
constituting a finite rank submodule of $\tilde\wedge{}^{\hskip -2pt 2}F^\times$.
(For more details on the underlying algorithm and implementation, we refer to work in preparation \cite{Kculator} and to the third author's personal web page.) 
A theorem of Bloch and Suslin, combined with the explicit formula for the rank of $K$-groups given in Remark \ref{useful remark}(iii), guarantees that, for sufficiently large
$T$, there are $\Z$-linear combinations $\xi$ of elements $\{x_j\}$ as above with $\xi \in \ker(\delta_{2,F})$
such that the $\varphi_F(\xi)$ generate a subgroup of finite index in $K_3(F)_{\rm tf}$
(cf.\ Theorem~\ref{Bprop}).
\\
For each natural number $m$ we now set $\zeta_m:=\exp(2\pi i/m)$
and $ \eta_m:=\zeta_m+\zeta_m^{-1} $.

\subsection{Dihedral representations}
Let $F$ be a Galois extension of $\Q$ with $G_{F/\Q}$ isomorphic to the dihedral group $D_n$
for an odd integer $n$.  If $F$ is not totally real, then the element $\tau_\sigma$ in
Conjecture~\ref{chinlike} has order $2$ and so for any irreducible $2$-dimensional character $\chi$
of $G$ one has $\chi(1)-\chi(\tau_\sigma) = 2$.

\subsubsection{}\label{dr}
Let $F=\Q(a)$ denote the totally complex
field of discriminant $-47^5$ that is the splitting field over $\Q$ of
the irreducible polynomial $x^{10} - 5x^9 + 12x^8 - 18x^7 + 20x^6 - 18x^5 + 10x^4 - x^3 - 2x^2 + x + 1$
in $\Q[x]$, of which $a$ is a root.
This field is such that $G=G_{F/\Q} \cong D_5$
is generated by an element $s$ of order~5 with
$s(a)= \frac15(3 a^9 - 12 a^8 + 25 a^7 - 34 a^6 + 38 a^5 - 33 a^4 + 16 a^3 - 7 a^2 + 2 a + 3)$
and an element $t$ of order~2 with $t(a)=1-a$.
The irreducible $2$-dimensional characters of $G_{F/\Q}$
take values in $ \Q(\eta_5)=\Q(\sqrt{5})$, and
form an orbit $\{\chi_1,\chi_2\}$ under the action of $\Gamma := G_{\Q(\sqrt5)/\Q}$, where
$\chi_j(s)=\zeta_5^j+\zeta_5^{-j}$ for $j \in \{1,2\}$.
MAGMA gives
\begin{equation*}
L'(-1,\check\chi_1) \approx-1.2094\ldots\,,\qquad L'(-1,\check\chi_2) \approx   -0.91109\ldots
\,.
\end{equation*}
The elements $u:=\frac15(-a^8 + 3 a^7 - 5 a^6 + 6 a^5 - 9 a^4 + 7 a^3 - 2 a^2 + 4 a + 3)$
and $u'=-{(1-u)^2}/{u}$
are exceptional units in $\CO_F$ and we found that $\xi:=2(\{u\}+\{u'\})$ lies in $\ker(\delta_{2,F})$.
We fix $\sigma$ with $\sigma(a)=1.367\ldots+0.197\ldots\cdot i$,
and let
$\beta=10e_{\sigma,\chi_1}\varphi_F(\xi)=\sum_{g\in G}\check\chi_1(g)(1-\tau_\sigma)g\varphi_F(\xi)$
in $\CO\otimes K_3(F)_{\rm tf}$, so that $e_{\sigma,\chi_1}\beta=\beta$.
We then find, for both $\gamma$ in $\Gamma$, that
\begin{equation}\label{D5eq}
(2\pi i)^{-1} \reg_{2,\sigma}(\beta^\gamma) \approx
\gamma(e) L'(-1,\check\chi_1^\gamma)
\end{equation}
for $e= \sqrt{5}$ in $E^\times$, so that $e$ does not divide
$\chic{\chi}{\sigma,\sigma}w_{-1}(\chi_1) = 2\cdot24^2$ in $\CO$.
We find a different $\beta$ as follows.
If $g$ in $G$ has $ \check\chi_1(g) \ne 0 $, then
$ (\check\chi_1(g)-2)/(\eta_5-2) $ is in $ \CO $, so we
can write $ 5e_{\chi_1}=(\eta_5-2)z+2z'$ with $z$ and $z'=\sum_{i=0}^{i=4}s^i$ in $ \CO[G] $.
If $\beta'$ is in $K_3(F)_{\rm tf}$, then $(1+t)z\beta'$ is in $\CO\otimes K_3(F)_{\rm tf}$
and $(\eta_5-2)(1+t)z\beta'=5e_{\chi_1} (1+t)\beta'$
because $(1+t)z'\beta'$ lies in $K_3(F)_{\rm tf}^G=\{0\}$.
Hence $\beta=(1-\tau_\sigma)(1+t)z\varphi_F(\xi)$ in $\CO\otimes K_3(F)_{\rm tf}$
also satisfies $e_{\sigma,\chi_1}\beta=\beta$.  For this choice of $\beta$
we find~\eqref{D5eq} is satisfied for both $\gamma$
with $e=-2-\sqrt{5}$,
so we may take $\beta_\sigma(\chi_1,S_\infty)=f\beta$ with
$f=(4-2\sqrt{5}) 24^2$ in~$\CO$.

\subsubsection{} Now let $F=\Q(a)$ denote the totally complex
field of discriminant $-71^7$ that is the splitting field over $\Q$ of
the irreducible polynomial
$x^{14} - 4x^{12} - x^{11} + 5x^{10} + 6x^9 + 16x^8 + 25x^7 + 16x^6 + 6x^5 + 5x^4 - x^3 -4x^2 + 1$
in $\Q[x]$, of which $a$ is a root.
This field is such that $G=G_{F/\Q}\cong D_7$ is generated by an element $s$ of order~7 with
$s(a)=\tfrac{1}{1139}
(1685a^{13}-343a^{12}-6831a^{11}-12a^{10}+8661a^9+7815a^8+25608a^7+36171a^6+17365a^5+6516a^4+6531a^3-4219a^2-5336a+1683)$
and an element $t$ of order~2 with $t(a)=a^{-1}$.
The irreducible $2$-dimensional characters of $G_{F/\Q}$
take values in $\Q(\eta_7)$, where $ \eta_7^3+\eta_7^2-2\eta_7-1=0 $,
and form an orbit $\{\chi_1,\chi_2,\chi_3\}$ under the action of $\Gamma := G_{\Q(\eta_7)/\Q}$, where
$\chi_j(s)=\zeta_7^j+\zeta_7^{-j}$ for $j \in \{1,2,3\}$.
MAGMA gives
\begin{equation*}
L'(-1,\check\chi_1) \approx -2.6049\ldots,\,\,\, L'(-1,\check\chi_2) \approx -2.1887\ldots,\,\,\, L'(-1,\check\chi_3) \approx -1.5689\ldots
\,.
\end{equation*}
Just as in \S\ref{dr} we found an element $\xi=\sum_{j\in J}n_j\{x_j\}$ in $\ker(\delta_{2,F})$
(with $|J|=14$, all $x_j$ exceptional units in $\CO_F$, one coefficient
$n_j$ equal to $-4$ and all thirteen others equal to ~$\pm2$). We fix $\sigma$ with $\sigma(a)= 0.450\ldots +0.163\ldots\cdot i$
and let $\beta=14e_{\sigma,\chi_1}\varphi_F(\xi)= \sum_{g\in G} \check\chi_1(g) (1-\tau_\sigma) g\varphi_F(\xi)$ in
$\CO\otimes K_3(F)_{\rm tf}$, so that $e_{\sigma,\chi_1}\beta=\beta$.
We then find, for all three $\gamma$ in $\Gamma$, that
\begin{equation}\label{D7eq}
(2\pi i)^{-1} \reg_{2,\sigma}(\beta^\gamma) \approx
\gamma(e) L'(-1,\check\chi_1^\gamma)
\end{equation}
for $e= 2\eta_7^2-\eta_7+1$ in $E^\times$, which has norm $ 49 $,
so that again $e$ does not divide
$\chic{\chi}{\sigma,\sigma}w_{-1}(\chi_1) = 2\cdot24^2$ in $\CO$.
 To find a more suitable element we write $ 7e_{\chi_1}=(\eta_7-2)z+2z'$ with $z$ and $z'=\sum_{i=0}^{i=6}s^i$ in $ \CO[G] $,
and one sees as before that
$\beta=(1-\tau_\sigma)(1+t)z\varphi_F(\xi)$ in $\CO\otimes K_3(F)_{\rm tf}$ is
fixed by $e_{\sigma,\chi_1}$.
For this $\beta$ we find that~\eqref{D7eq} is satisfied for all three $\gamma$
with $e=-3\eta_7^2-3\eta_7-1$, which has norm $ -13 $.
We may then take
\begin{equation*}
 \beta = \left( (-3\eta_7-3\eta_7^2) 14e_{\sigma,\chi_1} + (2-3\eta_7^2)  (1-\tau_\sigma)(1+t)z \right) \varphi_F(\xi)
\,,
\end{equation*}
which also satisfies $e_{\sigma,\chi_1}\beta=\beta$, and for
which~\eqref{D7eq} holds for all three $ \gamma $ with $ e=1 $,
and $\beta_\sigma(\chi_1,S_\infty) = 2\cdot24^2 \beta$.

\subsection{A tetrahedral representation}
Let $F$ denote the Galois closure of the field $F'=\Q(\theta)$ with $\theta^4=1-\theta$. Then $F$ has discriminant $283^{12}$ and
$G_{F/\Q}$ is isomorphic to the symmetric group $S_4$. In fact, since the polynomial $x^4+x-1$ has precisely two real roots, under the natural identification of $G_{F/\Q}$ with $S_4$ the element $\tau_\sigma$ corresponds to a transposition. For the (rational valued, $3$-dimensional) tetrahedral character $\chi_3$ of $G_{F/\Q}$ one therefore has
$\chi_3(1)-\chi_3(\tau_\sigma) = 2$.
In this case MAGMA gives $\,L'(-1,\check\chi_3) \approx 0.62475 \ldots\,\,$.
Note that the tetrahedral representation
occurs with multiplicity one inside $\Q\otimes K_3(F)$ by Lemma~\ref{ord=mult}
and that it has a unique one-dimensional $G_{F/F'}$-invariant subspace~$V$.
Because $K_3(F')_{\rm tf}$ has rank~1 and is of finite index in $K_3(F)_{\rm tf}^{G_{F/F'}}$,
both groups lie in~$V$ and $e_{\chi_3}$ acts on them as the identity.
This applies, in particular, to $\varphi_F(\{\theta\}) = \varphi_{F'}(\{\theta\})$.
If $\sigma$ is in $\emb{F}$ then we let $\beta=(1-\tau_\sigma)\varphi_F(\{\theta\})$
in $K_3(F)_{\rm tf}$, so that
$e_{\sigma,\chi_3}\beta=\beta$ and $\reg_{2,\sigma}(\beta)=2\reg_{2,\sigma}(\varphi_F(\{\theta\}))$.
For any $\sigma$ with $\sigma(\theta)=0.248\ldots+1.033\ldots\cdot i$ we find
\begin{equation*}
(2\pi i )^{-1} \reg_{2,\sigma}(\beta) \approx \frac12 L'(-1,\check\chi_3)
\,,
\end{equation*}
so we can take $\beta_\sigma(\chi,S_\infty) = 4 \cdot 24^3 \beta $.

\begin{remark}
In this case there is another rational valued, $3$-dimensional character $\chi_3'$ of $G_{F/\Q}$ that is obtained by multiplying $\chi_3$ by the alternating character. However, the function $L(s,\check\chi_3')$
vanishes to order $2$ at $s=-1$ and so Conjecture~\ref{chinlike} is trivially satisfied.
In the spirit of Proposition~\ref{firstver}, we investigated the value of the second derivative of $L(s,\check\chi_3')$ at $s=-1$. One has $\,L''(-1,\check\chi_3') \approx -10541.7335\ldots$. Further, with $F''$
denoting the fixed field of $F$ under some element of $G_{F/\Q}$ of order 4, we found two elements $\xi_1$, $\xi_2$ of
$\ker(\delta_{2,F''})$ (each being a linear combination of about one hundred terms), together with two embeddings
$\sigma_1$, $\sigma_2$ in $\emb{F}$, such that
\[
(2\pi i)^{-2} {\rm det}\bigl(\big(
{\reg}_{2,\sigma_a}(\varphi_F(\xi_b)\big)_{1\le a,b\le 2}\bigr)
 \approx \frac{1}{4} L''(-1,\check\chi_3')
\,.
\]
\end{remark}

\subsection{The Tate-Buhler-Chinburg representations}
As a final example we considered one of the fields $F$ of degree 48 over $\Q$ studied by Tate and Buhler~\cite{buhler}
and Chinburg~\cite[\S III.A.]{chin}. This field has discriminant $7^{24}19^{32}$ and $G:=G_{F/\Q}$ is isomorphic
to the amalgamated product
$ (\SL \times \Z/4\Z)/\langle ( ({\scriptstyle{\ol2\atop\ol0}{\ol0\atop\ol2}}), \ol2)\rangle $,
denoted $\amG$. This group has precisely six irreducible two-dimensional characters:
$ \chi_1, \check\chi_1 $ (corresponding to the representations
denoted $\sigma$ and $\Bar{\sigma}$ in~\cite{chin}, which each have character field $\Q(i)$)
and $ \chi_2,\check\chi_2,\chi_2', \check\chi_2' $
(corresponding to the representations $\rho,\Bar{\rho},\rho'$ and $\Bar{\rho}'$,
which each have character field $\Q(\zeta_{12})$).
Since for any $\sigma$ in $ \emb{F} $ the element $\tau_\sigma$ is
a non-central element of order~2, hence belongs to the conjugacy class
$A_0A_3$ in the character table \cite[Table I]{chin}, we see
that $\chi(1)-\chi(\tau_\sigma) = 2$ for every such character~$\chi$.

We now describe $ F $ and our identification of $ G $ with $ \amG $.
Note that $ P\GL \simeq S_4 $ by means of its action on $ \mathbb{P}_{\mathbb{F}_3}^1 $,
which also gives $ P\SL \simeq A_4 $.  As $ \SL $ has a unique
element of order~2 one sees easily that it is generated by any two non-commuting elements
$ \lambda_1 $ and $ \lambda_2 $ of order~3.
The disjoint conjugacy classes
of $ ({\scriptstyle{\ol1\atop\ol0}{\ol1\atop\ol1}}) $ and its
inverse contain all  eight elements of order~3,
so we can take the $ \lambda_i $ in the same conjugacy class.  Then
$ \amG $ is generated by $ (\lambda_1,\ol0) $, $ (\lambda_2,\ol0) $,
and either element of order~4 in its centre, and has at most~48 automorphisms.
 In fact all~48 automorphisms can be obtained by letting
$ P\GL $ act on $ \SL $ by conjugation and $ \{\pm 1\} $ act on $ \Z/4\Z $ by multiplication.
We may therefore identify $ G $ with $ \amG $ by specifying
two distinct conjugate elements $ g_1, g_2 $ of order~3 as well as a central element $ h $ of
order~4, and letting them correspond to
$ ( ({\scriptstyle{\ol1\atop\ol0}{\ol1\atop\ol1}}), \ol0) $,
$ ( ({\scriptstyle{\ol0\atop\ol2}{\ol1\atop\ol2}}), \ol0) $
and
$ ( ({\scriptstyle{\ol1\atop\ol0}{\ol0\atop\ol1}}), \ol1) $ respectively.

We note $ F $ is the splitting field of the
irreducible polynomial
$ f(x) = x^{16}-x^{15}+4x^{14}+x^{13}+2x^{12}+2x^{11}-9x^{10}+3x^9+19x^8+23x^7+13x^6+x^5+2x^4+3x^3+3x^2+3x+1 $
in $ \Q[x] $, so
$ F= \Q(a)(b) $ where $ f(a)=0 $ and $ b $ is a root of the factor
\begin{equation*}
 g(x) =
x^3 -(\tfrac{21430423}{9679918}a^{15}+\cdots+\tfrac{27880963}{4839959})x^2 +
\cdots+
(\tfrac{10585549}{4839959}a^{15} +\cdots + \tfrac{43460997}{9679918})
\end{equation*}
of $ f(x) $ in $ \Q(a)[x] $.
Let $ \sigma $ be the embedding of $F$ with
$ \sigma(a)= 1.254\ldots+0.583\ldots\cdot i $
and
$ \sigma(b)= 0.849\ldots-1.939\ldots\cdot i$, one of the roots of
$g^\sigma(x)=x^3-(0.222\ldots-2.544\ldots\cdot i)x^2-(1.658\ldots-1.108\ldots\cdot i)x-(0.827\cdots+0.252\ldots\cdot i)$.
We take distinct conjugate elements $ g_1, g_2 $ of order three
with $ g_1(a) = a $, $ \sigma(g_1(b)) = -0.612\ldots + 0.058\ldots\cdot i $,
$ \sigma(g_2(a)) = -0.612\ldots - 0.058\ldots\cdot i $,
$ \sigma(g_2(b)) =  1.254\ldots - 0.583\ldots\cdot i $,
and $ h $ the element of order four in the centre given by
$ \sigma(h(a)) = 0.586\ldots - 0.409\ldots\cdot i $
and $ \sigma(h(b)) =  -0.612\ldots - 0.058\ldots\cdot i $.
%
%
The field $ F $ is computationally difficult but fortunately we were able to find elements in
$ K_3(F)_{\rm tf} $ by searching in $ \ker(\delta_{2,F'})\subset\ker(\delta_{2,F}) $ for $ F'=\Q(c) $
where $ \sigma(c) =  1.472\ldots + 0.900\ldots\cdot i $.
(This field $ F' $ is one of six conjugate subfields
of degree~24 over $ \Q $, the only other subfield of this degree
being the fixed field of the normal subgroup $\{1,h^2\}$.)

The characters $ \chi_1 $ and $ \check\chi_1 $
form an orbit under the action of $G_{\Q(i)/\Q}$.  MAGMA gives
$ L'(-1,\check\chi_1) \approx -64.577\ldots + 631.991\ldots\cdot i $,
and its complex conjugate for $ L'(-1,\chi_1) $.
%
%
We found that, for both $ \gamma $ in $ G_{\Q(i)/\Q} $,
\begin{equation*}
 (2\pi i)^{-1} \reg_\sigma (\beta^\gamma) \approx 12 L'(-1, \check\chi_1^\gamma)
\end{equation*}
with $ \beta = \sum_{g\in G}\check\chi_1(g) (1-\tau_\sigma) g \varphi_F(\xi) $
for some $ \xi $ in $ \ker(\delta_{2,F'}) $,
so that $e_{\sigma,\chi_1}\beta=\beta$.
We may therefore take $\beta_\sigma(\chi_1,S_\infty)=f\beta$ with
$f=96$ in~$\CO=\Z[i]$.

The remaining characters $\chi_2,\check\chi_2,\chi_2'$ and $\check\chi_2'$ form an orbit under the action of
$G_{\Q(\zeta_{12})/\Q}$.  The corresponding $L$-values are
$L'(-1,\check\chi_2) \approx  -2.5823\ldots + 4.4538\ldots\cdot i $
and $L'(-1,\check\chi_2') \approx  -3.1252\ldots + 4.8866\ldots\cdot i$,
%
%
as well as their complex conjugates $L'(-1,\chi_2)$ and
$L'(-1,\chi_2')$. In this case we found that, for all $ \gamma $
in $ G_{\Q(\zeta_{12})/\Q} $,
\begin{equation*}
 (2\pi i)^{-1} \reg_\sigma (\beta^\gamma) \approx 12 L'(-1, \check\chi_2^\gamma)
\end{equation*}
with $ \beta = \sum_{g\in G}\check\chi_2(g) (1-\tau_\sigma) g \varphi_F(\xi) $ for
the same $ \xi $ in $ \ker(\delta_{2,F'}) $, so
that $e_{\sigma,\chi_2}\beta=\beta$.
We may therefore take $\beta_\sigma(\chi_2,S_\infty)=f\beta$ with
$f=96$ in~$\CO=\Z[\zeta_{12}]$.

\end{document}